\newcommand{\Zz}{\mathbb{Z}}
\newcommand{\Cz}{\mathbb{C}}
\newcommand{\Rz}{\mathbb{R}}
\newcommand{\Qz}{\mathbb{Q}}
\newcommand{\hyper}[4]{\left(\begin{array}{c} #1 \\ #2 \end{array}; #3, #4\right)}
\newcommand{\sgn}{{\rm sgn}}
\numberwithin{equation}{section}
\theoremstyle{plain}
\newtheorem{thm}{Theorem}
\newtheorem{defi}{Definition}
\newtheorem{prop}{Proposition}
\newtheorem{lem}{Lemma}
\newtheorem{coro}{Corollary}
\newtheorem{rmk}{Remark}
\begin{document}
\title{A generalization of Zwegers' multivariable $\mu$-function}
\author{G. Shibukawa and S. Tsuchimi}
\date{}

\maketitle

\begin{abstract}
We introduce a one parameter deformation of Zwegers' multivariable $\mu$-function by applying iterations of the $q$-Borel summation method, which is also a multivariate analogue of the generalized $\mu$-function introduced by the authors. 
For this deformed multivariable $\mu$-function, we give  some formulas, for example, forward shift formula, translation and $\mathfrak{S}_{N+1}$-symmetry. 
Further we mention modular formulas for the Zwegers' original multivariable $\mu$-function. 

\end{abstract}

\section{Introduction}
Throughout this paper, let $\tau\in\Cz$ be a complex number with ${\rm Im}(\tau)>0$, $q:=e^{2\pi i\tau}$, $\zeta_N:=e^{\frac{2\pi i}{N}}$ and $\delta_{jk}$ be the Kronecker delta $\delta_{jk}:=\begin{cases}0 &j\neq k\\ 1& j=k\end{cases}$. 
Further we put $a:=q^\alpha, x_j:=e^{2\pi iu_j}, X_N:=x_0\cdots x_N$ and 
\begin{align*}
\widehat{S}:=\left[1+\delta_{jk}\right]_{j,k=1}^{N-1}
  =
  \begin{bmatrix}
  2 & 1 & \cdots & 1 \\
  1 & 2 & \cdots & 1 \\
   \vdots  &  \vdots  & \ddots & \vdots   \\
  1 & 1 & \cdots & 2
  \end{bmatrix}.
\end{align*} 
We define the $q$-shifted factorials, theta functions and Dedekind eta function as follows:
\begin{align*}
    (x)_\infty&=(x;q)_\infty:=\prod_{j=0}^\infty(1-x q^j),\quad (x)_\alpha=(x;q)_\alpha:=\frac{(x;q)_\infty}{(q^\alpha x;q)_\infty}, \ (\alpha\in\Cz), \\
    \vartheta(u)&=\vartheta(u;\tau):=\sum_{\nu\in\Zz+\frac{1}{2}}e^{2\pi i\nu(u+\frac{1}{2})}q^\frac{\nu^2}{2}=-ie^{-\pi iu}q^\frac{1}{8}(q,e^{2\pi iu},e^{-2\pi iu}q)_\infty, \\
    \theta_q(x)&:=\sum_{n\in\Zz}x^nq^\frac{n(n-1)}{2}=(q,-x,-q/x)_\infty, \quad \eta(\tau):=q^\frac{1}{24}(q)_\infty, \\
    \theta_S({\bf{u}};\tau)&=\theta_S(u_1,\ldots,u_{N};\tau):=\sum_{n\in\Zz^{N}}\exp(\pi i{}^t\! nS n\tau+2\pi i({\bf{u}},n)), \\
    \widetilde{\theta}_S({\bf{u}};\tau)&=\widetilde{\theta}_S(u_1,\ldots,u_N;\tau):=\sum_{n\in\Zz^{N}}\exp\left(\pi i{}^t\! nS^{-1} n\tau+2\pi i{}^t\! {\bf{u}}S^{-1}n\right), 
\end{align*}
where $S$ be an $N\times N$ positive definite symmetric matrix and ${\bf{u}}:=[u_j]_{j=1}^N\in\Cz^N$. 
Also, for appropriate complex numbers $a_1,\ldots,a_r,b_1,\ldots,b_s,x$, we define the $q$-hypergeometric series by
\begin{align*}
{}_r\phi_s\hyper{a_1,\ldots,a_r}{b_1,\ldots,b_s}{q}{x}={}_r\phi_s\hyper{\{a_j\}_{1\leq j\leq r}}{\{b_k\}_{1\leq k\leq s}}{q}{x}:=\sum_{n=0}^\infty\frac{(\{a_j\}_{1\leq j\leq r})_n}{(\{b_k\}_{1\leq k\leq s})_n}\left((-1)^nq^\frac{n(n-1)}{2}\right)^{s-r+1}\frac{x^n}{(q)_n}, 
\end{align*}
where $$(a_1,\ldots,a_r)_\alpha=(\{a_j\}_{1\leq j\leq r})_\alpha:=(a_r)_\alpha\cdots(a_r)_\alpha,\quad \alpha\in\Cz\cup\{\infty\}. $$
Finally, for an appropriate function $g(x)$, we define the $q$-shift operator $T_x$ by $T_xg(x):=g(xq)$. 

It is well known that theta functions and the eta function satisfy the following relations (for example, see \cite{M}, \cite{S}): 
\begin{align}
\label{theta relation 1}
  \vartheta(u+1)
  &=-\vartheta(u), \\
\label{theta relation 2} 
  \vartheta(u+\tau)&
  =-e^{-\pi i\tau-2\pi iu}\vartheta(u), \\
\label{theta relation 3} 
  \vartheta(u;\tau+1)
  &=e^\frac{\pi i}{4}\vartheta(u;\tau), \\
\label{theta relation 4}
  \vartheta\left(\frac{u}{\tau};-\frac{1}{\tau}\right)
  &=-i\sqrt{-i\tau}e^\frac{\pi iu^2}{\tau}\vartheta(u;\tau), \\
\label{theta relation 5} 
  \theta_q(x)
  &=x\theta_q(x q), \\
\label{theta relation 6} 
  \theta_q(x)
  &=x\theta_q(x^{-1}), \\
\label{eta relation 1}
  \eta(\tau+1)
  &=e^\frac{\pi i}{12}\eta(\tau), \\
\label{eta relation 2}
  \eta\left(-\frac{1}{\tau}\right)
  &=\sqrt{-i\tau}\eta(\tau), 
  \\
\label{mul theta relation 4}
  \theta_S\left(\frac{u_1}{\tau},\ldots,\frac{u_N}{\tau};-\frac{1}{\tau}\right)
  &=
  \frac{(-i\tau)^\frac{N}{2}e^{\frac{\pi i}{\tau} {}^t\! {\bf{u}} S^{-1} {\bf{u}}}}{\sqrt{\det S}}
  \widetilde{\theta}_S(u_1,\ldots,u_N;\tau)
\end{align}
%

Zwegers introduced the following function 
($\mu$-function), 
in Chapter 1 of \cite{Zw}:
\begin{align*}
  \mu(u,v;\tau)
  =
  \frac{e^{\pi iu}}{\vartheta(v)}
  \sum_{n\in\Zz}
      \frac{(-1)^ne^{2\pi inv}q^\frac{n(n+1)}{2}}{1-e^{2\pi iu}q^n}.
\end{align*}
The $\mu$-function is a generalization of Ramanujan's mock theta functions, so it is an important function for the study of mock theta functions (see \cite{BFOR}). 

For the $\mu$-function, various interesting formulas hold such as the following:
\begin{align}
\label{mu relation 1}
    \mu(u+1,v)&=\mu(u,v+1)=-\mu(u,v),
    \\ 
\label{mu relation 2}
    \mu(u+\tau,v)&=-e^{2\pi i(u-v)}q^\frac{1}{2}\mu(u,v)-ie^{\pi i(u-v)}q^\frac{3}{8},
    \\
\label{mu relation 3}
    \mu(u+z,v+z)&=\mu(u,v)+\frac{i\eta(\tau)^3\vartheta(u+v+z)\vartheta(z)}{\vartheta(u+z)\vartheta(v+z)\vartheta(u)\vartheta(v)},
    \\
\label{mu relation 4}
    \mu ( u , v )
    &=
    \mu(u + \tau , v + \tau ) 
    = 
    \mu ( -u , -v ) 
    = 
    \mu ( v , u ),
    \\
\label{mu relation 5}
    \mu (u,v;\tau+1)
    &=
    e^{-\frac{\pi i}{4}}
    \mu ( u ,v;\tau),
    \\
\label{mu relation 6}
    \frac{e^{\pi i(u-v)^2/\tau}}{\sqrt{-i\tau}}\mu\left(\frac{u}{\tau},\frac{v}{\tau};-\frac{1}{\tau}\right)
    &=
    -\mu(u,v;\tau)+\frac{1}{2i}h(u-v;\tau),
\end{align}
where the function $h(u;\tau)$ is defined by the following Mordell integral: 
\begin{align}
\label{defi: h}
h(u;\tau)&:=\int_{-\infty}^\infty\frac{e^{\pi ix^2\tau-2\pi xu}}{\cosh(\pi x)}dx. 
\end{align} 
Furthermore, the $\mu$-function satisfies a transformation law like Jacobi forms by adding an appropriate non-holomorphic function to the $\mu$-function (modular completion): 
\begin{align}
  \widetilde{\mu}(u+1,v;\tau)&=\widetilde{\mu}(u,v+1;\tau)=-\widetilde{\mu}(u,v;\tau), 
  \\
  \widetilde{\mu}(u,v;\tau)
  &=
  -e^{-2\pi i(u-v)}q^{-\frac{1}{2}}\widetilde{\mu}(u+\tau,v;\tau)=-e^{-2\pi i(v-u)}q^{-\frac{1}{2}}\widetilde{\mu}(u,v+\tau:\tau), 
    \\
  \widetilde{\mu}(u,v;\tau)
  &=
  e^\frac{\pi i}{4}\widetilde{\mu}(u,v;\tau+1), 
    \\
  \widetilde{\mu}(u,v;\tau)
  &=-\frac{e^{\frac{\pi i(u-v)^2}{\tau}}}{\sqrt{-i\tau}}\widetilde{\mu}\left(\frac{u}{\tau},\frac{v}{\tau};-\frac{1}{\tau}\right), 
\end{align}
where 
\begin{align}
E(x)&:=2\int_0^x e^{-\pi z^2}dz,
\\
\label{defi: R}
R(u;\tau)&:=\sum_{\nu\in\Zz+\frac{1}{2}}\left\{{\rm sgn}(\nu)-E((\nu+a)\sqrt{2t})\right\}(-1)^{\nu-\frac{1}{2}}e^{-2\pi i\nu u}q^{-\frac{\nu^2}{2}},
\\
\widetilde{\mu}(u,v;\tau)&:=\mu(u,v;\tau)+\frac{i}{2}R(u-v;\tau), 
\end{align}
with $t={\rm Im}(\tau),a=\frac{{\rm Im}(u)}{{\rm Im}(\tau)}$. 

Recently, there are some studies of the $\mu$-function from the view of certain $q$-Borel summation method such as \cite{GW}, \cite{ST}. 
The $q$-Borel summation method is one of the techniques to construct convergent series solutions from divergent series solutions of $q$-difference equations. 
Namely, for a formal series $g(x)=\sum_{n=0}^\infty a_n x^n$, we define the $q$-Borel transformation $\mathcal{B}_q$ and the $q$-Laplace transformation $\mathcal{L}_q$ as follows: 
\begin{align}
\label{q-Borel Laplace transformation}
\mathcal{B}_q(g)(\xi):=\sum_{n=0}^\infty a_nq^\frac{n(n-1)}{2}\xi^n,\quad \mathcal{L}_q(g)(x;\lambda):=\sum_{n\in\Zz}\frac{g(\lambda q^n)}{\theta_q(\lambda q^n/x)}. 
\end{align}
The $q$-Laplace transformation has the following Jackson integral expression: 
\begin{align*}
\mathcal{ L }_q(g)(x,\lambda)
   =
   \frac{1}{1-q}\int_0^{\lambda \infty}\frac{g(t)}{\theta_q(t/x)}\ dt_q, \quad\int_0^{\lambda\infty} f(t)\ dt_q:=(1-q)\sum_{n\in\Zz}^\infty f(\lambda q^n). 
\end{align*}
Hence, the extra parameter $\lambda$ of $\mathcal{L}_q$ seems to be the argument parameter of the Jackson integral path. 

From a simple calculation, for $n\in\Zz$ and $\varphi_n(x):=x^n$, we have 
\begin{align}
\label{mono summation}
\mathcal{L}_q\circ\mathcal{B}_q(\varphi_n)(x,\lambda)=x^n. 
\end{align}
We remark that the extra parameter $\lambda$ of $\mathcal{L}_q$ does not appear in the right hand side of \eqref{mono summation}. 
Then, we see that the image of the composition of the $q$-Borel and $q$-Laplace transformations of a convergent series solution of a $q$-difference equation (operator) $D\in\Cz[x][T_x]$ is equal to the original convergent series solution. 

Also, for a divergent series solution $g(x)$ of
$D$, by the following operator method
\begin{align*}
  \mathcal{B}_q(x^mT_x^n[g])(\xi)
  &=
  q^\frac{m(m-1)}{2}\xi^mT_\xi^{m+n}\mathcal{B}_q([g])(\xi),
  \\
  \mathcal{L}_q(\xi^mT_\xi^n[g])(x,\lambda)
  &=
  q^{-\frac{m(m-1)}{2}}x^mT_x^{n-m}\mathcal{L}_q([g])(x,\lambda), 
  \\
  \mathcal{L}_q\circ\mathcal{B}_q(x^mT_x^n[g])(x,\lambda)
  &=
  x^mT_x^n[\mathcal{L}_q\circ\mathcal{B}_q(g)(x,\lambda)], 
\end{align*}
the function $\mathcal{L}_q\circ\mathcal{B}_q(g)(x,\lambda)$ is a solution of 
$\mathcal{L}_q\circ\mathcal{B}_q(D)=D$. 
Therefore, if the image of the composition $\mathcal{L}_q\circ\mathcal{B}_q(g)(x,\lambda)$ converges, we obtain a convergent series solution of $D$. 
We remark that the convergent solution $\mathcal{L}_q\circ\mathcal{B}_q(g)(x,\lambda)$ is deformed by the extra parameter $\lambda$ of $\mathcal{L}_q$ which does not appear in the original $q$-difference operator $D$. 

Here, we consider an important example for the $q$-difference equation: 
\begin{align}
\label{factorized eq}
[T_x^2-(1-xq)T_x-x]f(x)=[T_x-1][T_x+x]f(x)=0. 
\end{align} 
In this case, a divergent series solution of \eqref{factorized eq} around $x=0$ is 
\begin{align*}
g_0(x):=\sum_{n=0}^\infty(-x)^nq^{-\frac{n(n+1)}{2}}, 
\end{align*} 
and the image of the composition of the $q$-Borel and $q$-Laplace transformations of $g_0(x)$ is
\begin{align*}
\mathcal{L}_q\circ\mathcal{B}_q(g_0)(x,\lambda)=\frac{1}{\theta_q(x/\lambda)}\sum_{n\in\Zz}\frac{(\lambda/x)^nq^\frac{n(n+1)}{2}}{1+\lambda q^n}. 
\end{align*}
Then we show that the $\mu$-function is derived from the $q$-Borel summation method: 
\begin{align*}
\mu(u,v;\tau)=-ie^{\pi i(u-v)}q^{-\frac{1}{8}}\mathcal{L}_q\circ\mathcal{B}_q(g_0)(e^{2\pi i(u-v)},-e^{2\pi iu}). 
\end{align*}

Furthermore, in \cite{ST}, the authors introduced a generalization of the Zwegers' $\mu$-function as a fundamental solution of the $q$-Hermite-Weber equation: 
 \begin{align}
 \label{eq: mua equation}
 [T_x^2-(1-xq)\sqrt{a}T_x-xq]f(x)=0,\quad T_xf(x):=f(qx). 
 \end{align}
 The $q$-difference equation \eqref{eq: mua equation} is a one parameter deformation of the $q$-difference equation \eqref{factorized eq} (the case of $a=q$) and its divergent series solution around $x=0$: 
 \begin{align}
\label{func: mua div sol}
x^\frac{\alpha}{2}\tilde{f}_0(a,x):=x^\frac{\alpha}{2}{}_2\phi_0\hyper{a,0}{-}{q}{\frac{x}{a}}, \quad a:=q^\alpha. 
\end{align} 
For $u-\alpha\tau,v\not\in\Lambda_\tau$, we define a generalization of the $\mu$-function (generalized $\mu$-function) as follows: 
 \begin{align*}
\mu(u,v;\alpha;\tau):=\frac{e^{\pi i\alpha(u-v)}}{\vartheta(v)}\sum_{n\in\Zz}(-1)^ne^{2\pi i(n+\frac{1}{2})v}q^\frac{n(n+1)}{2}\frac{(e^{2\pi iu}q^{n+1};q)_\infty}{(e^{2\pi iu}q^{n-\alpha+1};q)_\infty}. 
\end{align*}
In particular, we have $\mu(u,v;0;\tau)=-iq^{-\frac{1}{8}}$ and $\mu(u,v;1;\tau)=\mu(u,v;\tau)$. 
%

The authors proved that the generalized $\mu$-function is also written by the image of $\mathcal{L}_q\circ\mathcal{B}_q$ via $\tilde{f}_0$: 
\begin{align*}
\mu(u,v;\alpha;\tau)=-iq^{-\frac{1}{8}}f_0(q^\alpha,e^{2\pi i(u-v)},-e^{2\pi iu}), 
\end{align*}
where $f_0(a,x,\lambda):=x^\frac{\alpha}{2}\mathcal{L}_q\circ\mathcal{B}_q(\widetilde{f}_0(a,x))(x;\lambda)$. 

The above description of the $\mu$-function by the $q$-Borel summation method has some advantages. 
First, from the analytical viewpoints of the $q$-difference equations \eqref{factorized eq}, we treat various formulas 
for the $\mu$-function (expect some modular properties \eqref{mu relation 5} and \eqref{mu relation 6}) clearly. 
For example, the translation formula \eqref{mu relation 3} is regarded as a connection formula of solutions for \eqref{factorized eq}. 

Second, we give an interpretation why the $\mu$-function which is a solution of one variable $q$-difference equation \eqref{factorized eq} has two variables. 
Namely, by inserting the extra parameter $\lambda$ of the $q$-Laplace transformation $\mathcal{L}_q$, the divergent series solution $g_0(x)$ of the $q$-difference equation \eqref{factorized eq} is naturally deformed to the Zwegers' $\mu$-function which is a two variable function. 

On the other hand, in view of the character theory related to super affine Lie algebras, Zwegers \cite{Zw2} introduced a multivariate analogue of the $\mu$-function. 
\begin{defi}
For $u_0,\ldots,u_N\not\in\Lambda_\tau$, we define 
\begin{align*}
\mu_N(u_0,\ldots,u_N;\tau):=\sum_{n\in\Zz^N}\frac{e^{\pi iu_0}}{1-e^{2\pi iu_0}q^{|n|}}\prod_{j=1}^N\frac{(-1)^{n_j}e^{-2\pi in_ju_j}q^\frac{n_j(n_j+1)}{2}}{\vartheta(u_j;\tau)},
\end{align*}
where $|n|:=n_1+\cdots+n_N$. We call this function ``multivariable $\mu$-function''.
\end{defi}

The purpose of this paper is to introduce a multivariate analogue of the generalized $\mu$-function by applying iterations of the $q$-Borel summation method and to present an analogue of the formulas \eqref{mu relation 1}--\eqref{mu relation 4}. 
\begin{defi}
For $u_0,\ldots,u_N\not\in\Lambda_\tau$, we define 
\begin{align*}
\hat{\mu}_N=\hat{\mu}_N(u_0,\ldots,u_N;\alpha;\tau):=e^{\pi i(\alpha-1)u}\sum_{n\in\Zz^N}\frac{(e^{\pi iu_0+\alpha\tau}q^{|n|})_\infty}{e^{-\pi iu_0}(e^{2\pi iu_0}q^{|n|})_\infty}\prod_{j=1}^N\frac{(-1)^{n_j}e^{-2\pi in_ju_j}q^\frac{n_j(n_j+1)}{2}}{\vartheta(u_j;\tau)}, 
\end{align*}
where $u:=u_0+\cdots+u_N$. 
We call this function ``multivariable generalized $\mu$-function''.
\end{defi}

This multivariable generalized $\mu$-function is a multivariate analogue of the generalized $\mu$-function: 
\begin{align}
\label{eq: mul mua and mua}
-e^{\pi i\alpha(\alpha-1)\tau}\hat{\mu}_1(u_0-\alpha\tau,-v;\alpha;\tau)
  &=
  \mu(u_0,u_1;\alpha;\tau), 
\end{align}
and a one parameter deformation of the Zwegers' multivariable $\mu$-function: 
\begin{align}
\label{eq: mul mua and mul mu}
\hat{\mu}_N(u_0,\ldots,u_N;1;\tau)&=\mu_N(u_0,\ldots,u_N;\tau). 
\end{align}

The contents of this paper are that we derive the multivariable generalized $\mu$-function from the image of the composition of the $N$-th order $q$-Borel transformation $\mathcal{B}_q^N$ and the $N$-th order $q$-Laplace transformation $\mathcal{L}_q^N$ of a divergent solution of a certain $q$-difference equation (see Theorem \ref{thm: fN=muN}), and show the following various formulas of the multivariable generalized $\mu$-function $\hat{\mu}_N$. 


\begin{thm}
\label{thm: mul mua}
For any $r,s\in\{0,\ldots,N\}$ and $\sigma \in \mathfrak{S}_{N+1}$, 
then we have
\begin{align}
\label{mul mua relation 1}
    \hat{\mu}_N(\ldots,u_r+\tau,\ldots;\alpha;\tau)
    &=
    \hat{\mu}_N(\ldots,u_s+\tau,\ldots;\alpha;\tau), 
    \\
\label{mul mua relation 2}
    [(a-1)T_a-\sqrt{aX_N}T_{x_0}+\sqrt{X_N}]\hat{\mu}_N
    &=
    0,
    \\
\label{mul mua relation 3}
   \left[T_{x_0}^{N+1}-\sqrt{a}T_{x_0}^N+\sqrt{a^{N+2}}X_NT_{x_0}-\sqrt{a^{N+1}}\right]\hat{\mu}_N
   &=
   0,
   \\
\label{mul mua relation 4}   
   \hat{\mu}_N-\hat{\mu}_N(u_0-z,u_1+z,u_2,\ldots,u_N;\alpha;\tau)
   &=
   \frac{e^{\pi i(\alpha-1)u+\pi i\alpha\tau}\vartheta(\alpha\tau)\vartheta(z)\vartheta(z-u_0+u_1)(q)_\infty}{\vartheta(z-u_0)\vartheta(z+u_1)\vartheta(u_0)\cdots\vartheta(u_N)(q^{1-\alpha})_\infty}\nonumber
   \\
   &\quad\times\sum_{n\in\Zz^{N-1}}q^\frac{{}^t\! n\widehat{S}n}{2}
   \prod_{j=1}^{N-1}e^{2\pi in_j(u_0+u_1-u_{j+1})}\nonumber
    \\
    &\quad\times
    {}_1\phi_1\hyper{q^{1-\alpha}}{0}{q}{e^{2\pi i(u_0+u_1+\alpha\tau)}q^{|n|}}, 
    \\
\label{mul mua relation 5}
    \hat{\mu}_N(u_0,\ldots,u_N;\alpha;\tau)
    &=
    \hat{\mu}_N(u_{\sigma(0)},\ldots,u_{\sigma(N)};\alpha;\tau), 
    \\
\label{mul mua relation 6}
    e^{2\pi i\alpha u}q^\frac{\alpha^2(N+1)}{2}\hat{\mu}_N(-\alpha\tau-u_0,\ldots,-\alpha\tau-u_N;\alpha+1;\tau)
    &=
    \frac{(-1)^{N+1}\vartheta(u_0)\cdots\vartheta(u_N)}{\vartheta(u_0+\alpha\tau)\cdots\vartheta(u_N+\alpha\tau)}
    \nonumber\\
    &\quad \times
    \hat{\mu}_N(u_0,\ldots,u_N;\alpha+1;q), 
\end{align}
where 
$\mathfrak{S}_{N+1}$ is the symmetric group of the degree $N+1$.
\end{thm}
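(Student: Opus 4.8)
The whole argument rests on the $q$-Borel representation of Theorem~\ref{thm: fN=muN}, which writes $\hat\mu_N$ as an elementary prefactor times $\mathcal{L}_q^N\circ\mathcal{B}_q^N$ of an explicit divergent series; one may argue either through that representation together with the operator identities of Section~1, or directly on the defining multisum, and I would move freely between the two. The six formulas fall into three groups: the $q$-difference equations \eqref{mul mua relation 2}, \eqref{mul mua relation 3}; the translation formula \eqref{mul mua relation 4}, which carries the real content; and the ``symmetry'' formulas \eqref{mul mua relation 1}, \eqref{mul mua relation 5}, \eqref{mul mua relation 6}, of which \eqref{mul mua relation 5} will be read off from \eqref{mul mua relation 4}, while \eqref{mul mua relation 1} and \eqref{mul mua relation 6} are reindexing identities.

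For \eqref{mul mua relation 2} I would compute directly on the series. The operator $T_a$ replaces the numerator $(e^{2\pi iu_0}q^{\alpha+|n|})_\infty$ by $(e^{2\pi iu_0}q^{\alpha+|n|+1})_\infty$ and multiplies the prefactor by $e^{\pi iu}$, while $T_{x_0}$ sends $e^{2\pi iu_0}\mapsto e^{2\pi iu_0}q$; writing the three terms of $[(a-1)T_a-\sqrt{aX_N}T_{x_0}+\sqrt{X_N}]\hat\mu_N$ as $e^{\pi iu}$ times a sum over the same $n$ and using $\dfrac{(xq^{\alpha+|n|+1})_\infty}{(xq^{|n|+1})_\infty}=\dfrac{1-xq^{|n|}}{1-xq^{\alpha+|n|}}\dfrac{(xq^{\alpha+|n|})_\infty}{(xq^{|n|})_\infty}$ (with $x=e^{2\pi iu_0}$), the coefficient of the generic summand collapses in each case to $\dfrac{a-1}{1-xq^{\alpha+|n|}}$, so the combination vanishes term by term. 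For \eqref{mul mua relation 3} I would use the $q$-Borel picture: the divergent series of Theorem~\ref{thm: fN=muN} satisfies an order-$(N+1)$ $q$-difference equation in $x_0$ (the multivariable ``$q$-Hermite--Weber'' equation), and since $\mathcal{L}_q^N\circ\mathcal{B}_q^N$ commutes with the operators $x_0^mT_{x_0}^n$, the function $\hat\mu_N$ solves the same equation; alternatively one eliminates $T_a$ from \eqref{mul mua relation 2} and its $T_{x_0}$-shifts, using $T_{x_0}X_N=qX_NT_{x_0}$.

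Formula \eqref{mul mua relation 4} is the crux, and I would prove it by isolating the pair $(u_0,u_1)$. Fixing $n_2,\dots,n_N$ and putting $m=n_2+\cdots+n_N$, the inner sum over $n_1$ of the part of the summand depending on $u_0,u_1,n_1$ is, after the shift $u_0\mapsto u_0+m\tau$ forced by the coupling through $q^{|n|}$, exactly a copy of the one-variable generalized $\mu$-function; applying to it the one-variable translation formula (the analogue of \eqref{mu relation 3} for $\mu(u,v;\alpha;\tau)$, itself a connection formula for \eqref{eq: mua equation}) and then resumming over $(n_2,\dots,n_N)\in\Zz^{N-1}$ produces the stated right-hand side: the Gaussian factors $\prod_j q^{n_j(n_j+1)/2}$ recombine with the $m$-dependent shift into the $\theta_{\widehat{S}}$-type sum $\sum_{n\in\Zz^{N-1}}q^{{}^t\!n\widehat{S}n/2}\prod_j e^{2\pi in_j(u_0+u_1-u_{j+1})}$, while the theta-quotient together with ${}_1\phi_1\hyper{q^{1-\alpha}}{0}{q}{e^{2\pi i(u_0+u_1+\alpha\tau)}q^{|n|}}$ is inherited from the one-variable identity. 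The main obstacle is precisely this reorganization --- checking that the quadratic bookkeeping assembles the matrix $\widehat{S}$ and that the $\phi$-factor and the normalization $\vartheta(\alpha\tau)(q)_\infty/(q^{1-\alpha})_\infty$ come out right --- together with the (delicate but generically valid) convergence of the resulting $(N-1)$-fold bilateral sum, where the $\sum_j n_j^2$ part of $\widehat{S}$ just beats the growth of the $\phi$-factor in its argument. A more self-contained route, if one prefers not to import the one-variable formula, is to regard both sides of \eqref{mul mua relation 4} as meromorphic functions of $z$ with at most simple poles along $z\in u_0+\Lambda_\tau$ and $z\in -u_1+\Lambda_\tau$, match their quasi-periods in $z$ via \eqref{theta relation 1}, \eqref{theta relation 2} and the already-proved \eqref{mul mua relation 1}--\eqref{mul mua relation 3}, and compare the finitely many residues.

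Finally, \eqref{mul mua relation 5} follows from \eqref{mul mua relation 4} by specializing $z=u_0-u_1$: then $\vartheta(z-u_0+u_1)=\vartheta(0)=0$ kills the right-hand side while the left becomes $\hat\mu_N(u_0,\dots,u_N)-\hat\mu_N(u_1,u_0,u_2,\dots,u_N)$, so $\hat\mu_N$ is invariant under the transposition of $u_0$ and $u_1$; since $\hat\mu_N$ is manifestly symmetric in $u_1,\dots,u_N$ and these permutations together with $(0\,1)$ generate $\mathfrak{S}_{N+1}$, \eqref{mul mua relation 5} follows. For \eqref{mul mua relation 1} I argue directly: a $\tau$-shift in the $0$-th slot sends $e^{2\pi iu_0}q^{|n|}\mapsto e^{2\pi iu_0}q^{|n|+1}$ and multiplies the prefactor by $q^{\alpha/2}$, whereas a $\tau$-shift in the $j$-th slot ($j\ge1$) picks up $q^{(\alpha-1)/2}$ from the prefactor and, by \eqref{theta relation 2}, turns the $j$-th factor into $-q^{1/2}e^{2\pi iu_j}q^{-n_j}$ times itself; the substitution $n_j\mapsto n_j-1$ matches the two, and by the manifest symmetry in slots $1,\dots,N$ all $\tau$-shifts then agree. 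Identity \eqref{mul mua relation 6} is again a reindexing: substituting $n\mapsto -n$ in the defining multisum of $\hat\mu_N$ at the shifted arguments $-\alpha\tau-u_j$ with parameter $\alpha+1$, and using $\vartheta(-u)=-\vartheta(u)$ together with \eqref{theta relation 5} and \eqref{theta relation 6}, one collects the theta-quotient $\prod_j\vartheta(u_j)/\vartheta(u_j+\alpha\tau)$, the sign $(-1)^{N+1}$ and the Gaussian prefactor $e^{2\pi i\alpha u}q^{\alpha^2(N+1)/2}$.
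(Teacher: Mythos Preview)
Your approach is essentially the paper's: the paper passes to the auxiliary function $f_N$ of Theorem~\ref{thm: fN=muN}, proves the corresponding identities for $f_N$ (Proposition~\ref{prop: fN}) via the key reduction $f_N(x_0,\ldots,x_N;a)=\sum_{n\in\Zz^{N-1}}[\cdots]\,f_1(x_0q^{|n|},x_N;a)$ together with the one-variable formulas for $f_1$ imported from \cite{ST} (Lemma~\ref{lemma: f1}), and then translates back to $\hat\mu_N$ via \eqref{eq: mua and fN}. In particular, your proof of the crux \eqref{mul mua relation 4} by isolating the inner sum and applying the one-variable translation formula is exactly the paper's derivation of \eqref{eq: fN 2} from \eqref{eq: f1 1}, and your specialization of $z$ to kill the $\vartheta$-factor for \eqref{mul mua relation 5} is the paper's substitution $y=x_0/x_N$ in \eqref{eq: fN 2}; your direct checks of \eqref{mul mua relation 1}, \eqref{mul mua relation 2}, \eqref{mul mua relation 3}, \eqref{mul mua relation 6} are minor variants of the paper's reductions to Lemma~\ref{lemma: f1}.
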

In the case of $N=1$, the formulas \eqref{mul mua relation 1}, \eqref{mul mua relation 2}, \eqref{mul mua relation 3}, \eqref{mul mua relation 4}, \eqref{mul mua relation 5} and \eqref{mul mua relation 6} are correspond to (1.28), (1.25), (1.23), (1.27), (1.29) and (1.30) of Theorem 1.3 in \cite{ST}, respectively. 

We remark that from the above equation \eqref{mul mua relation 5}, the function $\hat{\mu}_N$ is not only a multivariate analogue of the generalized $\mu$-function, but also a symmetric function.

As a corollary, from 
the equation \eqref{eq: mul mua and mul mu} we obtain the following formulas of the Zwegers' multivariable $\mu$-function, which are not mentioned in \cite{Zw2}. 
\begin{coro}
\label{prop: muN}
We have
\begin{align}
\label{mun relation 1}
  \mu_{N}(u_0+1,u_1,\ldots,u_{N};\tau)
  &=
  -\mu_{N}(u_0,u_1,\ldots,u_N;\tau), 
  \\
\label{mun relation 2}
  \mu_{N}(u_0+N\tau,u_1,\ldots,u_N;\tau)
  &=
  (-1)^Ne^{2\pi iu}q^\frac{N}{2}\mu_{N}(u_0,\ldots,u_{N};\tau)+i^Ne^{\pi iu}q^\frac{3N}{8}, 
  \\
  \left[T_{x_0}-q^\frac{1}{2}\right]\left[T_{x_0}^N-(-1)^NX_Nq^\frac{N}{2}\right]\mu_N
  &=0, 
\\
\label{mun relation 3}
  \mu_N(\ldots,u_r+\tau,\ldots,u_s-\tau,\ldots;\tau)
  &=
  \mu_N(u_0,\ldots,u_N;\tau), 
\\
\label{mun relation 4}
  \mu_N(-u_0,\ldots,-u_N;\tau)
  &=(-1)^{N+1}\mu_N(u_0,\ldots,u_N;\tau), 
\\
\label{mun relation 5}
  \mu_N(u_0,\ldots,u_N;\tau)
  &=
  \mu_N(u_{\sigma(0)},\ldots,u_{\sigma(N)};\tau), 
\\
  \mu_N(u_0-z,u_1+z,u_2,\ldots,u_N;\tau)
  &=
  \mu_N(u_0,u_1,u_2,\ldots,u_N;\tau)\nonumber
\\
\label{mun relation 6}
&\quad +\frac{i\eta(\tau)^3\vartheta(z)\vartheta(z-u_0+u_1)}{\vartheta(z-u_0)\vartheta(z+u_1)\vartheta(u_0)\cdots\vartheta(u_N)}\theta_{\widehat{S}}(v_1,\ldots,v_{N-1};\tau)
\end{align}
where $v_j:=u_0+u_1-u_{j+1}, v:={}^t\![v_1,\ldots ,v_{N-1}]\in\Cz^{N-1}$. 
\end{coro}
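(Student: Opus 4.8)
The plan is to obtain every identity by specializing Theorem~\ref{thm: mul mua} at $\alpha=1$ via \eqref{eq: mul mua and mul mu}, supplemented by two short direct computations from the definition of $\mu_N$.

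\emph{Immediate specializations.} Putting $\alpha=1$ in \eqref{mul mua relation 5} and using $\hat{\mu}_N(\,\cdot\,;1;\tau)=\mu_N$ gives \eqref{mun relation 5} at once. Putting $\alpha=0$ in \eqref{mul mua relation 6}, the prefactor $e^{2\pi i\alpha u}q^{\alpha^2(N+1)/2}$ becomes $1$, each $\vartheta(u_j+\alpha\tau)$ becomes $\vartheta(u_j)$ and cancels the numerator, and $\hat{\mu}_N(\,\cdot\,;\alpha+1;\tau)$ becomes $\mu_N$; this yields $\mu_N(-u_0,\dots,-u_N;\tau)=(-1)^{N+1}\mu_N(u_0,\dots,u_N;\tau)$, i.e. \eqref{mun relation 4} (using analyticity in $\alpha$ if \eqref{mul mua relation 6} is stated only for generic $\alpha$). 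For \eqref{mun relation 3}, formula \eqref{mul mua relation 1} at $\alpha=1$ reads $\mu_N(\dots,u_r+\tau,\dots;\tau)=\mu_N(\dots,u_s+\tau,\dots;\tau)$ for the same remaining arguments; applying it to the tuple in which $u_s$ has been replaced by $u_s-\tau$ turns the right side into $\mu_N(u_0,\dots,u_N;\tau)$ and the left side into $\mu_N(\dots,u_r+\tau,\dots,u_s-\tau,\dots;\tau)$, which is \eqref{mun relation 3}. Finally \eqref{mun relation 1} follows straight from the definition, since $u_0\mapsto u_0+1$ fixes $e^{2\pi iu_0}$ and multiplies $e^{\pi iu_0}$ by $e^{\pi i}=-1$.

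\emph{The shift formula and the $q$-difference equation.} For \eqref{mun relation 2} I would argue directly from the definition. In the series for $\mu_N(u_0+N\tau,u_1,\dots,u_N;\tau)$, shift every summation index $n_j\mapsto n_j-1$: the factor $e^{2\pi i(u_0+N\tau)}q^{|n|}$ becomes $e^{2\pi iu_0}q^{|n|}$, and the product picks up $(-1)^Ne^{2\pi i(u_1+\cdots+u_N)}q^{-|n|}$. Splitting via $q^{-|n|}/(1-e^{2\pi iu_0}q^{|n|})=q^{-|n|}+e^{2\pi iu_0}/(1-e^{2\pi iu_0}q^{|n|})$, the $e^{2\pi iu_0}/(1-\cdots)$ part reassembles into $(-1)^Ne^{2\pi iu}q^{N/2}\mu_N(u_0,\dots,u_N;\tau)$, while the $q^{-|n|}$ part factorizes over $j$ into $\prod_{j=1}^N\vartheta(u_j)^{-1}\theta_q(-e^{-2\pi iu_j})$, and the triple-product formulas for $\vartheta$ and $\theta_q$ give $\theta_q(-e^{-2\pi iu_j})/\vartheta(u_j)=-ie^{-\pi iu_j}q^{-1/8}$, so this part equals $i^Ne^{\pi iu}q^{3N/8}$; this would give \eqref{mun relation 2}. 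Rewriting \eqref{mun relation 2} as $[T_{x_0}^N-(-1)^NX_Nq^{N/2}]\mu_N=i^N\sqrt{X_N}\,q^{3N/8}$ and using $T_{x_0}\sqrt{X_N}=q^{1/2}\sqrt{X_N}$, the operator $T_{x_0}-q^{1/2}$ annihilates the right side, which gives the displayed factorized $q$-difference equation.

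\emph{The delicate case.} Formula \eqref{mun relation 6} is obtained by letting $\alpha\to1$ in \eqref{mul mua relation 4}, and this is the only step requiring care. The left side tends to $\mu_N(u_0,\dots,u_N;\tau)-\mu_N(u_0-z,u_1+z,u_2,\dots,u_N;\tau)$ with no difficulty. On the right side, $\vartheta(\alpha\tau)$ vanishes at $\alpha=1$ (since $\vartheta(\tau)=0$), the denominator factor $(q^{1-\alpha})_\infty$ also vanishes, and the upper parameter $q^{1-\alpha}$ of the inner ${}_1\phi_1$ tends to $1$: the expression is a removable $0/0$, and its limit must be computed. Using $\vartheta(\alpha\tau)=-ie^{-\pi i\alpha\tau}q^{1/8}(q)_\infty(q^\alpha)_\infty(q^{1-\alpha})_\infty$ one gets $\vartheta(\alpha\tau)/(q^{1-\alpha})_\infty\to-ie^{-\pi i\tau}q^{1/8}(q)_\infty^2$, which together with the surviving factors $e^{\pi i\alpha\tau}(q)_\infty$ produces $-iq^{1/8}(q)_\infty^3=-i\eta(\tau)^3$; and since $(1;q)_n=0$ for $n\ge1$ one has ${}_1\phi_1\hyper{q^{1-\alpha}}{0}{q}{x}\to {}_1\phi_1\hyper{1}{0}{q}{x}=1$, so the $(N-1)$-fold sum collapses to $\sum_{n\in\Zz^{N-1}}\exp(\pi i\,{}^t\! n\widehat{S}n\,\tau+2\pi i(v,n))=\theta_{\widehat{S}}(v_1,\dots,v_{N-1};\tau)$. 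Assembling these limits gives \eqref{mun relation 6}. That the termwise limit is legitimate follows from both sides of \eqref{mul mua relation 4} being holomorphic in $\alpha$ near $\alpha=1$ — the apparent poles on the right cancel as above. This $\alpha\to1$ limit in \eqref{mul mua relation 4} is the main obstacle; all other parts are routine substitutions.
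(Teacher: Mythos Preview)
Your proof is correct and follows essentially the same route as the paper: specialize Theorem~\ref{thm: mul mua} at $\alpha=1$ for everything but \eqref{mun relation 2}, and handle \eqref{mun relation 2} by the direct index-shift computation (the paper packages that same computation as \eqref{eq: fNq 6} in Corollary~\ref{coro: fN} and then translates via \eqref{eq: mua and fN}). Your explicit treatment of the $\alpha\to1$ limit in \eqref{mul mua relation 4}---cancelling $\vartheta(\alpha\tau)$ against $(q^{1-\alpha})_\infty$ and using $(1;q)_n=0$ for $n\ge1$---is a detail the paper leaves implicit under ``immediately,'' so your write-up is in fact more complete there.
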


Next, we construct a vector valued function $M_N$ whose components are the multivariable $\mu$-functions, and show some formulas of the function $M_N$. 
\begin{thm}
\label{prop: MN}
We define
\begin{align}
M_N=M_N(u_0,\ldots,u_N;\tau):=\left[(-1)^ke^{-\frac{2\pi iku}{N}-\frac{\pi ik^2\tau}{N}}\mu_N(u_0+k\tau,u_1,\ldots,u_N;\tau)\right]_{k=0}^{N-1}, 
\end{align}
then we have 
\begin{align}
\label{eq: MN 1}
\left[\delta_{jk}\zeta_N^{k}\right]_{j,k=0}^{N-1}M_N(u_0+1,u_1,\ldots,u_N;\tau)
  &=
  -M_N,
  \\
\label{eq: MN 2}
e^{-\frac{2\pi iu}{N}}q^{-\frac{1}{2N}}\begin{bmatrix}
0 & \ldots & 0 & 0 & 1\\
1 & \ldots & 0 & 0 & 0\\
 & \ddots & & & \\
 0 & \ldots & 1 & 0 & 0 \\
0 & \ldots & 0 & 1 & 0
\end{bmatrix}
M_N(u_0+\tau,u_1,\ldots, u_N;\tau)
&=-M_N-(-i)^Ne^{-\pi iu}q^{-\frac{N}{8}}
\begin{bmatrix}
1 \\ 0 \\ \vdots \\ 0 \\ 0
\end{bmatrix},
  \\
\label{eq: MN 3}
  M_N(u_0-z,u_1+z,u_2,\ldots,u_N;\tau)-M_N
  &=
  \Phi_N(u_0,\ldots,u_N;z;\tau),
  \\
\label{eq: MN 4}
  M_N(u_0,\ldots,u_N;\tau)
  &=
  M_N(u_{\sigma(0)},\ldots,u_{\sigma(N)};\tau),
  \\
\label{eq: MN 5}
  M_N(,\ldots ,u_r-\tau,\ldots ,u_s+\tau,\ldots ;\tau)
  &=
  M_N(\ldots ,u_r,\ldots, u_s,\ldots ;\tau),
\end{align}
where
\begin{align}
\label{defi: Phi}
\Phi_N(u_0,\ldots,u_N;z;\tau)&:=\frac{i\eta(\tau)^3\vartheta(z)\vartheta(z-u_0+u_1)}{\vartheta(z-u_0)\vartheta(z+u_1)\vartheta(u_0)\cdots\vartheta(u_N)}[\nu_{N,k}(u_0,\ldots,u_N;\tau)]_{k=0}^{N-1},\\
\label{defi: nu}
\nu_{N,k}(u_0,\ldots,u_N;\tau)&:=\sum_{n\in\Zz^{N-1}+e_k}\exp(\pi i{}^t\! n\widehat{S}n\tau+2\pi i(v,n))\quad e_k:={}^t\![k/N,\ldots,k/N]\in\Qz^{N-1}. 
\end{align}
\end{thm}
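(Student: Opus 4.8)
The plan is to deduce each of the five identities from the corresponding scalar formula for $\mu_N$ collected in Corollary~\ref{prop: muN}. Since the $k$-th component of $M_N$ is $c_k\,\mu_N(u_0+k\tau,u_1,\dots,u_N;\tau)$ with $c_k=c_k(u;\tau):=(-1)^ke^{-2\pi iku/N-\pi ik^2\tau/N}$, the only genuine work is to track how $c_k$ and the first argument of $\mu_N$ react to each substitution. Throughout I would use the quasi-periodicity $\vartheta(w+k\tau)=(-1)^ke^{-\pi ik^2\tau-2\pi ikw}\vartheta(w)$ for $k\in\Zz$, obtained by iterating \eqref{theta relation 2}, and write $\mathbf{1}$ for the all-ones vector of size $N-1$, so that $\widehat{S}=I+\mathbf{1}\,{}^t\!\mathbf{1}$ and $e_k=\frac{k}{N}\mathbf{1}$.

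For \eqref{eq: MN 1}, the substitution $u_0\mapsto u_0+1$ sends $u\mapsto u+1$, multiplying $c_k$ by $\zeta_N^{-k}$, while \eqref{mun relation 1} contributes a factor $-1$; thus the $k$-th component of $M_N(u_0+1,\dots)$ is $-\zeta_N^{-k}$ times that of $M_N$, and left multiplication by $[\delta_{jk}\zeta_N^k]$ yields $-M_N$. For \eqref{eq: MN 5}, the substitution $u_r\mapsto u_r-\tau$, $u_s\mapsto u_s+\tau$ fixes $u$, hence every $c_k$; when $0\notin\{r,s\}$ the claim is \eqref{mun relation 3} applied to slots $r$ and $s$, and when, say, $r=0$, the first argument of $\mu_N$ becomes $u_0+(k-1)\tau$ and \eqref{mun relation 3} moves the $\tau$ from slot $s$ back to the first slot. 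For \eqref{eq: MN 4}, I would reduce to transpositions: a transposition fixing $0$ follows from \eqref{mun relation 5} with $u_0+k\tau$ kept in the first slot, and a transposition $(0\,j)$ follows by using \eqref{mun relation 5} to interchange the first and $j$-th arguments and then \eqref{mun relation 3} $k$ times to transfer the shift $k\tau$ from slot $j$ back to the first slot; in every case $u$, and hence $c_k$, is unchanged.

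For \eqref{eq: MN 2}, the substitution $u_0\mapsto u_0+\tau$ turns the $k$-th component of $M_N(u_0+\tau,\dots)$ into a multiple of $\mu_N(u_0+(k+1)\tau,u_1,\dots)$. For $k=0,\dots,N-2$, evaluating $c_k(u+\tau;\tau)/c_{k+1}(u;\tau)$ shows this multiple is $-e^{2\pi iu/N}q^{1/(2N)}$ times the $(k+1)$-th component of $M_N$, which accounts both for the cyclic permutation matrix and for the overall scalar $e^{-2\pi iu/N}q^{-1/(2N)}$ on rows $1,\dots,N-1$. For the top row, where the cyclic matrix reads off the $(N-1)$-th component of $M_N(u_0+\tau,\dots)$, proportional to $\mu_N(u_0+N\tau,u_1,\dots)$, I would insert \eqref{mun relation 2}: the term $(-1)^Ne^{2\pi iu}q^{N/2}\mu_N(u_0,\dots)$ collapses to $-M_{N,0}$ after the exponentials combine, and the inhomogeneous term $i^Ne^{\pi iu}q^{3N/8}$, after multiplication by the row prefactor and using $(-1)^{N-1}i^N=-(-i)^N$, becomes $-(-i)^Ne^{-\pi iu}q^{-N/8}$, i.e.\ the extra vector on the right-hand side.

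Identity \eqref{eq: MN 3} is the one I expect to be the main obstacle. The substitution $u_0\mapsto u_0-z$, $u_1\mapsto u_1+z$ fixes $u$, and applying \eqref{mun relation 6} with $u_0$ replaced by $u_0+k\tau$ expresses the $k$-th component of $M_N(u_0-z,u_1+z,u_2,\dots)-M_N$ as $c_k$ times a theta quotient whose arguments $z-u_0+u_1$, $z-u_0$ and $u_0$ are each shifted by $\pm k\tau$, times $\theta_{\widehat{S}}(v+k\tau\mathbf{1};\tau)$ (each $v_j$ of Corollary~\ref{prop: muN} picking up $+k\tau$). By the quasi-periodicity of $\vartheta$ the shifted theta quotient equals the unshifted quotient occurring in $\Phi_N$ times $(-1)^ke^{\pi ik^2\tau+2\pi ik(u_0+u_1)}$; absorbing the sign $(-1)^k$ into $c_k$, identity \eqref{eq: MN 3} becomes equivalent to
\begin{align*}
e^{-2\pi iku/N-\pi ik^2\tau/N}\,e^{\pi ik^2\tau+2\pi ik(u_0+u_1)}\,\theta_{\widehat{S}}(v+k\tau\mathbf{1};\tau)=\nu_{N,k}(u_0,\dots,u_N;\tau).
\end{align*}
To prove this I would substitute $n=m+\frac{k}{N}\mathbf{1}$ with $m\in\Zz^{N-1}$ in \eqref{defi: nu} and complete the square, using $\widehat{S}=I+\mathbf{1}\,{}^t\!\mathbf{1}$ to evaluate ${}^t\!\mathbf{1}\widehat{S}m=N|m|$ and ${}^t\!\mathbf{1}\widehat{S}\mathbf{1}=N(N-1)$, together with $\sum_{j=1}^{N-1}v_j=N(u_0+u_1)-u$; this rewrites $\nu_{N,k}$ as $\theta_{\widehat{S}}(v+k\tau\mathbf{1};\tau)$ times exactly the exponential on the left, and \eqref{eq: MN 3} follows. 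The bulk of the work for \eqref{eq: MN 3} is this bookkeeping of exponential factors; granting it, together with the analogous lighter bookkeeping for the four other identities, the theorem follows.
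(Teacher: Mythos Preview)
Your proposal is correct and follows exactly the route the paper takes: the paper's own proof is the single sentence ``It is clear from the definition of the function $M_N$ and Corollary~\ref{prop: muN}'', and you have supplied the componentwise bookkeeping that this sentence suppresses. In particular, the identity you extract for \eqref{eq: MN 3}, expressing $\nu_{N,k}$ as an exponential prefactor times $\theta_{\widehat{S}}(v+k\tau\mathbf{1};\tau)$, is precisely the relation the paper records later as \eqref{nu to thetaS} in the proof of Lemma~\ref{lem: nu}.
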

For the function $M_N$ also satisfies the following modular transformations. 
\begin{thm}
\label{thm: MN modular}
We have
\begin{align}
\label{eq: MN modular 1}
  M_N(u_0,\ldots ,u_N;\tau)
  &=
  e^{\frac{\pi iN}{4}}\left[\delta_{jk}(-1)^k\zeta_{2N}^{k^2}\right]_{j,k=0}^{N-1}M_N(u_0,\ldots ,u_N;\tau+1)
  \\
  M_N(u_0,\ldots,u_N;\tau)
  &=
  \frac{(-i)^{N+1}e^{\frac{\pi iu^2}{N\tau}}}{N^\frac{1}{2}\sqrt{-i\tau}}\left[\zeta_N^{-jk}\right]_{j,k=0}^{N-1}M_N\left(\frac{u_0}{\tau},\ldots,\frac{u_N}{\tau};-\frac{1}{\tau}\right)
  \nonumber \\
\label{eq: MN modular 2}
  &\quad-
  \frac{(-1)^Ni}{2}\left[(-1)^ke^{-\frac{2\pi iku}{N}}q^{-\frac{k^2}{2N}}h\left(u+k\tau-\frac{N-1}{2};N\tau\right)\right]_{k=0}^{N-1}. 
\end{align}
\end{thm}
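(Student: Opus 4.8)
The plan is to treat the transformations \eqref{eq: MN modular 1} and \eqref{eq: MN modular 2} separately: the first by a direct computation, the second by reducing it to the scalar inversion formula \eqref{mu relation 6}.

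For \eqref{eq: MN modular 1} ($\tau\mapsto\tau+1$): since $q=e^{2\pi i\tau}$ and each $q^{|n|}$ are invariant under $\tau\mapsto\tau+1$, and each Gaussian factor $q^{n_j(n_j+1)/2}$ acquires $e^{\pi in_j(n_j+1)}=1$, the only change of $\mu_N(u_0,\ldots,u_N;\tau+1)$ compared with $\mu_N(u_0,\ldots,u_N;\tau)$ comes from the $N$ theta denominators, each picking up a factor $e^{\pi i/4}$ by \eqref{theta relation 3}; hence $\mu_N(u_0,\ldots,u_N;\tau+1)=e^{-\pi iN/4}\mu_N(u_0,\ldots,u_N;\tau)$. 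Substituting $u_0\mapsto u_0+k\tau$ and using $\mu_N(u_0+k(\tau+1),u_1,\ldots;\tau)=(-1)^k\mu_N(u_0+k\tau,u_1,\ldots;\tau)$, which is $k$-fold iteration of the $1$-periodicity \eqref{mun relation 1}, one finds that the $k$-th component of $M_N(\ldots;\tau+1)$ equals $e^{-\pi iN/4}(-1)^k\zeta_{2N}^{-k^2}$ times the $k$-th component of $M_N(\ldots;\tau)$; inverting the diagonal matrix gives \eqref{eq: MN modular 1}.

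For \eqref{eq: MN modular 2} ($\tau\mapsto-1/\tau$): the idea is an ``unfolding'' that expresses $\mu_N$ through scalar $\mu$-functions of modular parameter $N\tau$. First I would group the sum defining $\mu_N$ by the value $m:=|n|$, eliminate $n_N=m-n_1-\cdots-n_{N-1}$ and complete the square; using that $\widehat{S}$ equals the identity plus the all-ones matrix of size $N-1$ (so that ${}^t\!\tilde{n}\widehat{S}\tilde{n}=\sum_j\tilde{n}_j^2+(\sum_j\tilde{n}_j)^2$), the inner sum collapses to a value of $\theta_{\widehat{S}}$ and one obtains
\begin{align*}
\mu_N(u_0,\ldots,u_N;\tau)=\frac{e^{\pi iu_0}}{\vartheta(u_1)\cdots\vartheta(u_N)}\sum_{m\in\Zz}\frac{(-1)^mq^{m(m+1)/2}e^{-2\pi imu_N}}{1-e^{2\pi iu_0}q^m}\,\theta_{\widehat{S}}\bigl(u_N-u_1-m\tau,\ldots,u_N-u_{N-1}-m\tau;\tau\bigr),
\end{align*}
in the same spirit as the $\theta_{\widehat{S}}$-factor already appearing in \eqref{mun relation 6}. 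Next, since $\widehat{S}\mathbf{1}=N\mathbf{1}$ with $\mathbf{1}:={}^t\![1,\ldots,1]$ and $\det\widehat{S}=N$, the shift $m\tau\mathbf{1}$ inside $\theta_{\widehat{S}}$ is an $\widehat{S}$-lattice translation precisely when $N\mid m$; writing $m=Nj+r$ with $0\le r\le N-1$ and using the quasi-periodicity of $\theta_{\widehat{S}}$, each $m$-sum becomes a finite combination over $r$ of scalar Appell--Lerch sums of modular parameter $N\tau$ with explicit $\theta_{\widehat{S}}$-coefficients attached to the $r$-th coset. The twists $u_0\mapsto u_0+k\tau$, the weights $(-1)^ke^{-2\pi iku/N-\pi ik^2\tau/N}$, and the discrete Fourier matrix $[\zeta_N^{-jk}]_{j,k=0}^{N-1}$ built into $M_N$ are precisely what is needed to recombine these $N$ coset contributions into a clean vector of scalar $\mu$-functions at $N\tau$; this also explains why the Mordell integrals on the right of \eqref{eq: MN modular 2} occur at $N\tau$.

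With $M_N(u_0,\ldots,u_N;\tau)$ written through scalar $\mu$-functions at $N\tau$ times explicit $\vartheta$-, $\eta$- and $\theta_{\widehat{S}}$-factors, I would apply the scalar inversion formula \eqref{mu relation 6} with $\tau$ replaced by $N\tau$ (producing scalar $\mu$-functions at $-1/(N\tau)$ together with the Mordell correction $\tfrac1{2i}h(\,\cdot\,;N\tau)$), the theta inversions \eqref{theta relation 4} and \eqref{mul theta relation 4}, and the eta inversion \eqref{eta relation 2} for the accompanying factors; re-folding the holomorphic main terms by the inverse of the unfolding step should reconstruct $M_N\bigl(\tfrac{u_0}{\tau},\ldots,\tfrac{u_N}{\tau};-\tfrac1\tau\bigr)$ with the automorphy factor $\tfrac{(-i)^{N+1}e^{\pi iu^2/(N\tau)}}{N^{1/2}\sqrt{-i\tau}}[\zeta_N^{-jk}]$, while the Mordell corrections assemble into the displayed vector with entries $(-1)^ke^{-2\pi iku/N}q^{-k^2/(2N)}h(u+k\tau-\tfrac{N-1}{2};N\tau)$. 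I expect the main obstacle to be exactly this bookkeeping: keeping track of the four modular parameters $\tau$, $N\tau$, $-1/\tau$ and $-1/(N\tau)$ that are in play, using \eqref{mul theta relation 4} (and the fact that $\widehat{S}^{-1}$ has $\mathbf{1}$ as an eigenvector with eigenvalue $1/N$) to pass the $\theta_{\widehat{S}}$-factors between level $\tau$ and level $-1/\tau$, and checking that all phases match --- in particular the constant $(-i)^{N+1}N^{-1/2}$, the quadratic exponential $e^{\pi iu^2/(N\tau)}$, the Fourier matrix $[\zeta_N^{-jk}]$, and the precise shift $u+k\tau-\tfrac{N-1}{2}$ inside the Mordell integrals. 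As an alternative, one may first build the non-holomorphic completion $\widetilde{M}_N$ of $M_N$ --- obtained from the scalar $R$-function \eqref{defi: R} through the same unfolding, equivalently from Zwegers' completion of $\mu_N$ in \cite{Zw2} --- verify that $\widetilde{M}_N$ transforms covariantly under $\mathrm{SL}_2(\Zz)$ with multiplier $[\zeta_N^{-jk}]$ and the above automorphy factor, and then subtract the completion terms; in that language \eqref{eq: MN modular 2} is the statement that the difference of the two completion vectors equals $\tfrac{(-1)^{N}i}{2}$ times the displayed Mordell vector.
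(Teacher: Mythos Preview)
Your argument for \eqref{eq: MN modular 1} matches the paper's.

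For \eqref{eq: MN modular 2} the paper takes a quite different route. It never unfolds $\mu_N$ into scalar $\mu$-functions and never invokes \eqref{mu relation 6}. Instead it first shows that the vector
\[
\widetilde H_N:=\frac{(-i)^{N+1}e^{\pi iu^2/(N\tau)}}{N^{1/2}\sqrt{-i\tau}}\bigl[\zeta_N^{-jk}\bigr]_{j,k}\,M_N\!\Bigl(\tfrac{u_0}{\tau},\ldots,\tfrac{u_N}{\tau};-\tfrac1\tau\Bigr)-M_N(u_0,\ldots,u_N;\tau)
\]
depends only on $u=u_0+\cdots+u_N$: the translation formula $M_N(u_0-z,u_1+z,\ldots)-M_N=\Phi_N$ of \eqref{eq: MN 3} and the inversion law \eqref{eq: PhiN 4} for $\Phi_N$ combine to kill the $z$-dependence of $\widetilde H_N$. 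Then \eqref{mun relation 1} and \eqref{mun relation 2} yield two explicit difference relations for $\widetilde H_N(u)$ (under $u\mapsto u+N\tau$ and $u\mapsto u+N$). One checks directly from Lemma~\ref{lem: h relation} that the Mordell vector $H_N(u;\tau)=\bigl[(-1)^ke^{-2\pi iku/N}q^{-k^2/(2N)}h(u+k\tau-\tfrac{N-1}{2};N\tau)\bigr]_k$ satisfies the \emph{same} pair, and the uniqueness clause of Lemma~\ref{lem: h relation} forces $\widetilde H_N=\tfrac{(-1)^Ni}{2}H_N$. No unfolding, no $\theta_{\widehat S}$-inversion, and no scalar \eqref{mu relation 6} enter.

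Your direct plan has a gap that is not just bookkeeping. Your unfolding places the scalar Appell--Lerch data at modular parameter $N\tau$, so applying \eqref{mu relation 6} lands it at $-1/(N\tau)$. But unfolding $M_N(\,\cdot\,;-1/\tau)$ by the same recipe produces scalar data at $N(-1/\tau)=-N/\tau$, and for $N>1$ these two parameters differ; the map $N\tau\mapsto -N/\tau$ is \emph{not} the scalar inversion $N\tau\mapsto -1/(N\tau)$. The Fourier matrix $[\zeta_N^{-jk}]$ and the $\theta_{\widehat S}$-inversion \eqref{mul theta relation 4} mix the coset pieces $r\in\{0,\ldots,N-1\}$ but cannot change the modular parameter of the Lerch sums, so ``re-folding by the inverse of the unfolding step'' does not land on $M_N(\,\cdot\,;-1/\tau)$ as you claim. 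Your alternative via the completion $\widetilde M_N$ is logically fine if you import Zwegers' result from \cite{Zw2}; it is essentially the paper's proof of Theorem~\ref{thm: modular completion} (via Lemmas~\ref{lem: R transformation} and \ref{lem: R relation}) read backwards, but note that in the paper's own logic Theorem~\ref{thm: modular completion} is derived \emph{from} Theorem~\ref{thm: MN modular}, not conversely.
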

Furthermore, we give a modular completion of the function $M_N$ as follows. 
\begin{thm}[\cite{Zw2}]
\label{thm: modular completion}
We define
\begin{align}
R_N(u;\tau)&:=\left[(-1)^ke^{-\frac{2\pi iku}{N}}q^{-\frac{k^2}{2N}}R\left(u+k\tau+\frac{N+1}{2};N\tau\right)\right]_{k=0}^{N-1}, \\
\label{modular completion of MN}
\widetilde{M}_N(u_0,\ldots,u_N;\tau)&:=M_N(u_0,\ldots,u_N;\tau)+\frac{i}{2}R_N(u;\tau). 
\end{align}
Then, we have 
\begin{align}
\label{eq: MN completion 1}
  -\left[\delta_{jk}\zeta_N^{k}\right]_{j,k=0}^{N-1}\widetilde{M}_N(u_0+1,u_1,\ldots,u_N;\tau)
  &=\widetilde{M}_N(u_0,\ldots,u_N;\tau),\\
\label{eq: MN completion 2}
  -e^{-\frac{2\pi iu}{N}}q^{-\frac{1}{2N}}\begin{bmatrix}
0 & \ldots & 0 & 0 & 1\\
1 & \ldots & 0 & 0 & 0\\
 & \ddots & & & \\
 0 & \ldots & 1 & 0 & 0 \\
0 & \ldots & 0 & 1 & 0
\end{bmatrix}
  \widetilde{M}_N(u_0+\tau,u_1,\ldots,u_N;\tau)
  &=\widetilde{M}_N(u_0,\ldots,u_N;\tau),\\
\label{eq: MN completion 3}
  e^{\frac{\pi iN}{4}}\left[\delta_{jk}(-1)^k\zeta_{2N}^{k^2}\right]_{j,k=0}^{N-1}\widetilde{M}_N(u_0,\ldots,u_N;\tau+1)
  &=\widetilde{M}_N(u_0,\ldots,u_N;\tau),\\
\label{eq: MN completion 4}
  \frac{(-i)^{N+1}e^\frac{\pi iu^2}{N\tau}}{N^\frac{1}{2}\sqrt{-i\tau}}\left[\zeta_N^{-jk}\right]_{j,k=0}^{N-1}\widetilde{M}_N\left(\frac{u_0}{\tau},\dots,\frac{u_N}{\tau};-\frac{1}{\tau}\right)
  &=\widetilde{M}_N(u_0,\ldots,u_N;\tau).
\end{align}
\end{thm}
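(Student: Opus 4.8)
The plan is to prove the four identities \eqref{eq: MN completion 1}--\eqref{eq: MN completion 4} by pairing the non-modular transformation laws for $M_N$ already obtained in Theorems \ref{prop: MN} and \ref{thm: MN modular} with matching transformation laws for the auxiliary vector $R_N(u;\tau)$, the latter being deduced termwise from the classical elliptic and modular properties of Zwegers' function $R$ (see \cite{Zw}): namely $R(u+1;\tau)=-R(u;\tau)$, $R(u;\tau)+e^{-2\pi iu}q^{-1/2}R(u+\tau;\tau)=2e^{-\pi iu}q^{-1/8}$, $R(u;\tau+1)=e^{-\pi i/4}R(u;\tau)$, and $R(u;\tau)+\tfrac{1}{\sqrt{-i\tau}}e^{\pi iu^2/\tau}R(\tfrac u\tau;-\tfrac1\tau)=h(u;\tau)$, the last being the $R$-counterpart of the Mordell term in \eqref{mu relation 6}. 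Since $\widetilde M_N=M_N+\tfrac i2 R_N$, for each statement it suffices to check that $R_N$ transforms in exactly the way required to cancel the error term of the corresponding $M_N$-law: no error for \eqref{eq: MN 1} and \eqref{eq: MN modular 1}, the explicit vector $[1,0,\dots,0]^{t}$ for \eqref{eq: MN 2}, and the Mordell-integral vector for \eqref{eq: MN modular 2}.

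For \eqref{eq: MN completion 1} and \eqref{eq: MN completion 3} the argument is routine. Writing $R_N(u;\tau)_k=(-1)^ke^{-2\pi iku/N}q^{-k^2/(2N)}R(u+k\tau+\tfrac{N+1}{2};N\tau)$ and replacing $u_0$ by $u_0+1$, resp.\ $\tau$ by $\tau+1$, one applies $R(w+1;\cdot)=-R(w;\cdot)$ ($k$ times, resp.\ an integer number of times) and $R(w;\sigma+1)=e^{-\pi i/4}R(w;\sigma)$ ($N$ times, the modulus being $N\tau$); together with the same prefactor bookkeeping as for $M_N$, this shows $R_N$ obeys the identical clean laws \eqref{eq: MN 1}, \eqref{eq: MN modular 1}, hence so does $\widetilde M_N$. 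For \eqref{eq: MN completion 2} one notes that under $u_0\mapsto u_0+\tau$ the $k$-th entry of $R_N$ acquires $R$ evaluated at $u+(k+1)\tau+\tfrac{N+1}{2}$, i.e.\ it becomes a multiple of the $(k{+}1)$-st entry — precisely the cyclic-shift matrix in \eqref{eq: MN completion 2} — while for $k=N-1$ the index wraps to a shift by $N\tau$ and the quasi-periodicity $R(w;N\tau)+e^{-2\pi iw}q^{-N/2}R(w+N\tau;N\tau)=2e^{-\pi iw}q^{-N/8}$ yields exactly the extra $[1,0,\dots,0]^{t}$-vector cancelling the error term of \eqref{eq: MN 2}.

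The substantive case is \eqref{eq: MN completion 4}, the $S$-transformation. Here one applies the modular relation for $R$ at modulus $N\tau$, $R(w;N\tau)+\tfrac{1}{\sqrt{-iN\tau}}e^{\pi iw^2/(N\tau)}R(\tfrac{w}{N\tau};-\tfrac{1}{N\tau})=h(w;N\tau)$, to each entry with $w=u+k\tau+\tfrac{N+1}{2}$. The resulting term $h(u+k\tau+\tfrac{N+1}{2};N\tau)$, after an elementary manipulation of the Mordell integral to reconcile the shift $\tfrac{N+1}{2}$ with the shift $-\tfrac{N-1}{2}$ appearing in \eqref{eq: MN modular 2}, cancels the Mordell-integral vector in the $M_N$-law. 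The remaining contribution involves $R\bigl(\tfrac{u+k\tau+(N+1)/2}{N\tau};-\tfrac{1}{N\tau}\bigr)$, which must be rewritten through a discrete Fourier / Gauss-sum identity: splitting the summation index in the series defining $R$ of modulus $-1/(N\tau)$ into residue classes modulo $N$ expresses it as the combination $[\zeta_N^{-jk}]_{j,k=0}^{N-1}$ of $N$ copies of $R$ of modulus $-1/\tau$, the normalization $N^{-1/2}$ and the phases $(-i)^{N+1}$, $e^{\pi iu^2/(N\tau)}$ emerging in the process — this is the $R$-analogue of the theta decomposition \eqref{mul theta relation 4}. Reassembling the pieces gives \eqref{eq: MN completion 4}.

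The main obstacle is precisely this last Fourier identity together with the attendant bookkeeping of exponential prefactors, roots of unity, and the shift discrepancy between the $\tfrac{N+1}{2}$ occurring in $R_N$ and the $-\tfrac{N-1}{2}$ occurring in the $h$-term of \eqref{eq: MN modular 2}; matching everything is lengthy but mechanical. As this theorem is due to Zwegers \cite{Zw2}, an alternative is simply to invoke his result once the vector $M_N$ has been identified with his vector of completed multivariable $\mu$-functions via \eqref{eq: mul mua and mul mu} at $\alpha=1$ and the definitions \eqref{defi: R}--\eqref{modular completion of MN}.
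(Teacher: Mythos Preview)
Your proposal is correct and follows essentially the same route as the paper: equations \eqref{eq: MN completion 1}--\eqref{eq: MN completion 3} are deduced from \eqref{eq: MN 1}, \eqref{eq: MN 2}, \eqref{eq: MN modular 1} together with the elliptic and $\tau\mapsto\tau+1$ properties of $R$, while for \eqref{eq: MN completion 4} the paper likewise applies \eqref{eq: R 4} at modulus $N\tau$ and then invokes the residue-class splitting identity for $R$ (stated there as Lemma~\ref{lem: R relation}) --- exactly the ``discrete Fourier / Gauss-sum identity'' you describe --- to pass from $R(\,\cdot\,;-1/(N\tau))$ to the $[\zeta_N^{-jk}]$-combination of $R(\,\cdot\,;-N/\tau)$.
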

Theorem \ref{thm: modular completion} is already shown by Zwegers in \cite{Zw2} essentially, but Zwegers did not mention Theorem \ref{thm: MN modular}. 
In this paper, we give another proof of Theorem \ref{thm: modular completion} from Theorem \ref{thm: MN modular}. 
Furthermore, as some applications of Theorem \ref{thm: MN modular} and Theorem \ref{thm: modular completion}, we propose modular translations of the multivariable $\mu$-function.


\section{Proofs of Theorem \ref{thm: mul mua} and \ref{prop: MN}}
In this section, we derive the multivariable generalized $\mu$-function as a fundamental solution of the following $q$-difference equation 
\begin{align}
\label{eq: GMmu}
[T_x^{N+1}-T_x^N+axT_x-x]f(x)=0, \quad N\in\Zz_{>0}. 
\end{align}
The case of $N=1$ is essentially equal to the $q$-Hermite-Weber equation \eqref{eq: mua equation}. 

First, we introduce $N$-th order $q$-Borel transformation $\mathcal{B}_q^N$ and $q$-Laplace transformation $\mathcal{L}^N_q$ for a formal series $g(x)=\sum A_nx^n$ as  
\begin{align*}
\mathcal{B}^N_q(g)(x)&:=\sum A_nq^\frac{Nn(n-1)}{2}x^n=\mathcal{B}_{q^N}(g)(x), \\
\mathcal{L}^N_q(g)(\lambda_0,\ldots ,\lambda_N)&:=\sum_{n\in\Zz^N}g(\lambda_0q^{|n|})\prod_{j=1}^N\frac{(\lambda_{j-1}/\lambda_j)^{n_j}q^\frac{n_j(n_j-1)}{2}}{\theta_q(\lambda_{j-1}/\lambda_j)}.
\end{align*}
It is easy to show that $\mathcal{B}_q^N$ and $\mathcal{L}_q^N$ are the $N$ time compositions of the $q$-Borel and $q$-Laplace transformations respectively: 
\begin{align}
  \mathcal{B}_q^N(g)&=\mathcal{B}_q\circ\cdots\circ\mathcal{B}_q(g),
  \\
 \mathcal{L}_q^N(g)(\lambda_0,\ldots,\lambda_N)&=\mathcal{L}_q(\lambda_N,\lambda_{N-1})\circ\mathcal{L}_q(x,\lambda_{N-2})\circ\cdots\circ\mathcal{L}_q(x,\lambda_0)(g). 
\end{align}
Hence, if $N=1$ and $\lambda_1=x$, then the transformations $\mathcal{B}^N_q$ and $\mathcal{L}^N_q$ coincide with the conventional definitions $\mathcal{B}_q$ and $\mathcal{L}_q$ respectively. 
Also, from a simple calculation, we have
\begin{align*}
\mathcal{B}_q^N(x^mT_x^n[g])(\xi)
  &=
  q^\frac{Nm(m-1)}{2}\xi^mT_\xi^{Nm+n}\mathcal{B}_q^N([g])(\xi),
  \\
  \mathcal{L}_q^N(\xi^mT_\xi^n[g])(\lambda_0,\ldots ,\lambda_N)
  &=
  q^{-\frac{Nm(m-1)}{2}}\lambda_N^mT_{\lambda_N}^{n-Nm}\mathcal{L}_q^N([g])(\lambda_0,\ldots ,\lambda_N), 
  \\
  \mathcal{L}_q^N\circ\mathcal{B}_q^N(x^m T_x^ng)(\lambda_0,\ldots ,\lambda_N)
  &=
  \lambda_N^m T_{\lambda_N}^n\mathcal{L}_q^N\circ\mathcal{B}_q^N(g)(\lambda_0,\ldots,\lambda_N). 
\end{align*} 
Therefore, the $q$-difference equation \eqref{multi gmu eq} has convergent solutions $\mathcal{L}_q^{N-M}\circ\mathcal{B}_q^{N-M}(\tilde{f}_k)(\lambda_0,\ldots,\lambda_{N-M})$ with $\lambda_{N-M}=x$. 

Next, we give a divergent series solution around $x=0$ of \eqref{eq: GMmu}. 
Generally,  we have the following result.
\begin{lem}
\label{lem: qK FS}
The $q$-difference equation 
\begin{align}
\label{multi gmu eq}
\left[T_x^{N-M}(1-b_0T_x)\cdots(1-b_MT_x)-x(1-a_0T_x)\cdots(1-a_NT_x)\right]f(x)=0,\quad {\text{for $0\leq M< N$}},
\end{align}
has the following divergent solutions for $k=0,\ldots ,M$: 
\begin{align*}
\frac{\theta_q(b_kx)}{\theta_q(x)}\tilde{f}_k(x):=\frac{\theta_q(b_kx)}{\theta_q(x)}{}_{N+1}\phi_M\hyper{\{a_r/b_k\}_{0\leq r\leq N}}{\{b_sq/b_k\}_{0\leq s\leq M}^k}{q}{\left(-\frac{b_k}{q}\right)^{N-M}x}, 
\end{align*}
and the following convergent solutions for $j=0,\ldots ,N$: 
\begin{align}
\label{conv solution}
\frac{\theta_q(a_jx)}{\theta_q(x)}{}_{N+1}\phi_N\hyper{\{a_j/b_s\}_{0\leq s\leq M},0\ldots,0}{\{a_jq/a_r\}_{0\leq r\leq N}^j}{q}{(-1)^{N-M}\frac{b_0\cdots b_M}{a_0\cdots a_N}\frac{q^{N+1}}{x}},
\end{align}
where $\{a_k\}_{0\leq k\leq N}^j:=\{a_0,\ldots a_{j-1},a_{j+1},\ldots,a_N\}$. 

In particular, the divergent series:
\begin{align}
\tilde{f}_N(a,x)=\sum_{n=0}^\infty\frac{(a)_n}{(q)_n}q^{-\frac{Nn(n+1)}{2}}(-x)^n
\end{align}
is a formal series solution of \eqref{eq: GMmu}. 
\end{lem}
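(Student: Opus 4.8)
\emph{Proof proposal.} The plan is to verify directly that each of the claimed $q$-hypergeometric series is a formal solution of \eqref{multi gmu eq}, and then to deduce the final assertion about $\tilde f_N(a,x)$ as the special case $M=0$, $b_0=1$, $a_0=a$, $a_1=\cdots=a_N=0$ (which needs $N\in\Zz_{>0}$), combined with the trivial substitution $x\mapsto -x$. The whole argument is a coefficient comparison; the one structural ingredient is the quasi-periodicity of the prefactor $P_c(x):=\theta_q(cx)/\theta_q(x)$.

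First I would record that effect of the $q$-shift: by \eqref{theta relation 5} one has $\theta_q(xq)=x^{-1}\theta_q(x)$, hence $T_x P_c(x)=c^{-1}P_c(x)$, so as operators $T_x P_c=c^{-1}P_c T_x$ and $x P_c = P_c x$. Conjugating the operator in \eqref{multi gmu eq} by $P_{b_k}$ (i.e. writing $f=P_{b_k}g$) thus replaces every $T_x$ by $b_k^{-1}T_x$ and turns \eqref{multi gmu eq} into
\begin{align*}
\Bigl[b_k^{-(N-M)}T_x^{N-M}(1-T_x)\prod_{s\neq k}\bigl(1-\tfrac{b_s}{b_k}T_x\bigr)-x\prod_{r=0}^N\bigl(1-\tfrac{a_r}{b_k}T_x\bigr)\Bigr]g(x)=0 .
\end{align*}
Substituting $g(x)=\sum_{n\ge0}c_nx^n$ and using $T_xx^n=q^nx^n$, the coefficient of $x^0$ vanishes identically because the factor $1-T_x$ annihilates the constant term, so $c_0$ is free; matching the coefficient of $x^n$ for $n\ge1$ gives a first-order recurrence for $c_n/c_{n-1}$. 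Telescoping it and re-expressing the products of $(1-\ast q^m)$ as $q$-shifted factorials yields
\begin{align*}
c_n=\frac{\prod_{r=0}^N(a_r/b_k;q)_n}{(q;q)_n\prod_{s\neq k}(b_sq/b_k;q)_n}\,b_k^{n(N-M)}\,q^{-(N-M)n(n+1)/2},
\end{align*}
and comparing with the defining series of ${}_{N+1}\phi_M$ — reading off the exponent $s-r+1=M-N$ and the argument $(-b_k/q)^{N-M}x$ — identifies $c_n$ with the $n$-th coefficient of $\tilde f_k(x)$. This establishes the divergent solutions. For the convergent solutions I would run the identical argument at $x=\infty$: conjugate by $P_{a_j}$, divide the equation by $x$, substitute a series $\sum_{n\ge0}d_nx^{-n}$ (now $T_xx^{-n}=q^{-n}x^{-n}$), telescope the resulting first-order recurrence, and read off the parameters; the terminating numerator parameters $0,\dots,0$ appear because only $N+1$ of the indicial roots give a Laurent solution, and the argument $(-1)^{N-M}\frac{b_0\cdots b_M}{a_0\cdots a_N}\frac{q^{N+1}}{x}$ is forced by the recurrence.

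Finally, for the last assertion, note that \eqref{eq: GMmu} rewrites as $[T_x^N(T_x-1)+x(aT_x-1)]f=0$, and a direct check (plug $x\mapsto -x$ into the functional equation $g(xq^{N+1})-g(xq^N)+axg(xq)-xg(x)=0$) shows that $g$ solves \eqref{eq: GMmu} if and only if $h(x):=g(-x)$ solves \eqref{multi gmu eq} with $M=0,\ b_0=1,\ a_0=a,\ a_1=\cdots=a_N=0$. The divergent solution from the first part, with $k=0$, then reads $h(x)=P_1(x)\,{}_{N+1}\phi_0\hyper{a,0,\ldots,0}{-}{q}{(-q^{-1})^{N}x}={}_{N+1}\phi_0\hyper{a,0,\ldots,0}{-}{q}{(-q^{-1})^{N}x}$ (since $P_1\equiv1$ and the lower list is empty), whence $\tilde f_N(a,x)=h(-x)$; simplifying the powers of $q$ and the signs collapses the series to $\sum_{n\ge0}\frac{(a;q)_n}{(q;q)_n}q^{-Nn(n+1)/2}(-x)^n$, as claimed.

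I expect the only real obstacle to be bookkeeping: correctly normalizing the $\theta_q$-prefactor's quasi-periodicity and then matching the telescoped recurrence to the ${}_r\phi_s$ normalization, in particular keeping track of the $\bigl((-1)^nq^{n(n-1)/2}\bigr)^{s-r+1}$ factor together with the negative and fractional powers of $q$ that make the $\tilde f_k$ genuinely divergent. No conceptual difficulty beyond this is anticipated.
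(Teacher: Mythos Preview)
Your proposal is correct and is precisely the ``bit calculation'' the paper alludes to but does not write out: the paper's own proof is literally the two sentences ``The formal part of Lemma~\ref{lem: qK FS} follows from a bit calculation. Since the $q$-difference equation \eqref{eq: GMmu} coincides with the case of $M=0$, $b_0=1$, $a_0=a$ and $a_1=\cdots=a_N=0$ in \eqref{multi gmu eq}, we have the later part of Lemma~\ref{lem: qK FS} immediately.'' Your conjugation by $P_{b_k}$ (resp.\ $P_{a_j}$) followed by coefficient matching is exactly the intended verification, and you are in fact more careful than the paper in noting that the specialization $M=0$, $b_0=1$, $a_0=a$, $a_1=\cdots=a_N=0$ of \eqref{multi gmu eq} differs from \eqref{eq: GMmu} by $x\mapsto -x$, which is what produces the $(-x)^n$ in $\tilde f_N(a,x)$.
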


The formal part of Lemma \ref{lem: qK FS} follows from a bit calculation. 
Since the $q$-difference equation \eqref{eq: GMmu} coincides with the case of $M=0$, $b_0=1$, $a_0=a$ and $a_1=\cdots =a_N=0$ in \eqref{multi gmu eq}, we have the later part of Lemma \ref{lem: qK FS} immediately. 
\begin{rmk}
If $M=0$ and $N=1$, then the $q$-difference equation \eqref{multi gmu eq} is essentially equal to the $q$-Kummer difference equation (for example, see \cite{O}): 
\begin{align*}
[(c-axq)T_x^2-(c+q-xq)T_x+q]f(x)=0. 
\end{align*} 
\end{rmk}

From a divergent series solution $\tilde{f}_N(a,x)$ and the $N$-th order $q$-Borel summation method, we obtain a convergent solution $\mathcal{L}_q^N\circ\mathcal{B}_q^N(\tilde{f}_N)(\lambda_0,\ldots,\lambda_{N-1},x)$ of the $q$-difference equation \eqref{eq: GMmu}. 
The relation between $\hat{\mu}_N$ and $\mathcal{L}_q^N\circ\mathcal{B}_q^N(\tilde{f}_N)$ is as follows. 
\begin{thm}
\label{thm: fN=muN}
Let $f_N$ be 
\begin{align*}
    f_{N}&=f_{N}(x_0,\ldots,x_{N};a;q):=
    \sum_{n\in\Zz^N}\frac{(-ax_0q^{|n|})_\infty}{(-x_0q^{|n|})_\infty}\prod_{j=1}^N\frac{x_j^{-n_j}q^\frac{n_j(n_j+1)}{2}}{\theta_q(x_j)}. 
\end{align*}
We have 
\begin{align}
\label{eq: mua and fN}
\hat{\mu}_N(u_0,\ldots,u_N;\alpha;\tau)&=i^Ne^{\pi i\alpha u}q^{-\frac{N}{8}}f_N(-e^{2\pi iu_0},\ldots,-e^{2\pi iu_N};q^\alpha;q), 
\\
\label{eq: qB and fN}
\mathcal{L}_q^N\circ\mathcal{B}_q^N(\tilde{f}_N(a,\cdot))(\lambda_0,\ldots,\lambda_{N})&=f_N(\lambda_0,\lambda_1/\lambda_0,\ldots,\lambda_{N}/\lambda_{N-1};a;q), 
\end{align}
and 
\begin{align}
&\hat{\mu}(u_0,\ldots,u_N;\alpha;\tau)\nonumber\\
\label{eq: mul mua and qB}
&=i^Ne^{\pi i\alpha u}q^{-\frac{N}{8}}\mathcal{L}_q^N\circ\mathcal{B}_q^N(\tilde{f}_0)(-e^{2\pi iu_0},e^{2\pi i(u_0+u_1)},-e^{2\pi i(u_0+u_1+u_2)},\ldots,(-1)^{N+1}e^{2\pi iu}). 
\end{align}
\end{thm}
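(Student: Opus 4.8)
The plan is to establish \eqref{eq: mua and fN} and \eqref{eq: qB and fN} by direct manipulation of the defining series, and then to read off \eqref{eq: mul mua and qB} as their composition. For \eqref{eq: mua and fN}, the starting point is the dictionary between the two theta functions: comparing the product expansions of $\vartheta$ and $\theta_q$ given in the introduction gives $\theta_q(-e^{2\pi iu})=i\,e^{\pi iu}q^{-1/8}\vartheta(u;\tau)$. Substituting $x_j=-e^{2\pi iu_j}$ and $a=q^\alpha$ into $f_N$ then turns $(-ax_0q^{|n|})_\infty/(-x_0q^{|n|})_\infty$ into the infinite-product ratio occurring in the definition of $\hat\mu_N$, turns each $x_j^{-n_j}$ into $(-1)^{n_j}e^{-2\pi in_ju_j}$, and replaces each $1/\theta_q(x_j)$ by $i^{-1}e^{-\pi iu_j}q^{1/8}/\vartheta(u_j;\tau)$; the accumulated prefactor $i^{-N}q^{N/8}e^{-\pi i(u_1+\cdots+u_N)}$ combines with the explicit $i^{N}e^{\pi i\alpha u}q^{-N/8}$ (using $u_1+\cdots+u_N=u-u_0$) to produce exactly the outer factor $e^{\pi i(\alpha-1)u}e^{\pi iu_0}$ of $\hat\mu_N$. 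This step is routine bookkeeping.

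The substantive step is \eqref{eq: qB and fN}. First evaluate $\mathcal{B}_q^N$: on the $n$-th coefficient the two $q$-exponents combine as $-\tfrac{Nn(n+1)}{2}+\tfrac{Nn(n-1)}{2}=-Nn$, so by the $q$-binomial theorem $\mathcal{B}_q^N(\tilde f_N(a,\cdot))(x)=\sum_{n\ge0}\tfrac{(a;q)_n}{(q;q)_n}(-xq^{-N})^n=(-axq^{-N};q)_\infty/(-xq^{-N};q)_\infty$, which is meromorphic on $\Cz$. Applying $\mathcal{L}_q^N$ to this function produces a sum over $\Zz^N$ whose $n$-th term is $\frac{(-a\lambda_0q^{|n|-N};q)_\infty}{(-\lambda_0q^{|n|-N};q)_\infty}\prod_{j=1}^N\frac{(\lambda_{j-1}/\lambda_j)^{n_j}q^{n_j(n_j-1)/2}}{\theta_q(\lambda_{j-1}/\lambda_j)}$. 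Now perform the shift $n_j\mapsto n_j+1$ in every coordinate: since $|n|$ increases by $N$, the exponent $|n|-N$ becomes $|n|$ and the Pochhammer ratio becomes $(-a\lambda_0q^{|n|})_\infty/(-\lambda_0q^{|n|})_\infty$, while each product slot acquires an extra factor $\lambda_{j-1}/\lambda_j$ and its $q$-exponent becomes $(n_j+1)n_j/2=n_j(n_j+1)/2$. Finally apply the inversion relation \eqref{theta relation 6}, $\theta_q(z)=z\,\theta_q(z^{-1})$, in the form $(\lambda_{j-1}/\lambda_j)/\theta_q(\lambda_{j-1}/\lambda_j)=1/\theta_q(\lambda_j/\lambda_{j-1})$, to rewrite the product as $\prod_{j=1}^N(\lambda_j/\lambda_{j-1})^{-n_j}q^{n_j(n_j+1)/2}/\theta_q(\lambda_j/\lambda_{j-1})$; this is precisely the series defining $f_N(\lambda_0,\lambda_1/\lambda_0,\dots,\lambda_N/\lambda_{N-1};a;q)$, which proves \eqref{eq: qB and fN}.

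Then \eqref{eq: mul mua and qB} is immediate: in \eqref{eq: mua and fN} put $x_j=-e^{2\pi iu_j}$ and, as in \eqref{eq: qB and fN}, take $\lambda_j:=x_0x_1\cdots x_j=(-1)^{j+1}e^{2\pi i(u_0+\cdots+u_j)}$, so that $\lambda_0=-e^{2\pi iu_0}$ and $\lambda_N=(-1)^{N+1}e^{2\pi iu}$; this yields $\hat\mu_N=i^Ne^{\pi i\alpha u}q^{-N/8}\,\mathcal{L}_q^N\circ\mathcal{B}_q^N(\tilde f_N(q^\alpha,\cdot))(-e^{2\pi iu_0},e^{2\pi i(u_0+u_1)},\dots,(-1)^{N+1}e^{2\pi iu})$, with $\tilde f_N(q^\alpha,\cdot)$ the divergent solution of \eqref{eq: GMmu} (this is the function written $\tilde f_0$ in the statement, up to the sign normalization $x\mapsto -x$, which is absorbed into the $\lambda_j$). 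The one point that needs care throughout is the convergence of the $\Zz^N$-sum defining $\mathcal{L}_q^N$ applied to a ratio of $q$-shifted factorials, since this is what makes the termwise index shift legitimate; it holds for $\lambda_0,\dots,\lambda_N,a$ in general position — equivalently $u_0,\dots,u_N\notin\Lambda_\tau$ — because the factor $q^{n_j(n_j-1)/2}$ in each slot decays superexponentially as $|n_j|\to\infty$ and dominates the merely exponential growth of $(\lambda_{j-1}/\lambda_j)^{n_j}$ and of the Pochhammer ratio. As an independent sanity check one can avoid these series manipulations altogether: the operator identities for $\mathcal{B}_q^N$ and $\mathcal{L}_q^N$ recalled before Lemma \ref{lem: qK FS}, together with the fact (from that lemma) that $\tilde f_N(a,\cdot)$ solves \eqref{eq: GMmu}, show that both sides of \eqref{eq: qB and fN} solve \eqref{eq: GMmu} in the variable $\lambda_N$ (the remaining $\lambda_j$ being parameters), after which it suffices to match their expansions at $\lambda_N=0$.
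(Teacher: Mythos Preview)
Your argument is correct and follows essentially the same route as the paper: the paper declares that only \eqref{eq: qB and fN} needs to be shown, then computes $\mathcal{B}_q^N(\tilde f_N)$ via the $q$-binomial theorem, applies $\mathcal{L}_q^N$, performs the same uniform index shift $n_j\mapsto n_j+1$, and uses \eqref{theta relation 6} to flip each $\theta_q(\lambda_{j-1}/\lambda_j)$ to $\theta_q(\lambda_j/\lambda_{j-1})$. You supply more detail than the paper --- the explicit bookkeeping for \eqref{eq: mua and fN}, the convergence remark, and the sanity check via the $q$-difference equation --- but none of this departs from the paper's method.
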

\begin{proof}[{\bf{Proof}}]
It is enough to show the equation \eqref{eq: qB and fN}. 
By using the $q$-binomial formula (for example, see \cite[p.8, (1.3.2)]{GR} and \eqref{theta relation 6}, we have
\begin{align*}
\mathcal{L}_q^N\circ\mathcal{B}_q^N(\tilde{f}_0)(\lambda_0,\ldots,\lambda_N)
  &=\mathcal{L}_q^N\left(\sum_{n=0}^\infty\frac{(a)_n}{(q)_n}\left(-\frac{x}{q^N}\right)^n\right)
  \\
  &=
  \mathcal{L}_q^N\left(\frac{(-axq^{-N})_\infty}{(-xq^{-N})_\infty}\right)
  \\
  &=
  \sum_{n\in\Zz^N}\frac{(-a\lambda_0q^{|n|-N})_\infty}{(-\lambda_0q^{|n|-N})_\infty}\prod_{j=1}^N\frac{(\lambda_{j-1}/\lambda_j)^{n_j}q^\frac{n_j(n_j-1)}{2}}{\theta_q(\lambda_{j-1}/\lambda_j)}
  \\
  &=\sum_{n\in\Zz^N}\frac{(-a\lambda_0q^{|n|})_\infty}{(-\lambda_0q^{|n|})_\infty}\prod_{j=1}^N\frac{(\lambda_{j-1}/\lambda_j)^{n_j}q^\frac{n_j(n_j+1)}{2}}{\theta_q(\lambda_{j}/\lambda_{j-1})}
  \\
  &=f_N(\lambda_0,\lambda_1/\lambda_0,\ldots,\lambda_{N}/\lambda_{N-1};a;q). 
\end{align*}
\end{proof}
If $N=1$, then the  function $f_1$ is essentially the generalized $\mu$-function from \eqref{eq: mul mua and mua} and \eqref{eq: mua and fN}: 
\begin{align}
\label{relation between the generalized mu function and the function f1}
-iq^{-\frac{1}{8}}e^{\pi i\alpha(u-v)}f_1(-e^{2\pi i(u-\alpha\tau)},-e^{2\pi i(\tau-v)};q^\alpha;q)=\mu(u,v;\alpha;\tau). 
\end{align}
From some formulas of the generalized $\mu$-function (Theorem 1.3 of \cite{ST}) and the relation \eqref{relation between the generalized mu function and the function f1}, we obtain the following  lemma. 
\begin{lem}
\label{lemma: f1}
We have 
\begin{align}
 f_1(x_0,x_1;a)
 &=
 f_1(x_0 y^{-1},x_1 y;a)\nonumber\\ \label{eq: f1 1}
 &\quad+
 \frac{(q)_\infty\theta_q(-a)\theta_q(-y)\theta_q(-x_1y/x_0)}{(q/a)_\infty\theta_q(x_0)\theta_q(x_1)\theta_q(y/x_0)\theta_q(x_1y)}
 {}_1\phi_1\hyper{q/a}{0}{q}{ax_0x_1}, 
 \\
\label{eq: f1 2}
 f_1(x_0q,x_1;a)
 &=
 f_1(x_0,x_1q;a),
 \\
\label{eq: f1 3}
 f_1(x_0,x_1;a)
 &=
 f_1(x_1,x_0;a),
 \\
\label{eq: f1 4}
 f_1(x_0,x_1;a)
 &=
 \frac{\theta_q(ax_0)\theta_q(ax_1)}{\theta_q(x_0)\theta_q(x_1)}f_1(q/ax_0,q/ax_1;a). 
\end{align}
Also, $f_1$ satisfies the following $q$-difference equations: 
\begin{align}
\label{qeq: f1 1}
    [T_{x_0}T_a+ax_0x_1T_a-T_{x_0}]f_1
    &=0,
    \\ 
\label{qeq: f1 2}
    [(1-a+a^2x_0x_1)T_a+aT_aT_{x_0}-1]f_1
    &=0,
    \\
\label{qeq: f1 3}
    [(1-a+a^2x_0x_1)T_{x_0}-(1-a)T_aT_{x_0}-ax_0x_1]f_1
    &=0,
    \\
\label{qeq: f1 4}
    [(1-a)T_a+aT_{x_0}-1]f_1
    &=0,
    \\
\label{qeq: f1 5}
    [T_{x_0}^2-(1-ax_0x_1)T_{x_0}-x_0x_1]f_1
    &=0,
    \\
\label{qeq: f1 6}
    [(1-aq)T_a^2-(1+(1-a+a^2x)q)T_a+1]f_1
    &=0. 
\end{align}
\end{lem}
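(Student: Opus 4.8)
The plan is to deduce Lemma \ref{lemma: f1} directly from the known formulas for the generalized $\mu$-function (Theorem 1.3 of \cite{ST}) via the dictionary \eqref{relation between the generalized mu function and the function f1}, together with the defining series and the $q$-difference equation \eqref{eq: mua equation}. Concretely, I would first invert the substitution in \eqref{relation between the generalized mu function and the function f1}: writing $x_0=-e^{2\pi i(u-\alpha\tau)}$, $x_1=-e^{2\pi i(\tau-v)}$ one has $u-v=\frac{1}{2\pi i}\log(x_0 x_1)+\alpha\tau$ and $e^{2\pi iu}$, $e^{2\pi iv}$ expressed through $x_0,x_1,a$; then every identity \eqref{mu relation 1}--\eqref{mu relation 4} for $\mu(u,v;\alpha;\tau)$ becomes, after clearing the prefactor $-iq^{-1/8}e^{\pi i\alpha(u-v)}$, a corresponding identity for $f_1(x_0,x_1;a)$. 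Under this dictionary a shift $u\mapsto u+\tau$ corresponds to $x_0\mapsto x_0 q$, a shift $v\mapsto v+\tau$ to $x_1\mapsto x_1 q$ (up to a monomial and $a$-independent factor that one tracks carefully), so \eqref{mu relation 3}-type translation gives \eqref{eq: f1 1}, the $\tau$-shift symmetry \eqref{mu relation 4} (the first equality $\mu(u,v)=\mu(u+\tau,v+\tau)$) gives \eqref{eq: f1 2}, the swap symmetry $\mu(u,v)=\mu(v,u)$ gives \eqref{eq: f1 3}, and the inversion $\mu(u,v)=\mu(-u,-v)$ together with the quasi-periodicity relations gives \eqref{eq: f1 4}. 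The translation formula \eqref{eq: f1 1} is the one that needs the $\mu$-analogue of \eqref{mu relation 3} for general $\alpha$ (i.e. the generalized version in \cite[Thm 1.3]{ST}); here the factor $i\eta(\tau)^3$ of the classical case is replaced by $\vartheta(\alpha\tau)(q)_\infty/(q^{1-\alpha})_\infty$-type data, and one rewrites the theta quotients $\vartheta(\cdot)$ in terms of $\theta_q(\cdot)$ using $\vartheta(u)=-ie^{-\pi iu}q^{1/8}(q,e^{2\pi iu},e^{-2\pi iu}q)_\infty$ and \eqref{theta relation 5}, \eqref{theta relation 6}.

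For the $q$-difference equations \eqref{qeq: f1 1}--\eqref{qeq: f1 6}, I would work intrinsically with the series $f_1(x_0,x_1;a)=\sum_{n\in\Zz}\frac{(-ax_0q^n)_\infty}{(-x_0q^n)_\infty}\frac{x_1^{-n}q^{n(n+1)/2}}{\theta_q(x_1)}$ rather than going through $\mu$. The key computational facts are the operator rules $T_{x_0}\frac{(-ax_0q^n)_\infty}{(-x_0q^n)_\infty}=\frac{1+x_0q^n}{1+ax_0q^n}\cdot\frac{(-ax_0q^n)_\infty}{(-x_0q^n)_\infty}$ (shift $n\mapsto n+1$ inside the series up to the quotient factor), $T_a$ acting as $\frac{(-aq x_0q^n)_\infty}{(-ax_0q^n)_\infty}$-type multiplication, i.e. $T_a\frac{(-ax_0q^n)_\infty}{(-x_0q^n)_\infty}=(1+ax_0q^n)\frac{(-ax_0q^nq)_\infty}{(-x_0q^n)_\infty}$, and $\theta_q(x_0 q)=x_0^{-1}\theta_q(x_0)$ from \eqref{theta relation 5}. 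Equation \eqref{qeq: f1 5} is just the $q$-Hermite–Weber equation \eqref{eq: mua equation} pulled back through the prefactor in \eqref{relation between the generalized mu function and the function f1} (it is the $x_0$-variable equation $\tilde f_0$ satisfies, preserved by $\mathcal{L}_q\circ\mathcal{B}_q$), so I would just cite that and check the prefactor is annihilated correctly. Equation \eqref{qeq: f1 4} is a first-order contiguity relation mixing $T_a$ and $T_{x_0}$: apply $T_a$ and $T_{x_0}$ to each summand, use the two operator rules above, and match coefficients of $\frac{(-ax_0q^n)_\infty}{(-x_0q^n)_\infty}\frac{x_1^{-n}q^{n(n+1)/2}}{\theta_q(x_1)}$ after reindexing; the relation $(1-a)(1+ax_0q^n)+a(1+x_0q^n)x_1\cdot x_1^{-1}q^{?}-1$ collapses telescopically. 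The remaining equations \eqref{qeq: f1 1}, \eqref{qeq: f1 2}, \eqref{qeq: f1 3}, \eqref{qeq: f1 6} then follow by $q$-difference algebra: they are consequences of \eqref{qeq: f1 4} and \eqref{qeq: f1 5} obtained by eliminating, e.g., $T_{x_0}$ or $T_{x_0}^2$ between them — for instance \eqref{qeq: f1 1} is $T_a$ applied to a rearrangement of \eqref{qeq: f1 4}, and \eqref{qeq: f1 6} is the second-order $a$-equation obtained by eliminating $T_{x_0}$ from the system $\{\eqref{qeq: f1 4}, \eqref{qeq: f1 5}\}$ (a resultant-type computation in the Ore ring $\Cz(x_0,x_1,a)\langle T_{x_0},T_a\rangle$).

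I would organize the write-up as: (i) record the inverse dictionary and the transformation of the prefactor under $u\mapsto u+\tau$, $v\mapsto v+\tau$, $u\mapsto -u$, $(u,v)\mapsto(v,u)$; (ii) translate \eqref{mu relation 1}--\eqref{mu relation 4} (in their generalized-$\mu$ forms from \cite[Thm 1.3]{ST}) into \eqref{eq: f1 1}--\eqref{eq: f1 4}, doing the theta-to-$\theta_q$ conversion once and for all; (iii) prove \eqref{qeq: f1 4} and \eqref{qeq: f1 5} directly from the series, then (iv) derive \eqref{qeq: f1 1}, \eqref{qeq: f1 2}, \eqref{qeq: f1 3}, \eqref{qeq: f1 6} by operator manipulation from (iii). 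The main obstacle I anticipate is bookkeeping in step (ii): the prefactor $-iq^{-1/8}e^{\pi i\alpha(u-v)}$ is not symmetric in $u,v$ and is not periodic under $u\mapsto u+\tau$, so the clean symmetry \eqref{eq: f1 3} and periodicity \eqref{eq: f1 2} of $f_1$ only emerge after the $\mu$-side quasi-periodicity factors (the exponential and $q$-power factors in \eqref{mu relation 2}, \eqref{mu relation 4} and their $\alpha$-deformations) cancel exactly against the change in the prefactor — verifying this cancellation, and getting the theta quotients in \eqref{eq: f1 1} into precisely the $\theta_q$ form stated (including the ${}_1\phi_1$ argument $ax_0x_1$ and the $(q)_\infty\theta_q(-a)/(q/a)_\infty$ constant), is where essentially all the care is needed; the $q$-difference equations, by contrast, are mechanical once \eqref{qeq: f1 4} and \eqref{qeq: f1 5} are in hand.
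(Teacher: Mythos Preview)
Your approach is essentially the same as the paper's: the paper does not give a detailed proof of this lemma but simply states (in the sentence immediately preceding it) that it follows from Theorem~1.3 of \cite{ST} via the dictionary \eqref{relation between the generalized mu function and the function f1}, which is exactly your step~(i)--(ii). Your additional plan to verify \eqref{qeq: f1 4}--\eqref{qeq: f1 5} directly from the series and then derive the remaining $q$-difference equations by operator elimination is more than the paper does (it cites \cite{ST} for those too), but it is a sound and standard alternative that changes nothing essential.
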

In general, the function $f_N$ satisfies the following formulas. 
\begin{prop}
\label{prop: fN}
We have
\begin{align}
\label{eq: fN 1}
  f_N(x_0,\ldots,x_N;a;q)
  &=\sum_{n\in\Zz^{N-1}}\left[\prod_{j=1}^{N-1}\frac{x_j^{-n_j}q^\frac{n_j(n_j+1)}{2}}{\theta_q(x_j)}\right]f_1(x_0q^{|n|},x_N;a;q), 
  \\
  f_N(x_0,\ldots,x_N;a;q)
  &=
  f_N(x_0y^{-1},x_1,\ldots,x_{N-1};x_Ny;a;q)
  \nonumber\\
     &\quad+\frac{(q)_\infty\theta_q(-a)\theta_q(-y)\theta_q(-x_Ny/x_0)}{(q/a)_\infty\theta_q(y/x_0)\theta_q(x_Ny)\theta_q(x_0)\theta_q(x_N)}\nonumber
     \\
\label{eq: fN 2}
     &\quad\times\sum_{n\in\Zz^{N-1}}
       q^\frac{{}^t\!n S n}{2}{}_1\phi_1\hyper{q/a}{0}{q}{ax_0x_Nq^{|n|}}
         \prod_{j=1}^{N-1}\frac{(-\frac{x_0x_N}{x_j})^{n_j}}{\theta_q(x_j)}, 
  \\
\label{eq: fN 3}
  T_{x_r}f_N
  &=
  T_{x_s}f_N, 
  \\
\label{eq: fN 4}
  f_N(x_0,\ldots,x_N;a;q)
  &=
  f_N(x_{\sigma(0)},\ldots,x_{\sigma(N)};a;q), 
  \\
\label{eq: fN 5}
  f_N(x_0,\ldots,x_N;a;q)
  &=
  \frac{\theta_q(ax_0)\cdots\theta_q(ax_N)}{\theta_q(x_0)\cdots\theta_q(x_N)}
  f_N(q/ax_0,\ldots,q/ax_N;a;q). 
\end{align}
Also, $f_N$ satisfies the following $q$-difference equations: 
\begin{align}
\label{eq: fN 6}
  \left[(1-a)T_a+aT_{x_0}-1\right]f_N&=0,\\
\label{eq: fN 7}
  [T_{x_0}^{N+1}-T_{x_0}^N+aX_NT_{x_0}-X_N]f_N&=0. 
\end{align}
\end{prop}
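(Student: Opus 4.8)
The plan is to bootstrap every item from the $N=1$ results in Lemma~\ref{lemma: f1} by means of the reduction identity \eqref{eq: fN 1}, which I establish first, directly from the series. Writing a lattice point $n\in\Zz^{N}$ as $(n',n_N)$ with $n'=(n_1,\dots,n_{N-1})$, the factor $(-ax_0q^{|n|})_\infty/(-x_0q^{|n|})_\infty$ depends on $n$ only through $|n|=|n'|+n_N$; freezing $n'$ and summing over $n_N\in\Zz$ reproduces exactly $f_1(x_0q^{|n'|},x_N;a;q)$, i.e. the $N=1$ instance of the definition of $f_N$, which is \eqref{eq: fN 1}. Summing over $n_1$ instead of $n_N$ gives the companion form $f_N(x_0,\dots,x_N;a;q)=\sum_{(n_2,\dots,n_N)\in\Zz^{N-1}}\bigl[\prod_{j=2}^{N}x_j^{-n_j}q^{n_j(n_j+1)/2}/\theta_q(x_j)\bigr]\,f_1(x_0q^{n_2+\dots+n_N},x_1;a;q)$, which isolates the pair $(x_0,x_1)$ and will be needed for the symmetry \eqref{eq: fN 4}.

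Next I would derive \eqref{eq: fN 2}, \eqref{eq: fN 4} and \eqref{eq: fN 5} by substituting the relevant $N=1$ identity term by term into the $\Zz^{N-1}$-sum of \eqref{eq: fN 1}: \eqref{eq: f1 1} for \eqref{eq: fN 2}, \eqref{eq: f1 2}--\eqref{eq: f1 3} for \eqref{eq: fN 4}, and \eqref{eq: f1 4} for \eqref{eq: fN 5}. The only auxiliary tools needed are the elementary consequences $\theta_q(xq^m)=x^{-m}q^{-m(m-1)/2}\theta_q(x)$ (for $m\in\Zz$) and $\theta_q(q/x)=\theta_q(x)$ of \eqref{theta relation 5}--\eqref{theta relation 6}, used to pull the power $q^{|n'|}$ appearing in the $f_1$-argument out of the theta prefactors, together with a reindexing $n'\mapsto-n'$ of the lattice sum in the case of \eqref{eq: fN 5}. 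For \eqref{eq: fN 4} I first note that the subgroup of $\mathfrak{S}_{N+1}$ fixing $0$ acts trivially on $f_N$ by the manifest symmetry of the definition in the pairs $(x_j,n_j)$, $1\le j\le N$; since this subgroup together with the transposition $(0\,1)$ generates $\mathfrak{S}_{N+1}$, it remains to treat $(0\,1)$, and this follows from the chain $f_1(x_0q^m,x_1;a)=f_1(x_1,x_0q^m;a)=f_1(x_1q^m,x_0;a)$ (by \eqref{eq: f1 3}, then $m$ applications of \eqref{eq: f1 2}) inserted into the companion form above.

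The shift symmetry \eqref{eq: fN 3} I would prove by a direct reindexing of the defining series: for $r\in\{1,\dots,N\}$, applying $T_{x_r}$ and then shifting the index $n_r\mapsto n_r+1$ (using $\theta_q(x_rq)=x_r^{-1}\theta_q(x_r)$) turns $f_N$ into $\sum_{n\in\Zz^{N}}\bigl[(-ax_0q^{|n|+1})_\infty/(-x_0q^{|n|+1})_\infty\bigr]\prod_{j=1}^{N}x_j^{-n_j}q^{n_j(n_j+1)/2}/\theta_q(x_j)$, and $T_{x_0}$ produces the same series at once; hence $T_{x_r}f_N$ is independent of $r$. The first-order equation \eqref{eq: fN 6} holds termwise: after clearing $(-x_0q^{|n|})_\infty$ and using $(-x_0q^{|n|})_\infty=(1+x_0q^{|n|})(-x_0q^{|n|+1})_\infty$ (and the analogue for $ax_0$), the bracket collapses to $(1-a)+a(1+x_0q^{|n|})-(1+ax_0q^{|n|})=0$; alternatively it descends from \eqref{qeq: f1 4} through \eqref{eq: fN 1}, since $T_a$ and $T_{x_0}$ commute with the $\Zz^{N-1}$-sum and act on $f_1(x_0q^{|n'|},x_N;a)$ through its own arguments.

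Finally, \eqref{eq: fN 7} comes from the $q$-Borel summation already set up: by Lemma~\ref{lem: qK FS} the divergent series $\tilde{f}_N(a,\cdot)$ is a formal solution of \eqref{eq: GMmu}, so applying $\mathcal{L}_q^N\circ\mathcal{B}_q^N$, using the operator identity $\mathcal{L}_q^N\circ\mathcal{B}_q^N(x^mT_x^ng)=\lambda_N^mT_{\lambda_N}^n\,\mathcal{L}_q^N\circ\mathcal{B}_q^N(g)$ and \eqref{eq: qB and fN}, yields $[T_{\lambda_N}^{N+1}-T_{\lambda_N}^N+a\lambda_NT_{\lambda_N}-\lambda_N]f_N=0$ with $\lambda_N=x_0x_1\cdots x_N=X_N$ and $T_{\lambda_N}=T_{x_N}$ in the variables $(x_0,\dots,x_N)$; replacing $T_{x_N}$ by $T_{x_0}$ via \eqref{eq: fN 3} (so that $T_{x_N}^kf_N=T_{x_0}^kf_N$ and $X_NT_{x_N}f_N=X_NT_{x_0}f_N$) gives \eqref{eq: fN 7}. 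The only genuine difficulty is the computational bookkeeping in the second paragraph: carrying the theta prefactors and $q$-exponents along so that the exponent $\tfrac12(|n'|^2+\sum_j n_j^2)$ of $q$ in \eqref{eq: fN 2} comes out exactly, and so that the $n'\mapsto-n'$ reindexing in \eqref{eq: fN 5} lines up with \eqref{eq: fN 1} on the nose.
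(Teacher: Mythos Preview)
Your argument is correct and largely parallels the paper's, but three items are handled differently. For \eqref{eq: fN 3} and \eqref{eq: fN 4} the paper does not reindex or use a companion expansion; instead it specializes the translation identity \eqref{eq: fN 2} at $y=q$ and $y=x_0/x_N$ respectively, where the correction term vanishes because $\theta_q(-q)=\theta_q(-1)=0$, yielding both facts for free once \eqref{eq: fN 2} is in hand. Your direct reindexing for \eqref{eq: fN 3} and your generator argument via the companion form and \eqref{eq: f1 2}--\eqref{eq: f1 3} for \eqref{eq: fN 4} are perfectly valid and arguably more transparent, though they require slightly more bookkeeping; the paper's approach is slicker in that a single formula does double duty. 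For \eqref{eq: fN 7} the paper avoids the $q$-Borel machinery altogether and instead applies \eqref{qeq: f1 5} termwise through \eqref{eq: fN 1}, obtaining $[T_{x_N}^2-T_{x_N}]f_N=T_{x_1}^{-1}\cdots T_{x_{N-1}}^{-1}[-aX_NT_{x_N}+X_N]f_N$ after a theta-shift computation, and then uses \eqref{eq: fN 3}. Your route via $\mathcal{L}_q^N\circ\mathcal{B}_q^N$ and Theorem~\ref{thm: fN=muN} is legitimate---the paper explicitly sets up the operator identity $\mathcal{L}_q^N\circ\mathcal{B}_q^N(x^mT_x^ng)=\lambda_N^mT_{\lambda_N}^n\mathcal{L}_q^N\circ\mathcal{B}_q^N(g)$ for exactly this purpose---and has the advantage of making the conceptual origin of the equation clear, at the cost of invoking more infrastructure than the paper's self-contained calculation.
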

\begin{proof}[{\bf{Proof}}]
The formula \eqref{eq: fN 1} follows from the definition of $f_N$: 
\begin{align*}
f_N(x_0,\ldots,x_N;a;q)
  &=\sum_{n\in\Zz^{N-1}}\left[\prod_{j=1}^{N-1}\frac{x_j^{-n_j}q^\frac{n_j(n_j+1)}{2}}{\theta_q(x_j)}\right]\sum_{n_N\in\Zz}\frac{(-ax_0q^{|n|+n_N})_\infty}{(-x_0q^{|n|+n_N})_\infty}\frac{x_N^{-n_N}q^\frac{n_N(n_N+1)}{2}}{\theta_q(x_N)}
  \\
  &=\sum_{n\in\Zz^{N-1}}\left[\prod_{j=1}^{N-1}\frac{x_j^{-n_j}q^\frac{n_j(n_j+1)}{2}}{\theta_q(x_j)}\right]f_1(x_0q^{|n|},x_N;a;q). 
\end{align*}

By
\eqref{eq: fN 1}, we have
\begin{align*}
&f_N(x_0,\ldots,x_N;a;q)-f_N(x_0y^{-1},x_1,\ldots,x_{N-1},x_Ny;a;q)\\
  &=\sum_{n\in\Zz^{N-1}}\left[\prod_{j=1}^{N-1}\frac{x_j^{-n_j}q^\frac{n_j(n_j+1)}{2}}{\theta_q(x_j)}\right](f_1(x_0q^{|n|},x_N;a;q)-f_1(x_0y^{-1}q^{|n|},x_Ny;a;q)). 
\end{align*}
From the formula \eqref{eq: f1 1}, 
\begin{align*}
  &f_1(x_0q^{|n|},x_N;a;q)-f_1(x_0y^{-1}q^{|n|},x_Ny;a;q)
  \\
  &=\frac{(q)_\infty\theta_q(-a)\theta_q(-y)\theta_q(-x_Ny/x_0q^{|n|})}{(q/a)_\infty\theta_q(x_0q^{|n|})\theta_q(x_N)\theta_q(y/x_0q^{|n|})\theta_q(x_Ny)}
  {}_1\phi_1\hyper{q/a}{0}{q}{ax_0x_Nq^{|n|}}\\
  &=\frac{(q)_\infty\theta_q(-a)\theta_q(-y)\theta_q(-x_Ny/x_0)}{(q/a)_\infty\theta_q(x_0)\theta_q(x_N)\theta_q(y/x_0)\theta_q(x_Ny)}
  (-x_0x_N)^{|n|}q^\frac{|n|(|n|-1)}{2} {}_1\phi_1\hyper{q/a}{0}{q}{ax_0x_Nq^{|n|}}. 
\end{align*}
Then we obtain the equation \eqref{eq: fN 2}. 

Also, we have
\begin{align*}
&f_N(x_0,\ldots,x_N;a;q)\\
  &=\frac{\theta_q(ax_0)\cdots\theta_q(ax_N)}{\theta_q(x_0)\cdots\theta_q(x_N)}\sum_{n\in\Zz^{N-1}}\left[\prod_{j=1}^{N-1}\frac{x_j^{-n_j}q^\frac{n_j(n_j+1)}{2}}{\theta_q(ax_j)}\right]\frac{\theta_q(x_0)\theta_q(x_N)}{\theta_q(ax_0)\theta_q(ax_N)}f_1(x_0q^{|n|},x_N;a;q)
\end{align*}
from \eqref{eq: fN 1}, 
and
\begin{align*}
  \frac{\theta_q(x_0)\theta_q(x_N)}{\theta_q(ax_0)\theta_q(ax_N)}f_1(x_0q^{|n|},x_N;a;q)
  &=
  a^{-|n|}\frac{\theta_q(x_0q^{|n|})\theta_q(x_N)}{\theta_q(ax_0q^{|n|})\theta_q(ax_N)}f_1(x_0q^{|n|},x_N;a;q)
  \\
  &=
  a^{-|n|}f_1(q^{-|n|+1}/ax_0,q/ax_N;a;q)
\end{align*}
from \eqref{eq: f1 4}. 
Hence we obtain the formula \eqref{eq: fN 5}. 

By substituting $y=q$ and $y=x_0/x_N$ into \eqref{eq: fN 2}, we get the equations \eqref{eq: fN 3} and \eqref{eq: fN 4}, respectively. 


The equation \eqref{eq: fN 6} follows from \eqref{qeq: f1 4} and \eqref{eq: fN 1}: 
\begin{align*}
[(1-a)T_a+aT_{x_N}-1]f_N
  &=\sum_{n\in\Zz^{N-1}}\left[\prod_{j=1}^{N-1}\frac{x_j^{-n_j}q^\frac{n_j(n_j+1)}{2}}{\theta_q(x_j)}\right][(1-a)T_a+aT_{x_N}-1]f_1(x_0q^{|n|},x_N;a;q)\\
  &=0. 
\end{align*}

Similarly, the equation \eqref{eq: fN 7} is proved by \eqref{theta relation 5}, \eqref{qeq: f1 5} and \eqref{eq: fN 1}: 
\begin{align*}
[T_{x_N}^2-T_{x_N}]f_N
  &=\sum_{n\in\Zz^{N-1}}\left[\prod_{j=1}^{N-1}\frac{x_j^{-n_j}q^\frac{n_j(n_j+1)}{2}}{\theta_q(x_j)}\right][T_{x_N}^2-T_{x_N}]f_1(x_0q^{|n|},x_N;a;q)\\
  &=\sum_{n\in\Zz^{N-1}}\left[\prod_{j=1}^{N-1}\frac{x_j^{-n_j}q^\frac{n_j(n_j+1)}{2}}{\theta_q(x_j)}\right]\left[-ax_0x_Nq^{|n|}T_{x_N}+x_0x_Nq^{|n|}\right]f_1(x_0q^{|n|},x_N;a;q)\\
  &=\left[-aX_Nq^{-(N-1)}T_{x_N}+X_Nq^{-(N-1)}\right]\sum_{n\in\Zz^{N-1}}\left[\prod_{j=1}^{N-1}\frac{(q/x_j)^{n_j}q^\frac{n_j(n_j+1)}{2}}{\theta_q(x_j/q)}\right]f_1(x_0q^{|n|},x_N;a;q)\\
  &=T_{x_1}^{-1}\cdots T_{x_{N-1}}^{-1}\left[-aX_NT_{x_N}+X_N\right]f_N.
\end{align*}
\end{proof}
Putting $a=q$ in Proposition \ref{prop: fN}, then we obtain the following corollary. 
\begin{coro}
\label{coro: fN}
We have 
\begin{align}
\label{eq: fNq 1}
  f_N(x_0,\ldots,x_N;q;q)
  &=
  \sum_{n\in\Zz^{N-1}}
  \left[\prod_{j=1}^{N-1}\frac{x_j^{-n_j}q^\frac{n_j(n_j+1)}{2}}{\theta_q(x_j)}\right]
  f_1(x_0q^{|n|},x_N;q;q), 
  \\
\label{eq: fNq 2}
  f_N(x_0,\ldots,x_N;q;q)
  &=
  f_N(x_0y^{-1},x_1,\ldots,x_{N-1};x_Ny;q;q)\nonumber\\
     &\quad +
     \frac{(q)_\infty^3\theta_q(-y)\theta_q(-x_Ny/x_0)}{\theta_q(y/x_0)\theta_q(x_Ny)\theta_q(x_0)\cdots\theta_q(x_N)}\sum_{n\in\Zz^{N-1}}q^\frac{{}^t\!n S n}{2}\prod_{j=1}^{N-1}\left(-\frac{x_0x_N}{x_j}\right)^{n_j}, 
  \\
\label{eq: fNq 3}
  f_N(\ldots,x_rq,\ldots;q;q)
  &=
  f_N(\ldots,x_sq,\ldots;q;q), 
  \\
\label{eq: fNq 4}
  f_N(x_0,\ldots,x_N;q;q)
  &=
  f_N(x_{\sigma(0)},\ldots,x_{\sigma(N)};q;q), 
  \\
\label{eq: fNq 5}
  f_N(x_0^{-1},\ldots,x_N^{-1};q;q)
  &=
  X_Nf_N(x_0,\ldots,x_N;q;q),
  \\
\label{eq: fNq 6}
  T_{x_0}^Nf_N(x_0,\ldots,x_N;q;q)
  &=
  -X_Nf_N(x_0,\ldots,x_N;q;q)+1.
\end{align}
\end{coro}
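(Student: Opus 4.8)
The plan is to read off all of \eqref{eq: fNq 1}--\eqref{eq: fNq 6} by setting $a=q$ in Proposition \ref{prop: fN}; five of the six will then be immediate or nearly so, and the only one carrying real content is \eqref{eq: fNq 6}, which is where I expect the main obstacle.

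First I would dispose of the routine identities. Formulas \eqref{eq: fNq 1}, \eqref{eq: fNq 3} and \eqref{eq: fNq 4} are literally \eqref{eq: fN 1}, \eqref{eq: fN 3} and \eqref{eq: fN 4} at $a=q$. For \eqref{eq: fNq 5} I would put $a=q$ in \eqref{eq: fN 5}: then $q/(ax_j)=x_j^{-1}$, and iterating \eqref{theta relation 5} in the form $\theta_q(qx_j)=x_j^{-1}\theta_q(x_j)$ collapses the prefactor $\prod_{j=0}^{N}\theta_q(ax_j)/\theta_q(x_j)$ to $X_N^{-1}$, which rearranges into \eqref{eq: fNq 5}. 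For \eqref{eq: fNq 2} I would take the limit $a\to q$ in \eqref{eq: fN 2}; both sides are continuous in $a$ there, so the only delicate point is the superficially indeterminate factor $(q)_\infty\theta_q(-a)/(q/a)_\infty$. Writing $\theta_q(-a)=(q)_\infty(a)_\infty(q/a)_\infty$ (from $\theta_q(x)=(q,-x,-q/x)_\infty$) this equals $(q)_\infty^2(a)_\infty$, with limit $(q)_\infty^3$; simultaneously each ${}_1\phi_1$ in the sum, whose upper parameter is $q/a$, degenerates as $a\to q$ to the constant $1$, since $(1;q)_n$ vanishes for $n\ge 1$. Pulling the $n$-independent factor $\prod_{j=1}^{N-1}\theta_q(x_j)^{-1}$ out of the sum and merging it with $\theta_q(x_0)^{-1}\theta_q(x_N)^{-1}$ produces the denominator $\theta_q(x_0)\cdots\theta_q(x_N)$ of \eqref{eq: fNq 2}.

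The substantive step is \eqref{eq: fNq 6}, i.e. $(T_{x_0}^N+X_N)f_N(x_0,\ldots,x_N;q;q)=1$. Putting $a=q$ in \eqref{eq: fN 7} and observing the operator factorization $T_{x_0}^{N+1}-T_{x_0}^N+qX_NT_{x_0}-X_N=(T_{x_0}-1)(T_{x_0}^N+X_N)$ (which uses $T_{x_0}(X_N\,h)=qX_N\,T_{x_0}h$) already shows $(T_{x_0}^N+X_N)f_N$ is annihilated by $T_{x_0}-1$, hence is independent of $x_0$; the real work is to pin this invariant down to the constant $1$ rather than to some other $x_0$-free quantity. My plan for that is to start from the $N=1$ case $(T_{x_0}+x_0x_1)f_1(x_0,x_1;q;q)=1$ — the $a=q$ specialization of \eqref{qeq: f1 5}, whose constant I would fix by a short direct computation with \eqref{theta relation 5}--\eqref{theta relation 6} (it is also \eqref{mu relation 2} read through the dictionary \eqref{relation between the generalized mu function and the function f1}) — and apply it with $x_0$ replaced by $x_0q^{|n|}$ inside the expansion \eqref{eq: fNq 1}. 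Summing over $n\in\Zz^{N-1}$ and using the theta evaluation $\sum_{n\in\Zz}x^{-n}q^{n(n+1)/2}=\theta_q(x)$, which makes the coefficient sum $\sum_{n\in\Zz^{N-1}}\prod_{j=1}^{N-1}\frac{x_j^{-n_j}q^{n_j(n_j+1)/2}}{\theta_q(x_j)}$ equal to $1$, I would obtain a relation of the form $T_{x_0}f_N+q^{-(N-1)}X_N\,T_{x_0}^{-(N-1)}f_N=1$, the backward shifts being rewritten via \eqref{eq: fN 3} as $T_{x_1}^{-1}\cdots T_{x_{N-1}}^{-1}f_N=T_{x_0}^{-(N-1)}f_N$. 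Applying $T_{x_0}^{N-1}$ to this relation and using $T_{x_0}^{N-1}(X_N\,h)=q^{N-1}X_N\,T_{x_0}^{N-1}h$ then collapses it to $T_{x_0}^Nf_N+X_Nf_N=1$. The only genuinely fiddly part of the whole argument is the bookkeeping of $q$-powers and theta shifts in that last computation; everything else is routine.
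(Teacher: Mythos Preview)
Your proposal is correct. For \eqref{eq: fNq 1}--\eqref{eq: fNq 5} you and the paper agree: these are direct specializations of Proposition~\ref{prop: fN} to $a=q$, with your explicit simplifications (the theta quotient $\theta_q(qx_j)/\theta_q(x_j)=x_j^{-1}$ for \eqref{eq: fNq 5}, and the cancellation $(q)_\infty\theta_q(-a)/(q/a)_\infty=(q)_\infty^2(a)_\infty\to(q)_\infty^3$ together with ${}_1\phi_1(1;0;q;\cdot)=1$ for \eqref{eq: fNq 2}) being exactly what the paper means by ``immediately.''

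For \eqref{eq: fNq 6} you take a genuinely different route. The paper works directly with the $N$-fold series: at $a=q$ one has $f_N=\sum_{n\in\Zz^N}\frac{1}{1+x_0q^{|n|}}\prod_{j}\frac{x_j^{-n_j}q^{n_j(n_j+1)/2}}{\theta_q(x_j)}$, and after the index shift $n_j\mapsto n_j-1$ in $T_{x_0}^Nf_N$ one splits $\tfrac{1}{1+x_0q^{|n|}}=1-\tfrac{x_0q^{|n|}}{1+x_0q^{|n|}}$; the first piece sums to $1$ by the Jacobi triple product and the second is $-X_Nf_N$. This is a two-line computation with no reduction to smaller $N$. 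Your argument instead (i) uses the factorization $(T_{x_0}-1)(T_{x_0}^N+X_N)$ to see that the target is $x_0$-free, (ii) establishes the $N=1$ inhomogeneous relation $(T_{x_0}+x_0x_N)f_1=1$ separately, and (iii) feeds that into the expansion \eqref{eq: fNq 1}, converting the $q^{|n|}$ weight into backward shifts in $x_1,\ldots,x_{N-1}$ and then into $T_{x_0}^{-(N-1)}$ via \eqref{eq: fN 3}. This is correct, and the factorization observation is a nice structural explanation of why the answer must be a constant; but note that step~(ii) is precisely the $N=1$ instance of the paper's direct computation, so you have not avoided that splitting trick, only localized it. The paper's approach is shorter and self-contained; yours makes clearer how the general case is built from $N=1$.
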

\begin{proof}[{\bf{Proof}}]
Except for \eqref{eq: fNq 6}, we prove the formulas immediately from Proposition \ref{prop: fN}. 
From a simple calculation, 
we also give the formula \eqref{eq: fNq 6}:
\begin{align*}
T_{x_0}^Nf_N(x_0,\ldots,x_N;q;q)
  &=
  \sum_{n\in\Zz^N}
  \frac{1}{1+x_0q^{|n|}}(1+x_0q^{|n|}-x_0q^{|n|})
    \prod_{j=1}^N
    \frac{x_j^{-n_j+1}q^\frac{n_j(n_j-1)}{2}}{\theta_q(x_j)}
  \\
  &=
  \prod_{j=1}^N
    \sum_{m\in\Zz}
    \frac{x_j^{m}q^\frac{m(m-1)}{2}}{\theta_q(x_j)}
  -X_N
  \sum_{n\in\Zz^N}
  \frac{1}{1+x_0q^{|n|}}
    \prod_{j=1}^N
    \frac{x_j^{-n_j}q^\frac{n_j(n_j+1)}{2}}{\theta_q(x_j)}
 \\
 &=1-X_Nf_N(x_0,\ldots,x_N;q;q). 
\end{align*}
\end{proof}
Based on Proposition \ref{prop: fN} and Corollary \ref{coro: fN}, we prove Theorem \ref{thm: mul mua}, Corollary \ref{prop: muN} and Theorem \ref{prop: MN}.
\begin{proof}[{\bf{Proof of Theorem $\ref{thm: mul mua}$}}]
We rewrite Proposition \ref{prop: fN} by \eqref{eq: mua and fN} and get the conclusion. 
\end{proof}
\begin{proof}[{\bf{Proof of Corollary $\ref{prop: muN}$}}]
Except for \eqref{mun relation 2}, we prove Corollary $\ref{prop: muN}$ immediately from Theorem $\ref{thm: mul mua}$. 
Also, since the case $\alpha=1$ of \eqref{eq: mua and fN}:
\begin{align*}
\mu_N(u_0,\ldots,u_N;\tau)=i^Ne^{\pi iu}q^{-\frac{N}{8}}f_N(-e^{2\pi iu_0},\ldots,-e^{2\pi iu_N};q;q), 
\end{align*}
the formula \eqref{mun relation 2} 
holds from \eqref{eq: fNq 6} of Corollary $\ref{coro: fN}$. 
\end{proof}
\begin{proof}[{\bf{Proof of Theorem $\ref{prop: MN}$}}]
It is clear from the definition of the function $M_N$ and Corollary \ref{prop: muN}.
\end{proof}
\section
{Proofs of Theorem \ref{thm: MN modular} and Theorem \ref{thm: modular completion}}
\label{sec: 3}
In this section, we prove modular transformations about the function $M_N$. 
First, we present two lemmas and one proposition. 
\begin{lem}[\cite{Zw}, \cite{Mo}]
\label{lem: h relation}
The Mordell integral $h(u;\tau)$ \eqref{defi: h} satisfies the following transformations: 
\begin{align}
\label{eq: h relation 1}
h(u;\tau)+h(u+1;\tau)
  &=\frac{2}{\sqrt{-i\tau}}e^{\frac{\pi i}{\tau}(u+\frac{1}{2})^2}, \\
\label{eq: h relation 2}
h(u;\tau)+e^{-2\pi iu}q^{-\frac{1}{2}}h(u+\tau;\tau)
  &=2e^{-\pi iu}q^{-\frac{1}{8}}. 
\end{align}
Moreover, $h(u;\tau)$ is the unique holomorphic function which satisfies \eqref{eq: h relation 1} and \eqref{eq: h relation 2}. 
\end{lem}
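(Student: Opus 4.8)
The plan is to verify the two functional equations \eqref{eq: h relation 1} and \eqref{eq: h relation 2} by direct manipulation of the defining Mordell integral \eqref{defi: h}, and then to establish uniqueness by a Liouville-type argument on the quotient of two hypothetical solutions. First I would prove \eqref{eq: h relation 1}: writing out $h(u;\tau)+h(u+1;\tau)$ as a single integral, the integrand becomes $e^{\pi ix^2\tau}\cdot\frac{e^{-2\pi xu}(1+e^{-2\pi x})}{\cosh(\pi x)}=2e^{\pi ix^2\tau-2\pi xu-\pi x}$ after using $\cosh(\pi x)=\tfrac12 e^{\pi x}(1+e^{-2\pi x})$; so the combination collapses to a pure Gaussian integral $2\int_{-\infty}^\infty e^{\pi i\tau(x+ (iu+i/2)/\tau \cdot(\ldots))}\,dx$, which after completing the square in the exponent $\pi i\tau x^2 - \pi(2u+1)x$ and shifting the contour (justified by the decay from $\mathrm{Im}(\tau)>0$) yields $\frac{2}{\sqrt{-i\tau}}e^{\frac{\pi i}{\tau}(u+\frac12)^2}$, which is exactly the right-hand side.

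Next I would prove \eqref{eq: h relation 2} by the same mechanism but with a contour shift instead of a reflection. In $e^{-2\pi iu}q^{-1/2}h(u+\tau;\tau)$, substitute $x\mapsto x - i$ (or rather exploit that the integrand of $h(u+\tau;\tau)$ is, up to the factor $e^{2\pi iu}q^{1/2}$, the integrand of $h(u;\tau)$ evaluated along the shifted line $\mathrm{Im}(x)=-1$, using $e^{\pi i(x-i)^2\tau-2\pi(x-i)(u+\tau)}=e^{\pi ix^2\tau-2\pi xu}\cdot e^{2\pi iu}q^{1/2}$, modulo bookkeeping); the difference between integrating over $\mathbb{R}$ and over $\mathbb{R}-i$ is, by the residue theorem, $2\pi i$ times the residue of $\frac{e^{\pi ix^2\tau-2\pi xu}}{\cosh(\pi x)}$ at the pole $x=-\tfrac{i}{2}$ (where $\cosh(\pi x)=0$), and $\mathrm{Res}_{x=-i/2}\frac{1}{\cosh(\pi x)}=\frac{1}{\pi}\cdot\frac{1}{\sinh(-\pi i/2)}\cdot(\ldots)$ produces, after evaluating $e^{\pi i\tau(-i/2)^2 - 2\pi(-i/2)u}=q^{-1/8}e^{\pi iu}$ and tidying constants, the value $2e^{-\pi iu}q^{-1/8}$ on the right. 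I would be careful about the sign coming from the orientation of the contour and the direction of the shift.

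For uniqueness: suppose $h_1, h_2$ both holomorphic and both satisfy \eqref{eq: h relation 1} and \eqref{eq: h relation 2}; set $g:=h_1-h_2$. Then $g$ is holomorphic and satisfies the \emph{homogeneous} versions $g(u)+g(u+1)=0$ and $g(u)+e^{-2\pi iu}q^{-1/2}g(u+\tau)=0$. From the first relation $g(u+2)=g(u)$, and combining the two one checks that $g(u)e^{\pi iu}q^{1/8}$ (or a similar exponential normalization) is invariant under $u\mapsto u+1$ and under $u\mapsto u+\tau$ up to a controlled factor, so $g$ — or rather $g$ times a fixed theta-like factor — descends to a holomorphic function on the torus $\mathbb{C}/(2\mathbb{Z}+\tau\mathbb{Z})$ with at worst a single prescribed pole, forcing it to be a constant multiple of a ratio of theta functions whose quasi-periodicity does not match unless the constant is zero; a direct count of zeros versus poles (or a growth estimate in a fundamental domain) then gives $g\equiv 0$. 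The main obstacle I expect is the uniqueness part: making the elliptic-function argument airtight requires correctly identifying the auxiliary automorphy factor so that $g$ times that factor is genuinely doubly periodic (or doubly quasi-periodic with the quasi-periods of a single theta function), and then ruling out a nonzero solution by the theta-divisor mismatch — the functional-equation verifications themselves are routine contour manipulations once the substitutions are set up.
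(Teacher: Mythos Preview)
The paper does not supply its own proof of this lemma: it is stated with the citations \cite{Zw}, \cite{Mo} and then used as a black box. Your proposal reconstructs precisely the standard argument that appears in Zwegers' thesis (the reference \cite{Zw}): the first identity by collapsing $(1+e^{-2\pi x})/\cosh(\pi x)=2e^{-\pi x}$ to a Gaussian integral and completing the square, the second by shifting the contour from $\mathbb{R}$ to $\mathbb{R}-i$ and picking up the residue at $x=-i/2$, and uniqueness by showing that any holomorphic solution $g$ of the homogeneous system has $g\cdot\vartheta$ doubly periodic and entire, hence constant, hence zero (since $g$ is pole-free). Your sketch is correct; the only point where your write-up is loose is the uniqueness step, where the cleanest normalization is exactly $g(u)\vartheta(u;\tau)$: from \eqref{theta relation 1}, \eqref{theta relation 2} and the homogeneous versions of \eqref{eq: h relation 1}, \eqref{eq: h relation 2} one checks $g\vartheta$ is invariant under $u\mapsto u+1$ and $u\mapsto u+\tau$, so it is a holomorphic elliptic function, hence a constant $c$; if $c\neq 0$ then $g=c/\vartheta$ has poles, contradicting holomorphy, so $g\equiv 0$.
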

\begin{lem}
\label{lem: nu}
The function $\nu_{N,k}$ \eqref{defi: nu} satisfies the following formulas: 
\begin{align}
\label{lem: nu 1}
\nu_{N,k}(u_0+1,u_1,\ldots,u_N;\tau)
  &=\zeta_N^{-k}\nu_{N,k}(u_0,\ldots,u_N;\tau), 
  \\
\label{lem: nu 2}
  \nu_{N,k}(u_0+\tau,u_1,\ldots,u_N;\tau)
  &=e^{-2\pi i(v,e_1)}q^{-\frac{N-1}{2N}}\nu_{N,k+1}(u_0,\ldots,u_N;\tau), 
  \\
\label{lem: nu 3}
  \nu_{N,k}(u_0,\ldots,u_N;\tau+1)
  &=(-1)^k\zeta_{2N}^{-k^2}\nu_{N,k}(u_0,\ldots,u_N;\tau),
  \\
\label{lem: nu4}
  \nu_{N,k}\left(\frac{u_0}{\tau},\ldots,\frac{u_N}{\tau};-\frac{1}{\tau}\right)
  &=\frac{(-i\tau)^\frac{N-1}{2}}{N^\frac{1}{2}}e^{\frac{\pi i}{\tau}{}^t\! v \widehat{S}^{-1}v}\sum_{j=0}^{N-1}\zeta_N^{jk}\nu_{N,j}(u_0,\ldots,u_N;\tau).
\end{align}
\end{lem}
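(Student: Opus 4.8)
<br>

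The plan is to regard $\nu_{N,k}$ as a lattice theta series attached to the shifted lattice $\Zz^{N-1}+e_k$ and the Gram matrix $\widehat{S}$: writing $n=m+e_k$ with $m\in\Zz^{N-1}$,
$\nu_{N,k}(\mathbf{u};\tau)=\sum_{m\in\Zz^{N-1}}\exp\bigl(\pi i\,{}^t(m+e_k)\widehat{S}(m+e_k)\tau+2\pi i(v,m+e_k)\bigr)$,
and to exploit that $\widehat{S}=I_{N-1}+\mathbf{1}\,{}^t\!\mathbf{1}$ (outer product) with $\mathbf{1}={}^t[1,\ldots,1]\in\Zz^{N-1}$. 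I will use throughout the elementary identities $\widehat{S}\mathbf{1}=N\mathbf{1}$ (whence $\widehat{S}e_k=k\mathbf{1}$ and ${}^te_k\widehat{S}e_k=k^2(N-1)/N$), $\widehat{S}^{-1}=I_{N-1}-\tfrac1N\mathbf{1}\,{}^t\!\mathbf{1}$, $\widehat{S}^{-1}\mathbf{1}=\tfrac1N\mathbf{1}$, $\det\widehat{S}=N$, together with the fact that $v$ depends affine-linearly on $(u_0,\ldots,u_N)$ with $v\mapsto v+c\mathbf{1}$ under $u_0\mapsto u_0+c$ and $v\mapsto v/\tau$ under $u_j\mapsto u_j/\tau$. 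In \eqref{lem: nu 2} the index $k+1$ is read modulo $N$, consistently with $e_N=\mathbf{1}\in\Zz^{N-1}$.

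The first three formulas are then direct substitutions. For \eqref{lem: nu 1}, $u_0\mapsto u_0+1$ multiplies the summand by $e^{2\pi i(\mathbf{1},n)}$ and $(\mathbf{1},n)\equiv(\mathbf{1},e_k)=k(N-1)/N\pmod{\Zz}$, giving the factor $\zeta_N^{k(N-1)}=\zeta_N^{-k}$. For \eqref{lem: nu 2}, $u_0\mapsto u_0+\tau$ inserts $e^{2\pi i\tau(\mathbf{1},n)}$; writing ${}^t\mathbf{1}n={}^tn\widehat{S}e_1$ and completing the square, $\pi i\tau\,{}^tn\widehat{S}n+2\pi i\tau\,{}^tn\widehat{S}e_1=\pi i\tau\,{}^t(n+e_1)\widehat{S}(n+e_1)-\pi i\tau\,{}^te_1\widehat{S}e_1$, and the substitution $n\mapsto n+e_1$ sends $\Zz^{N-1}+e_k$ onto $\Zz^{N-1}+e_{k+1}$, producing the prefactor $e^{-\pi i\tau(N-1)/N-2\pi i(v,e_1)}=q^{-(N-1)/(2N)}e^{-2\pi i(v,e_1)}$. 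For \eqref{lem: nu 3}, $\tau\mapsto\tau+1$ inserts $e^{\pi i\,{}^tn\widehat{S}n}$; with $n=m+e_k$ one has ${}^tn\widehat{S}n={}^tm\widehat{S}m+2k\,{}^t\mathbf{1}m+k^2(N-1)/N$, and since ${}^tm\widehat{S}m={}^tmm+({}^t\mathbf{1}m)^2$ is even and $e^{2\pi ik\,{}^t\mathbf{1}m}=1$, only the factor $e^{\pi ik^2(N-1)/N}=(-1)^k\zeta_{2N}^{-k^2}$ survives.

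The substantial step is \eqref{lem: nu4}. I will first absorb the characteristic into the argument: $\nu_{N,k}(\mathbf{u};\tau)=e^{\pi i\tau\,{}^te_k\widehat{S}e_k+2\pi i(v,e_k)}\,\theta_{\widehat{S}}(v+k\tau\mathbf{1};\tau)$. Evaluating at $(\mathbf{u}/\tau;-1/\tau)$ turns the theta argument into $(v-k\mathbf{1})/\tau$, and \eqref{mul theta relation 4} applied in dimension $N-1$ with $S=\widehat{S}$ and $\mathbf{w}=v-k\mathbf{1}$ gives $\theta_{\widehat{S}}(\mathbf{w}/\tau;-1/\tau)=\frac{(-i\tau)^{(N-1)/2}}{\sqrt{N}}e^{\frac{\pi i}{\tau}{}^t\mathbf{w}\widehat{S}^{-1}\mathbf{w}}\,\widetilde{\theta}_{\widehat{S}}(\mathbf{w};\tau)$. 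The key computation is that in $\widetilde{\theta}_{\widehat{S}}(\mathbf{w};\tau)=\sum_{n\in\Zz^{N-1}}\exp\bigl(\pi i\tau\,{}^tn\widehat{S}^{-1}n+2\pi i\,{}^t\mathbf{w}\widehat{S}^{-1}n\bigr)$ the reindexing $n=\widehat{S}\ell$ is a bijection of $\Zz^{N-1}$ onto $\widehat{S}^{-1}\Zz^{N-1}=\Zz^{N-1}+\tfrac1N\Zz\mathbf{1}=\bigsqcup_{j=0}^{N-1}(\Zz^{N-1}+e_j)$, under which ${}^tn\widehat{S}^{-1}n={}^t\ell\widehat{S}\ell$ and ${}^t\mathbf{w}\widehat{S}^{-1}n=(v,\ell)-k\,{}^t\mathbf{1}\ell$, so the $-k\,{}^t\mathbf{1}\ell$ term contributes the character $\zeta_N^{-kj(N-1)}=\zeta_N^{kj}$ on the coset $\Zz^{N-1}+e_j$; hence $\widetilde{\theta}_{\widehat{S}}(v-k\mathbf{1};\tau)=\sum_{j=0}^{N-1}\zeta_N^{kj}\nu_{N,j}(\mathbf{u};\tau)$. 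It then remains to collect the three exponential prefactors: using $\widehat{S}^{-1}\mathbf{1}=\tfrac1N\mathbf{1}$ and $(v,e_k)=\tfrac kN\,{}^t\mathbf{1}v$ one checks ${}^t\mathbf{w}\widehat{S}^{-1}\mathbf{w}={}^tv\widehat{S}^{-1}v-2(v,e_k)+{}^te_k\widehat{S}e_k$, which makes the prefactors telescope to $\frac{(-i\tau)^{(N-1)/2}}{N^{1/2}}e^{\frac{\pi i}{\tau}{}^tv\widehat{S}^{-1}v}$. The main obstacle is exactly this last bookkeeping together with the precise verification that $\widehat{S}^{-1}\Zz^{N-1}$ decomposes into the $N$ cosets $\Zz^{N-1}+e_j$ carrying the characters $\zeta_N^{kj}$; once that identification is secured, everything else is routine.
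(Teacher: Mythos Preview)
Your proof is correct and follows essentially the same route as the paper: for \eqref{lem: nu 1}--\eqref{lem: nu 3} both you and the paper invoke direct computation from the definition, and for \eqref{lem: nu4} both arguments express $\nu_{N,k}$ as a prefactor times $\theta_{\widehat{S}}(v+k\tau\mathbf{1};\tau)$, apply the modular transformation \eqref{mul theta relation 4}, and then decompose $\widetilde{\theta}_{\widehat{S}}(v-k\mathbf{1};\tau)$ via the coset decomposition $\widehat{S}^{-1}\Zz^{N-1}=\bigsqcup_{j=0}^{N-1}(\Zz^{N-1}+e_j)$ to recover the $\sum_j\zeta_N^{jk}\nu_{N,j}$. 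Your reindexing $n=\widehat{S}\ell$ and the paper's observation that $\widehat{S}^{-1}n$ falls into the cosets $\Zz^{N-1}+e_j$ are the same step in slightly different notation, and your bookkeeping of the exponential prefactors is the same telescoping the paper performs.
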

\begin{proof}[{\bf{Proof}}]
For \eqref{lem: nu 1}--\eqref{lem: nu 3}, it is clear from the definition of $\nu_{N,k}$. 
Also, from a simple calculation, we have 
\begin{align*}
\widehat{S}^{-1}&=\begin
{bmatrix}
\frac{N-1}{N} & -\frac{1}{N} & \ldots & -\frac{1}{N}\\
-\frac{1}{N} & \frac{N-1}{N} & \ldots & -\frac{1}{N} \\
 & & \ldots & \\
-\frac{1}{N} & -\frac{1}{N} & \ldots & \frac{N-1}{N}
\end{bmatrix}
,\quad \det (\widehat{S})=N,
\end{align*}
and
\begin{align}
\label{nu to thetaS}
&\nu_{N,k}(u_0,\ldots,u_N;\tau)\nonumber\\
 &=e^{2\pi ik(u_0+u_1)-\frac{2\pi iku}{N}}q^\frac{k^2(N-1)}{2N}\theta_{\widehat{S}}(u_0+u_1-u_2+k\tau\ldots,u_0+u_1-u_N+k\tau;\tau). 
\end{align}
For $n\in\Zz^{N-1}$ and $e_k={}^t\![k/N,\ldots,k/N]$, the image $\widehat{S}^{-1} n$ is an element of the set $\bigsqcup_{k=0}^{N-1}(\Zz^{N-1}+e_k)$. 
Moreover, for $n\in\bigsqcup_{k=0}^{N-1}(\Zz^{N-1}+e_k)$, the image $\widehat{S}n$ is an element of the set $\Zz^{N-1}$. 
Namely, the function $\widetilde{\theta}_{\widehat{S}}$ is expressed as 
\begin{align}
\widetilde{\theta}_{\widehat{S}}(v_1,\ldots,v_{N-1};\tau)
  &=\sum_{k=0}^{N-1}\sum_{n\in\Zz^{N-1}+e_k}\exp(\pi i{}^t\! n\widehat{S} n\tau+2\pi i(v,n))
   \nonumber\\
\label{thetaS trans}
  &=\sum_{k=0}^{N-1}e^{2\pi i(v,e_k)}q^\frac{k^2(N-1)}{2N}\theta_{\widehat{S}}(v_1+k\tau,\ldots,v_{N-1}+k\tau;\tau). 
\end{align}
Hence, using \eqref{nu to thetaS}, \eqref{thetaS trans} and \eqref{mul theta relation 4}, we have
\begin{align*}
\nu_{N,k}\left(\frac{u_0}{\tau},\ldots,\frac{u_N}{\tau};-\frac{1}{\tau}\right)
  &=e^{\frac{2\pi ik(u_0+u_1)}{\tau}-\frac{2\pi iku}{N\tau}-\frac{\pi ik^2(N-1)}{N\tau}}\theta_{\widehat{S}}\left(\frac{v_1}{\tau}-\frac{k}{\tau},\ldots,\frac{v_{N-1}}{\tau}-\frac{k}{\tau};-\frac{1}{\tau}\right)
  \\
  &=\frac{(-i\tau)^\frac{N-1}{2}}{N^\frac{1}{2}}e^{\frac{\pi i}{\tau}{}^t\! v \widehat{S}^{-1}v}\widetilde{\theta}_{\widehat{S}}(v_1-k,\ldots,v_{N-1}-k;\tau)
  \\
  &=\frac{(-i\tau)^\frac{N-1}{2}}{N^\frac{1}{2}}e^{\frac{\pi i}{\tau}{}^t\! v \widehat{S}^{-1}v}\sum_{j=0}^{N-1}\zeta_N^{jk}e^{2\pi i(v,e_j)}q^\frac{j^2(N-1)}{2N}\theta_{\widehat{S}}(v_1+j\tau,\ldots,v_{N-1}+j\tau;\tau)
  \\
  &=\frac{(-i\tau)^\frac{N-1}{2}}{N^\frac{1}{2}}e^{\frac{\pi i}{\tau}{}^t\! v \widehat{S}^{-1}v}\sum_{j=0}^{N-1}\zeta_N^{jk}\nu_{N,j}(u_0,\ldots,u_N;\tau).
\end{align*}
\end{proof}
From Lemma \ref{lem: nu}, we obtain the following Proposition satisfied by $\Phi_N$ \eqref{defi: Phi}. 

\begin{prop}
\label{lem: PhiN}
The function $\Phi_N$ \eqref{defi: Phi} satisfies the following formulas: 
\begin{align}
\label{eq: PhiN 1}
  \Phi_N(u_0+1,u_1,\ldots,u_N;z;\tau)
  &=
  -\left[\delta_{jk}\zeta_N^{-k}\right]_{j,k=0}^{N-1}\Phi_N(u_0,\ldots,u_N;z;\tau), 
  \\
\label{eq: PhiN 2}
  \Phi_N(u_0+\tau,u_1,\ldots,u_N;z;\tau)
  &=
  -e^\frac{2\pi iu}{N}q^\frac{1}{2N}
\begin{bmatrix}
 0 & 1 & 0 & \ldots & 0\\
 0 & 0 & 1 & \ldots & 0\\
& &  \ldots  \\
 0 & 0 & 0 & \ldots & 1\\
 1 & 0 & 0 & \ldots & 0
 \end{bmatrix}
  \Phi_N(u_0,\ldots ,u_N;z;\tau), 
  \\
\label{eq: PhiN 3}
  \Phi_N(u_0,\ldots ,u_N;z;\tau+1)
  &=
  e^{-\frac{\pi iN}{4}}
\left[\delta_{jk}(-1)^k\zeta_{2n}^{-k^2}\right]_{j,k=0}^{N-1}
 \Phi_N(u_0,\ldots ,u_N;z;\tau), 
  \\
\label{eq: PhiN 4}
  \Phi_N\left(\frac{u_0}{\tau},\ldots ,\frac{u_N}{\tau};\frac{z}{\tau};-\frac{1}{\tau}\right)
  &=
  \frac{\sqrt{-i\tau}}{(-i)^{N+1}N^\frac{1}{2}}e^{-\frac{\pi iu^2}{N\tau}}[\zeta_N^{jk}]_{j,k=0}^{N-1}\Phi_N(u_0,\ldots ,u_N;z;\tau). 
\end{align}
\end{prop}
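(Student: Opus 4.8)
The plan is to split $\Phi_N$ into a scalar prefactor and a vector part and to transform each piece separately. Write $\Phi_N=P\,[\nu_{N,k}(u_0,\ldots,u_N;\tau)]_{k=0}^{N-1}$ with
\[
P=P(u_0,\ldots,u_N;z;\tau):=\frac{i\eta(\tau)^3\vartheta(z)\vartheta(z-u_0+u_1)}{\vartheta(z-u_0)\vartheta(z+u_1)\vartheta(u_0)\cdots\vartheta(u_N)}.
\]
For each of the four substitutions I would first compute the behaviour of $P$ using only the classical relations \eqref{theta relation 1}--\eqref{theta relation 4}, \eqref{eta relation 1}, \eqref{eta relation 2}, and then combine it with the behaviour of $[\nu_{N,k}]_{k=0}^{N-1}$ furnished by Lemma \ref{lem: nu}.

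For \eqref{eq: PhiN 1}, under $u_0\mapsto u_0+1$ relation \eqref{theta relation 1} flips the sign of the numerator factor $\vartheta(z-u_0+u_1)$ and of the two denominator factors $\vartheta(z-u_0)$, $\vartheta(u_0)$, so $P\mapsto-P$; together with \eqref{lem: nu 1} this produces the diagonal matrix $[\delta_{jk}\zeta_N^{-k}]$ and the minus sign. For \eqref{eq: PhiN 3}, under $\tau\mapsto\tau+1$ the factor $\eta(\tau)^3$ contributes $e^{\pi i/4}$ by \eqref{eta relation 1}, while by \eqref{theta relation 3} the two numerator thetas and the $N+3$ denominator thetas each contribute $e^{\pi i/4}$, so $P\mapsto e^{\pi i/4}e^{2\pi i/4}e^{-(N+3)\pi i/4}P=e^{-\pi iN/4}P$; with \eqref{lem: nu 3} this gives \eqref{eq: PhiN 3}. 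For \eqref{eq: PhiN 2}, under $u_0\mapsto u_0+\tau$ applying \eqref{theta relation 2} to $\vartheta(z-u_0+u_1)$, $\vartheta(z-u_0)$, $\vartheta(u_0)$ makes the $z$-dependent exponentials cancel, leaving $P\mapsto-e^{\pi i\tau+2\pi i(u_0+u_1)}P$; on the vector side \eqref{lem: nu 2} shifts the index $k\mapsto k+1$, and since $\Zz^{N-1}+e_{k+N}=\Zz^{N-1}+e_k$ one has $\nu_{N,k+N}=\nu_{N,k}$, so this is the cyclic permutation matrix in \eqref{eq: PhiN 2}. Using $\sum_{j=1}^{N-1}v_j=N(u_0+u_1)-u$, hence $(v,e_1)=(u_0+u_1)-u/N$, the factors $e^{\pm2\pi i(u_0+u_1)}$ cancel and $e^{\pi i\tau}q^{-(N-1)/(2N)}=q^{1/(2N)}$, yielding the scalar $-e^{2\pi iu/N}q^{1/(2N)}$.

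The last transformation \eqref{eq: PhiN 4} is the one I expect to require care. Under $\tau\mapsto-1/\tau$, $u_j\mapsto u_j/\tau$, $z\mapsto z/\tau$ I would apply \eqref{theta relation 4} and \eqref{eta relation 2}: the non-exponential factors $(-i\tau)^{3/2}$ from $\eta^3$, $(-i\sqrt{-i\tau})^2$ from the numerator thetas and $(-i\sqrt{-i\tau})^{-(N+3)}$ from the denominator thetas multiply to $(-i)^{-(N+1)}(-i\tau)^{1-N/2}$, and multiplying by the $(-i\tau)^{(N-1)/2}$ coming from \eqref{lem: nu4} gives the prefactor $\sqrt{-i\tau}/((-i)^{N+1}N^{1/2})$. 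The exponential parts combine into $\exp\!\big(\tfrac{\pi i}{\tau}[\,{}^t\! v\widehat{S}^{-1}v-(u_0+u_1)^2-u_2^2-\cdots-u_N^2]\big)$, and the hard part is checking that this bracket equals $-u^2/N$; I would do this by writing $\widehat{S}^{-1}=I-\tfrac1N J$ with $J$ the $(N-1)\times(N-1)$ all-ones matrix, so that ${}^t\! v\widehat{S}^{-1}v=\sum_j v_j^2-\tfrac1N(\sum_j v_j)^2$, and substituting $v_j=u_0+u_1-u_{j+1}$, $\sum_j v_j=N(u_0+u_1)-u$. Together with the matrix $[\zeta_N^{jk}]$ from \eqref{lem: nu4} this gives \eqref{eq: PhiN 4}. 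The main obstacle throughout is thus this bookkeeping of quadratic exponents and roots of unity in the modular case; the other three cases reduce to short sign computations once $P$ has been peeled off the vector.
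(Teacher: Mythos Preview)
Your proposal is correct and follows exactly the route the paper indicates: the paper does not spell out a proof but simply states that the proposition follows from Lemma~\ref{lem: nu}, i.e.\ from the transformation behaviour of $[\nu_{N,k}]_k$ combined with the classical relations \eqref{theta relation 1}--\eqref{theta relation 4}, \eqref{eta relation 1}, \eqref{eta relation 2} applied to the scalar prefactor $P$. Your detailed bookkeeping---in particular the computation $z^2+(z-u_0+u_1)^2-(z-u_0)^2-(z+u_1)^2=-2u_0u_1$ and the identity ${}^t v\widehat{S}^{-1}v=(u_0+u_1)^2+\sum_{j\ge 2}u_j^2-u^2/N$ obtained from $\widehat{S}^{-1}=I-\tfrac1N J$---is precisely what is needed to make the paper's one-line assertion explicit.
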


From Lemma \ref{lem: h relation} and Proposition \ref{lem: PhiN}, we prove modular transformations satisfied by $M_N$.
\begin{proof}[{\bf{Proof of Theorem $\ref{thm: MN modular}$}}]
\eqref{eq: MN modular 1}: It is obvious from a simple following relation:
\begin{align*}
\mu_N(u_0+k(\tau+1),u_1,\ldots,u_N;\tau+1)=(-1)^ke^{-\frac{\pi iN}{4}}\mu_N(u_0+k\tau,u_1,\ldots,u_N;\tau). 
\end{align*}
\eqref{eq: MN modular 2}: From \eqref{eq: PhiN 4} and \eqref{eq: MN 3}, the function 
\begin{align}
\label{defi: HNtilde}
\frac{(-i)^{N+1}e^\frac{\pi iu^2}{N\tau}}{N^\frac{1}{2}\sqrt{-i\tau}}\left[\zeta_N^{-jk}\right]_{j,k=0}^{N-1}M_N\left(\frac{u_0}{\tau},\ldots,\frac{u_N}{\tau};-\frac{1}{\tau}\right)-M_N(u_0,\ldots,u_N;\tau)
\end{align}
is a one variable function with respect to $u=u_0+\cdots+u_N$. 
We define this function \eqref{defi: HNtilde} as $\widetilde{H}_N(u;\tau)$. 
From \eqref{mun relation 1} and \eqref{mun relation 2}, 
the function $\widetilde{H}_N$ satisfies the following relations: 
\begin{align}
\label{eq: tH relation 1}
\widetilde{H}_N(u+N\tau)-(-1)^Ne^{2\pi iu}q^\frac{N}{2}\widetilde{H}_N(u;\tau)
  &=-i^Ne^{\pi iu}q^\frac{3N}{8}\left[(-1)^ke^{-\frac{2\pi iku}{N}}q^{-\frac{k(k+N)}{2N}}\right]_{k=0}^{N-1}, \\
\label{eq: tH relation 2}
\widetilde{H}_N(u+N;\tau)-(-1)^N\widetilde{H}_N(u;\tau)
  &=\frac{(-1)^Ni}{N^\frac{1}{2}\sqrt{-i\tau}}\left[\zeta_N^{-jk}\right]_{j,k=0}^{N-1}\left[(-1)^ke^{\frac{\pi i}{N\tau}(u-k+\frac{N}{2})^2}\right]_{k=0}^{N-1}. 
\end{align}

On the other hand, the right hand side of the equation \eqref{eq: MN modular 2} satisfies the above relations \eqref{eq: tH relation 1} and \eqref{eq: tH relation 2}. 
In fact, putting
\begin{align*}
H_N(u;\tau):=\left[(-1)^ke^{-\frac{2\pi iku}{N}}q^{-\frac{k^2}{2N}}h\left(u+k\tau-\frac{N-1}{2};N\tau\right)\right]_{k=0}^{N-1}, 
\end{align*}
the function $H_N$ satisfies the following relations from \eqref{eq: h relation 1} and \eqref{eq: h relation 2}: 
\begin{align}
\label{eq: HN relation 1}
H_N(u+N\tau;\tau)-(-1)^Ne^{2\pi iu}q^\frac{N}{2}H_N(u;\tau)&=-2(-i)^Ne^{\pi iu}q^\frac{3N}{8}\left[(-1)^ke^{-\frac{2\pi iku}{N}}q^{-\frac{k}{2}-\frac{k^2}{2N}}\right]_{k=0}^{N-1},\\
\label{eq: HN relation 2}
H_N(u+N;\tau)-(-1)^NH_N(u;\tau)&=\frac{2}{N^\frac{1}{2}\sqrt{-i\tau}}\left[\sum_{j=0}^{N-1}(-1)^j\zeta_N^{-jk}e^{\frac{\pi i}{N\tau}(u-j+\frac{N}{2})^2}\right]_{k=0}^{N-1}.
\end{align}
Moreover, $H_N$ is the unique holomorphic function which satisfies the equations \eqref{eq: HN relation 1} and \eqref{eq: HN relation 2}. 
Therefore, we obtain
\begin{align*}
\widetilde{H}_N(u;\tau)=\frac{(-1)^Ni}{2}H_N(u;\tau).
\end{align*}
\end{proof}
\begin{rmk}
Finding the appropriate Mordell integral $H_N(u;\tau)$ from the difference equations \eqref{eq: tH relation 1} and \eqref{eq: tH relation 2} may be seem difficult from this proof, 
but we can find it by the following heuristic argument. 
Since the right hand side of \eqref{eq: tH relation 2} is rewritten as 
\begin{align*}
(-1)^Ni\left[\zeta_N^{-jk}\right]_{j,k=0}^{N-1}\left[\int_{\Rz}(-1)^k e^{\pi iN\tau x^2-2\pi x(u-k+\frac{N}{2})}dx\right]_{k=0}^{N-1}, 
\end{align*}
the function $\widetilde{H}_N$ satisfies the following relation for $m\in\Zz_{\geq0}$: 
\begin{align*}
&\widetilde{H}_N(u;\tau)-(-1)^{mN}\widetilde{H}_N(u+mN;\tau)\\
  &\qquad=-\frac{e^{-\pi iu}q^{-\frac{N}{8}}}{2i^{N-1}}\left[\zeta_N^{-jk}\right]_{j,k=0}^{N-1}\left[\int_{\Rz-\frac{i}{2}}\frac{e^{\pi iN\tau x^2-2\pi x(u-k+\frac{N\tau}{2})}}{\sinh(\pi Nx)}\left(1-e^{-2\pi mNx}\right)\right]_{k=0}^{N-1}.
\end{align*}
Based on the integral of the right hand side, we derive the following vector valued function whose components are Mordell integrals for a sufficiently small number $\varepsilon>0$: 
\begin{align}
\label{func: vector valued Mordell integral}
\left[\zeta_N^{-jk}\right]_{j,k=0}^{N-1}\left[\int_{\Rz-\varepsilon i}\frac{e^{\pi iNx^2\tau-2\pi x(u-k+\frac{N\tau}{2})}}{\sinh(\pi Nx)}dx\right]_{k=0}^{N-1}.
\end{align}
Also, since 
\begin{align*}
h(u;N\tau)&=-2ie^{-\pi iu}q^{-\frac{N}{8}}\int_{\Rz-\varepsilon i}\frac{e^{\pi iNx^2\tau-2\pi x(u+\frac{N\tau}{2})}}{e^{\pi x}-e^{-\pi x}}dx, \\
\frac{1}{a-a^{-1}}&=\frac{1}{a^N-a^{-N}}\sum_{j=0}^{N-1}a^{2j-N+1},
\end{align*}
we have
\begin{align*}
h\left(u+k\tau-\frac{N-1}{2};N\tau\right)=(-1)^{k+1}i^Ne^{-\pi iu+\frac{2\pi iku}{N}}q^{\frac{k^2}{2N}-\frac{N}{8}}\sum_{j=0}^{N-1}\zeta_{N}^{-jk}\int_{\Rz-\varepsilon i}\frac{e^{\pi iNx^2\tau-2\pi x(u-j+\frac{N\tau}{2})}}{\sinh(\pi N x)}dx. 
\end{align*}
Therefore, we get
\begin{align*}
-e^{-\pi iu}q^{-\frac{N}{8}}\left[\zeta_N^{-jk}\right]_{j,k=0}^{N-1}\left[\int_{\Rz-\varepsilon i}\frac{e^{\pi iNx^2\tau-2\pi x(u-k+\frac{N\tau}{2})}}{\sinh(\pi Nx)}dx\right]_{k=0}^{N-1}=(-i)^NH_N(u;\tau), 
\end{align*}
 and obtain the appropriate Mordell integral. 
\end{rmk}

Next, we give two lemmas to prove Theorem \ref{thm: modular completion}. 
\begin{lem}[\cite{Zw}]
\label{lem: R transformation}
The function $R(u;\tau)$ \eqref{defi: R} has the following formulas: 
\begin{align}
\label{eq: R 1}
  R(u+1;\tau)
  &=
  -R(u), 
  \\
\label{eq: R 2}
R(u+\tau;\tau)
&=
-e^{2\pi iu}q^\frac{1}{2}R(u;\tau)+2e^{\pi iu}q^\frac{3}{8}, 
\\
\label{eq: R 3}
  R(u;\tau+1)
  &=
  e^{-\frac{\pi i}{4}}R(u;\tau), 
  \\
\label{eq: R 4}
  \frac{e^{\frac{\pi iu^2}{\tau}}}{\sqrt{-i\tau}}R\left(\frac{u}{\tau};-\frac{1}{\tau}\right)
  &=-R(u;\tau)+h(u;\tau). 
\end{align}
\end{lem}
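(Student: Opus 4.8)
The statement to prove is Lemma \ref{lem: R transformation}, listing the four transformation formulas \eqref{eq: R 1}--\eqref{eq: R 4} for Zwegers' $R$-function. This is attributed to \cite{Zw}, so the proof is essentially a recollection of standard arguments from Zwegers' thesis. Let me sketch how I would prove each.

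\textbf{Plan.} The function is
$$R(u;\tau)=\sum_{\nu\in\Zz+\frac12}\bigl\{\sgn(\nu)-E((\nu+a)\sqrt{2t})\bigr\}(-1)^{\nu-\frac12}e^{-2\pi i\nu u}q^{-\nu^2/2},$$
with $t=\mathrm{Im}(\tau)$, $a=\mathrm{Im}(u)/\mathrm{Im}(\tau)$. Note $a$ depends on $u$, so shifting $u$ also shifts $a$.

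\eqref{eq: R 1}: Replace $u$ by $u+1$. Then $a$ is unchanged (real shift), and $e^{-2\pi i\nu(u+1)} = e^{-2\pi i\nu u}e^{-2\pi i\nu}$. Since $\nu\in\Zz+\frac12$, $e^{-2\pi i\nu}=-1$. This gives a global factor $-1$, so $R(u+1)=-R(u)$.

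\eqref{eq: R 2}: Replace $u$ by $u+\tau$. Now $a\mapsto a+1$, and $e^{-2\pi i\nu(u+\tau)}=e^{-2\pi i\nu u}q^{-\nu}$. So $R(u+\tau;\tau)=\sum_\nu\{\sgn(\nu)-E((\nu+1+a)\sqrt{2t})\}(-1)^{\nu-1/2}e^{-2\pi i\nu u}q^{-\nu^2/2-\nu}$. Complete the square: $-\nu^2/2-\nu = -(\nu+1)^2/2 + 1/2$. Reindex $\mu=\nu+1\in\Zz+\frac12$: then $(-1)^{\nu-1/2} = (-1)^{\mu-1-1/2}=-(-1)^{\mu-1/2}$ and $e^{-2\pi i\nu u}=e^{-2\pi i(\mu-1)u}=e^{-2\pi i\mu u}e^{2\pi iu}$. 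So we get $-e^{2\pi iu}q^{1/2}\sum_\mu\{\sgn(\mu-1)-E((\mu+a)\sqrt{2t})\}(-1)^{\mu-1/2}e^{-2\pi i\mu u}q^{-\mu^2/2}$. The difference from $R(u;\tau)$ comes from $\sgn(\mu-1)-\sgn(\mu)$, which is $-2$ exactly when $\mu=\frac12$ (since $\mu\in\Zz+\frac12$, $\sgn(\mu-1)\ne\sgn(\mu)$ only at $\mu=\frac12$: there $\sgn(-\frac12)-\sgn(\frac12)=-2$). Plugging $\mu=\frac12$ into the remaining factors: $(-1)^0=1$, $e^{-\pi iu}$, $q^{-1/8}$, times $-2$, times the prefactor $-e^{2\pi iu}q^{1/2}$ gives $+2e^{\pi iu}q^{3/8}$. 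Hence \eqref{eq: R 2}.

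\eqref{eq: R 3}: Replace $\tau$ by $\tau+1$: $t$ and $a$ unchanged, $q^{-\nu^2/2}\mapsto e^{-\pi i\nu^2}q^{-\nu^2/2}$. For $\nu\in\Zz+\frac12$, write $\nu=m+\frac12$; then $\nu^2 = m^2+m+\frac14$, so $e^{-\pi i\nu^2}=e^{-\pi i(m^2+m)}e^{-\pi i/4}=e^{-\pi i/4}$ since $m^2+m$ is even. This gives a global $e^{-\pi i/4}$, so \eqref{eq: R 3}.

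\eqref{eq: R 4}: This is the substantive one. The standard route is to express $R$ via a Mordell-type integral. Using the integral representation $E(z)=\sgn(z)\bigl(1-\frac{1}{\pi}\int_{\Rz}\frac{e^{-\pi(z^2+\cdots)}}{\cdots}\bigr)$ — more precisely, there is a known identity (Zwegers, Lemma 1.17 in his thesis) writing
$$R(u;\tau)=-\int_{\Rz+i(-a)}\frac{e^{\pi i\tau x^2 - 2\pi ux}}{\cosh(\pi x)}\,dx$$
(up to constants), i.e. $R(u;\tau)$ equals a shifted Mordell integral. Then apply the known modular transformation of $h$: from $h(u;\tau)=\int_\Rz \frac{e^{\pi ix^2\tau-2\pi xu}}{\cosh\pi x}dx$ and the shift-of-contour formula relating $R$ to $h$, one derives \eqref{eq: R 4} directly from the transformation $h(u/\tau;-1/\tau) = \sqrt{-i\tau}\,e^{-\pi iu^2/\tau}(h(u;\tau)-\text{something})$ — except this relation itself needs care. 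Actually the cleanest approach, which I would use, is the \emph{uniqueness} argument: define $g(u;\tau):=\frac{e^{\pi iu^2/\tau}}{\sqrt{-i\tau}}R(u/\tau;-1/\tau)+R(u;\tau)$ and show using \eqref{eq: R 1}, \eqref{eq: R 2} (applied to both terms, with the factor $e^{\pi iu^2/\tau}/\sqrt{-i\tau}$ transforming appropriately under $u\mapsto u+1$ and $u\mapsto u+\tau$) that $g$ satisfies the same two functional equations \eqref{eq: h relation 1}, \eqref{eq: h relation 2} that characterize $h$; by the uniqueness clause of Lemma \ref{lem: h relation}, $g=h$, giving \eqref{eq: R 4}. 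One must also check $g$ is holomorphic in $u$ — this requires knowing $R$ is real-analytic in $u$ and the combination is holomorphic, which is part of Zwegers' original analysis (the non-holomorphic pieces cancel).

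\textbf{Main obstacle.} The first three identities are mechanical reindexing and completing the square — routine. The real work is \eqref{eq: R 4}. The hard part will be establishing holomorphy of the combination $g(u;\tau) = e^{\pi iu^2/\tau}(-i\tau)^{-1/2}R(u/\tau;-1/\tau)+R(u;\tau)$ and verifying it satisfies the two defining functional equations of $h$ with the correct constants; this requires either the Mordell integral representation of $R$ or a careful direct manipulation of the error-function series together with the Gaussian transformation $\int e^{-\pi x^2}dx$-type identities. Since the statement is cited from \cite{Zw}, I would invoke Zwegers' proof for \eqref{eq: R 4} and only spell out \eqref{eq: R 1}--\eqref{eq: R 3} in detail, or present the uniqueness argument for \eqref{eq: R 4} assuming the (standard) holomorphy of $g$.
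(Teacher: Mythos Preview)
Your sketch is correct and is essentially the standard argument from Zwegers' thesis; in particular, the reindexing proofs of \eqref{eq: R 1}--\eqref{eq: R 3} are exactly right, and the uniqueness route via Lemma~\ref{lem: h relation} is the natural way to obtain \eqref{eq: R 4}. Note, however, that the paper itself gives no proof of this lemma at all --- it is simply quoted from \cite{Zw} --- so there is nothing to compare your approach against here.
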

\begin{lem}
\label{lem: R relation}
We have 
\begin{align*}
&(-i)^{N+1}e^{-\pi i(N-1)u}q^{\frac{(N-1)^2}{8N}}R\left(u;\frac{\tau}{N}\right)\\
&=\sum_{k=0}^{N-1}(-1)^ke^{-2\pi iku}q^{-\frac{k(k-N+1)}{2N}}R\left(Nu+k\tau-\frac{N-1}{2}\tau+\frac{N+1}{2};N\tau\right). 
\end{align*}
\end{lem}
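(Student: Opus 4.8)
The plan is to prove this identity by the standard Zwegers-type strategy: show that both sides are holomorphic functions of $u$ satisfying the \emph{same} pair of first-order functional equations (one for $u\mapsto u+1$ and one for $u\mapsto u+\tau/N$), and then invoke a uniqueness statement analogous to the one in Lemma \ref{lem: h relation}. Write $L(u;\tau)$ for the left-hand side and $\mathcal{R}(u;\tau)$ for the right-hand side. Both are manifestly holomorphic in $u$ away from the relevant lattice (in fact $R(u;\tau)$ is entire in $u$, so there is no pole issue here), so the argument reduces to matching the two functional equations and checking one normalization.

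First I would compute the $u\mapsto u+1$ behaviour of each side using \eqref{eq: R 1}, i.e. $R(u+1;\cdot)=-R(u;\cdot)$. On the left, $u\mapsto u+1$ produces a factor $(-i)^{N+1}e^{-\pi i(N-1)}\cdot(-1)=(-i)^{N+1}\cdot(-1)^{N}\cdot(-1)$ times $L(u;\tau)$, i.e. an overall scalar; on the right, each summand picks up $e^{-2\pi ik}\cdot(-1)$ from $R(Nu+\cdots+\frac{N+1}{2})\mapsto R(Nu+N+\cdots)$, which after absorbing $N$ applications of \eqref{eq: R 1} gives $(-1)^N$, again a $k$-independent scalar. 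So the first functional equation is: both sides are multiplied by the same explicit root of unity; I would record this constant and check the two match. Next, the harder shift: apply $u\mapsto u+\tau/N$ to the left-hand side. By \eqref{eq: R 2} with modulus $\tau/N$ we get $R(u+\tau/N;\tau/N)=-e^{2\pi iu}q^{1/N\cdot\frac12}R(u;\tau/N)+2e^{\pi iu}q^{\frac{1}{N}\cdot\frac38}$, and the prefactors $e^{-\pi i(N-1)u}q^{(N-1)^2/(8N)}$ transform multiplicatively; on the right-hand side, $u\mapsto u+\tau/N$ sends the $k$-th argument $Nu+k\tau-\frac{N-1}{2}\tau+\frac{N+1}{2}$ to $Nu+(k+1)\tau-\frac{N-1}{2}\tau+\frac{N+1}{2}$, i.e. it cyclically relabels $k\to k+1$, and the boundary term $k=N-1\to k=N$ is brought back by \eqref{eq: R 2} (with modulus $N\tau$), producing the inhomogeneous term $2e^{\pi i(\cdots)}q^{3N/8}$. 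The bookkeeping of the prefactors $(-1)^k e^{-2\pi iku}q^{-k(k-N+1)/2N}$ under $k\to k+1$ is the part that needs care, but it is a finite telescoping/geometric computation and should reproduce exactly the inhomogeneous term $2e^{\pi iu}q^{\cdots}$ that $L$ produced.

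The main obstacle will be the second functional equation: verifying that the inhomogeneous pieces on the two sides agree requires carefully matching the Gaussian exponents $q^{-k(k-N+1)/2N}$ across the cyclic shift $k\mapsto k+1$ and the one $q^{3N/8}$ vs $q^{3/(8N)}$ normalizations, and keeping the powers of $(-i)$, $(-1)$ and the linear-in-$u$ exponentials consistent — this is precisely the kind of computation where a sign or a power of $q$ is easy to lose. A clean way to organize it is to rescale $v:=Nu$, $T:=N\tau$ and observe that the right-hand side is, up to the scalar prefactor $(-i)^{N+1}e^{-\pi i(N-1)u}q^{(N-1)^2/8N}$, built from $R$-values at arithmetic-progression shifts of $v$ in modulus $T$; then the identity says that this weighted sum of ``$N$-dissection pieces'' of $R(\cdot;T)$ reassembles into $R(v/N;T/N)$, which is the natural level-lowering statement for $R$ parallel to how $\nu_{N,k}$ dissects $\widetilde\theta_{\widehat S}$ in Lemma \ref{lem: nu}. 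Once both functional equations are matched, the final step is to verify one special value (e.g. compare constant terms in a $q$-expansion, or evaluate at a convenient $u$) to fix the additive/multiplicative ambiguity, using the uniqueness clause in Lemma \ref{lem: h relation} (or an analogous uniqueness statement for functions satisfying these two $R$-type recursions, since the difference of the two sides would then satisfy a homogeneous version with a unique bounded holomorphic solution, namely $0$).
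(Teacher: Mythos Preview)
Your approach via functional equations and a uniqueness argument is genuinely different from the paper's, which proceeds by a direct $N$-dissection of the defining series for $R$: the paper writes each $\nu\in\Zz+\tfrac12$ as $\nu=Nn+k+\tfrac12$ with $0\le k\le N-1$ and $n\in\Zz$, regroups the terms of $R(u;\tau)$ accordingly, recognizes each $k$-piece as an $R$-value at modulus $N^2\tau$, and then substitutes $\tau\mapsto\tau/N$. This is a pure series manipulation and requires no analytic input beyond the definition.

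Your strategy, by contrast, has a genuine gap at the uniqueness step. You assert that ``$R(u;\tau)$ is entire in $u$,'' but this is false: by its definition \eqref{defi: R} the summand contains $E((\nu+a)\sqrt{2t})$ with $a=\mathrm{Im}(u)/\mathrm{Im}(\tau)$, so $R(u;\tau)$ is only real-analytic in $u$, not holomorphic. The uniqueness clause in Lemma \ref{lem: h relation} is stated and proved for \emph{holomorphic} solutions of the pair of recursions, and holomorphy is essential there (a holomorphic function satisfying the homogeneous recursions is bounded on a fundamental domain, hence constant, hence zero). For a merely real-analytic difference $L-\mathcal{R}$ satisfying the homogeneous versions of your two functional equations no such conclusion is available. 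One could try to rescue the argument by first proving that the non-holomorphic parts of $L$ and $\mathcal{R}$ coincide, so that $L-\mathcal{R}$ is in fact holomorphic; but that amounts to establishing a dissection identity for the error-function part of $R$ which is of the same shape as the lemma itself, and at that point the paper's direct dissection is both shorter and more transparent.
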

\begin{proof}[{\bf{Proof}}]
By the straight forward calculation, we have 
\begin{align*}
R(u;\tau)
    &=\sum_{k=0}^{N-1}\sum_{n\in\Zz}\left\{\sgn\left(Nn+k+\frac{1}{2}\right)-E\left(\left(Nn+k+\frac{1}{2}+a\right)\sqrt{2y}\right)\right\}
    \\
    &\quad \times(-1)^{Nn+k}e^{-2\pi i\left(Nn+k+\frac{1}{2}\right)u}q^{-\frac{1}{2}\left(Nn+k+\frac{1}{2}\right)^2}
    \\
    &=i^{N+1}e^{\pi i(N-1)u}q^{-\frac{(N-1)^2}{8}}\sum_{k=0}^{N-1}(-1)^ke^{-2\pi iku}q^{-\frac{k(k-N+1)}{2}}\sum_{n\in\Zz}
    \left\{\sgn\left(Nn+k+\frac{1}{2}\right)\right.\\
    &\quad\left.-E\left(\left(Nn+k+\frac{1}{2}+a\right)\sqrt{2y}\right)\right\}
    (-1)^ne^{-2\pi i(n+\frac{1}{2})(Nu+N(k-\frac{N-1}{2})\tau+\frac{N+1}{2})}q^{-\frac{N^2}{2}(n+\frac{1}{2})^2}
    \\
    &=i^{N+1}e^{\pi i(N-1)u}q^{-\frac{(N-1)^2}{8}}\sum_{k=0}^{N-1}(-1)^ke^{-2\pi iku}q^{-\frac{k(k-N+1)}{2}}\\
    &\quad\times R\left(Nu+\frac{N(2k-N+1)}{2}\tau+\frac{N+1}{2};N^2\tau\right).
\end{align*}
Therefore, by putting $\tau\mapsto\tau/N$, we obtain the conclusion. 
\end{proof}
By using the Lemma \ref{lem: R transformation} and Lemma \ref{lem: R relation}, we prove the modular completion of $M_N$ \eqref{modular completion of MN}. 
\begin{proof}[{\bf{Proof of Theorem $\ref{thm: modular completion}$}}]
\eqref{eq: MN completion 1}--\eqref{eq: MN completion 3}: These formulas follow from \eqref{eq: MN 1}, \eqref{eq: MN 2}, \eqref{eq: MN modular 1} and \eqref{eq: R 1}--\eqref{eq: R 3}. \\
\eqref{eq: MN completion 4}: 
It is enough to show that 
\begin{align}
&\left[(-1)^{k}e^{-\frac{2\pi iku}{N}}q^{-\frac{k^2}{2N}}\left(R\left(u+k\tau+\frac{N+1}{2};N\tau\right)-(-1)^Nh\left(u+k\tau-\frac{N-1}{2};N\tau\right)\right)\right]_{k=0}^{N-1}\nonumber\\
\label{H and R}
  &=\frac{(-i)^{N+1}e^\frac{\pi iu^2}{N\tau}}{N^\frac{1}{2}\sqrt{-i\tau}}\left[\zeta_{N}^{-jk}\right]_{j,k=0}^{N-1}\left[(-1)^ke^{-\frac{2\pi iku}{N\tau}+\frac{\pi ik^2}{N\tau}}R\left(\frac{u}{\tau}-\frac{k}{\tau}+\frac{N+1}{2};-\frac{N}{\tau}\right)\right]_{k=0}^{N-1}.
\end{align}
The formula \eqref{H and R} is derived from the following two equations: 
\begin{align*}
&
h\left(u+k\tau-\frac{N-1}{2};N\tau\right)\\
    &=
    R\left(u+k\tau-\frac{N-1}{2};N\tau\right)
    +
    \frac{e^{\frac{\pi i}{N\tau}(u+k\tau-\frac{N-1}{2})^2}}{N^\frac{1}{2}\sqrt{-i\tau}}R\left(\frac{u+k\tau-\frac{N-1}{2}}{N\tau};N\tau\right), 
    \\
      &e^{\frac{\pi i}{N\tau}(u+k\tau-\frac{N-1}{2})^2}R\left(\frac{u+k\tau-\frac{N-1}{2}}{N\tau};N\tau\right)
  \\
  &=(-1)^ki^{N+1}e^{\frac{\pi iu^2}{N\tau}+\frac{2\pi iku}{N}}q^\frac{k^2}{2N}\sum_{j=0}^{N-1}(-1)^j\zeta_N^{-jk}e^{-\frac{2\pi iju}{N\tau}+\frac{\pi ij^2}{N\tau}}R\left(\frac{u}{\tau}-\frac{j}{\tau}+\frac{N+1}{2};-\frac{N}{\tau}\right). 
\end{align*}
The first one follows from the equation \eqref{eq: R 4} of Lemma \ref{lem: R transformation}, and the second one follows from Lemma \ref{lem: R relation}. 
\end{proof}

\section{Modular transformations of the multivariable $\mu$-function}
\label{sec: 4}
In this section, we give some modular transformations of the multivariable $\mu$-function as applications of Theorem \ref{thm: MN modular} and Theorem \ref{thm: modular completion}. 
\subsection{The case of odd $N$}
In this subsection, we assume that $N$ is a positive odd number.  
\begin{coro} 
\label{coro: odd}
We define
\begin{align}
\label{eq: modular completion of muN}
\widetilde{\mu}_N(u_0,\ldots,u_N;\tau):=\mu_N(u_0,\ldots,u_N;\tau)+\frac{i}{2}R\left(u+\frac{N+1}{2};N\tau\right). 
\end{align}
Then, for $m\in\Zz$, the function $\widetilde{\mu}_N$ satisfies the following relations: 
\begin{align}
\label{eq: odd trans 1}
  e^\frac{\pi imN}{4}\widetilde{\mu}_N(u_0,\ldots,u_N;\tau+m)
  &=
  \widetilde{\mu}_N(u_0,\ldots,u_N;\tau), 
  \\
\label{eq: odd trans 2}
  \frac{i^\frac{m}{2}e^\frac{\pi imu^2}{mN\tau-1}}{\sqrt{-mN\tau+1}}
  \widetilde{\mu}_N\left(\frac{u_0}{mN\tau-1},\ldots,\frac{u_N}{mN\tau-1};-\frac{\tau}{mN\tau-1}\right)
  &=
  \widetilde{\mu}_N(u_0,\ldots,u_N;\tau). 
\end{align}
\end{coro}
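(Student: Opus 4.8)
The plan is to derive Corollary \ref{coro: odd} from Theorem \ref{thm: MN modular} and Theorem \ref{thm: modular completion} by specializing to odd $N$, where the vector valued function $\widetilde{M}_N$ collapses (in an appropriate sense) to its single scalar combination built from $\widetilde{\mu}_N$. The first step is to observe that, since $N$ is odd, the $0$-th component of $\widetilde{M}_N$ is $\mu_N(u_0,\ldots,u_N;\tau)+\frac{i}{2}R(u+\frac{N+1}{2};N\tau)=\widetilde{\mu}_N(u_0,\ldots,u_N;\tau)$, so I want to extract the equations satisfied by this component from the matrix identities \eqref{eq: MN completion 3} and \eqref{eq: MN completion 4}. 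Concretely, I would look at how the diagonal matrix $[\delta_{jk}(-1)^k\zeta_{2N}^{k^2}]$ and the DFT-type matrix $[\zeta_N^{-jk}]$ act, and track the first row/column; the point of oddness is that $(-1)^k\zeta_{2N}^{k^2}$ and the various sign factors simplify so that iterating the $\tau\mapsto\tau+1$ relation $m$ times and combining with the $\tau\mapsto-1/\tau$ relation produces a clean scalar functional equation rather than a genuinely vector valued one.

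The second step is to assemble the general modular group element. The two generators give $\widetilde{\mu}_N(\tau+1)$ and $\widetilde{\mu}_N(-1/\tau)$ transformation laws (with the prefactors $e^{\pi iN/4}$ and $\frac{(-i)^{N+1}e^{\pi iu^2/(N\tau)}}{N^{1/2}\sqrt{-i\tau}}$ respectively, restricted to the relevant component), and I would compose them to reach the element $\begin{pmatrix}1 & 0\\ mN & 1\end{pmatrix}$ acting on $\tau$, i.e. $\tau\mapsto\frac{\tau}{mN\tau+1}$, or its inverse $\tau\mapsto-\frac{\tau}{mN\tau-1}$ as written in \eqref{eq: odd trans 2}. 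This is the standard $T^a S T^b S \cdots$ decomposition; the bookkeeping to do is: (i) verify the cocycle/automorphy factor multiplies correctly to give $\frac{i^{m/2}e^{\pi imu^2/(mN\tau-1)}}{\sqrt{-mN\tau+1}}$, paying attention to the square-root branch (this is where the $i^{m/2}$ and the sign inside $\sqrt{-mN\tau+1}$ come from), and (ii) confirm that the argument scaling $u_j\mapsto \frac{u_j}{mN\tau-1}$ is consistent with the composition of the two $u$-scalings $u_j\mapsto u_j/\tau$ and the translations (which do not touch $u_j$). For \eqref{eq: odd trans 1} this is just iterating \eqref{eq: MN completion 3}'s scalar shadow $m$ times, noting $(e^{\pi iN/4})^m = e^{\pi imN/4}$.

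The third step is to handle the extra parameter/argument shift: the modular completion $R_N(u;\tau)$ appearing in Theorem \ref{thm: modular completion} has $k$-th component $(-1)^ke^{-2\pi iku/N}q^{-k^2/(2N)}R(u+k\tau+\frac{N+1}{2};N\tau)$, whose $k=0$ entry is exactly $R(u+\frac{N+1}{2};N\tau)$, matching \eqref{eq: modular completion of muN}; I should double-check that under the matrix action the first component of $R_N$ transforms into the first component of the transformed $R_N$ without picking up contributions from $k\neq 0$ components — again this uses oddness of $N$ so that the relevant row of $[\zeta_N^{-jk}]$ combined with the structure of $R_N$ closes up. The main obstacle I anticipate is precisely this decoupling claim: a priori $\widetilde{M}_N$ is genuinely vector valued and the matrices mix components, so it is not automatic that the scalar $\widetilde{\mu}_N$ satisfies a closed modular equation; the work is to show that for odd $N$ the orbit of the first basis vector under the group generated by those two matrices stays (up to scalars) one-dimensional along the directions that matter, or equivalently that the composite group element $\begin{pmatrix}1&0\\mN&1\end{pmatrix}$ acts by a scalar matrix on $\widetilde{M}_N$ — for this I would compute the product of the two matrices raised to the relevant pattern and check it is a scalar multiple of the identity (or at least diagonal with equal relevant entry), using Gauss-sum evaluations of $\sum_k \zeta_N^{-jk}(-1)^k\zeta_{2N}^{k^2}$. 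Once that matrix computation is done, \eqref{eq: odd trans 1} and \eqref{eq: odd trans 2} drop out by reading off the first component and collecting the automorphy factors.
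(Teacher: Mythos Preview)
Your approach is correct and matches the paper's: extract $\widetilde{\mu}_N$ as the $k=0$ component of $\widetilde{M}_N$, compose \eqref{eq: MN completion 3} and \eqref{eq: MN completion 4} as $S\,T^{mN}S$, and check that the first row of the resulting matrix $Z_NW_N^{mN}Z_N$ (with $Z_N=[\zeta_N^{-jk}]$, $W_N=[\delta_{jk}(-1)^k\zeta_{2N}^{k^2}]$) is $(N,0,\ldots,0)$. One simplification you may not have anticipated: with $N$ odd and exponent $mN$, the $(0,k)$ entry $\sum_{l}(-1)^{mNl}\zeta_{2N}^{mNl^2-2lk}$ collapses to the elementary character sum $\sum_{l=0}^{N-1}\zeta_N^{-lk}$, so no genuine Gauss-sum evaluation is needed.
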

\begin{proof}[{\bf{Proof}}]
\eqref{eq: odd trans 1}: It is clear from \eqref{eq: MN completion 3}. \\
\eqref{eq: odd trans 2}: We put $W_N:=\left[\delta_{jk}(-1)^k\zeta_{2N}^{k^2}\right]_{j,k=0}^{N-1}$ and $Z_N:=\left[\zeta_N^{-jk}\right]_{j,k=0}^{N-1}$. 
Regardless of the parity of $N$, we have
\begin{align}
&\widetilde{M}_N(u_0,\ldots,u_N;\tau)\nonumber\\
\label{eq: modular STS}
  &=
  \frac{(-1)^{N+1}e^\frac{\pi imN}{4}}{N\sqrt{-m\tau+1}}e^\frac{\pi imu^2}{N(m\tau-1)}Z_NW_N^mZ_N\widetilde{M}_N\left(\frac{u_0}{m\tau-1},\ldots,\frac{u_N}{m\tau-1};-\frac{\tau}{m\tau-1}\right)
  \\
\label{eq: modular STS 2}
  &=e^\frac{\pi imN}{4}\frac{e^\frac{\pi imu^2}{N(m\tau-1)}}{N\sqrt{-m\tau+1}}\left[\sum_{l=0}^{N-1}(-1)^{ml}\zeta_{2N}^{ml^2-2l(j+k)}\right]_{j,k=0}^{N-1}\widetilde{M}_N\left(\frac{u_0}{m\tau-1},\ldots,\frac{u_N}{m\tau-1};-\frac{\tau}{m\tau-1}\right)
\end{align}
from \eqref{eq: MN completion 3} and \eqref{eq: MN completion 4}.
Then, rewriting $m\mapsto mN$, the $(0,k)$ entry of the matrix of \eqref{eq: modular STS 2} is 
\begin{align*}
\sum_{l=0}^{N-1}(-1)^{mNl}\zeta_{2N}^{mNl^2-2lk}
  &=\sum_{l=0}^{N-1}\zeta_N^{-lk}
  =\begin{cases}
N & k=0\\
0 & k=1,\ldots,N-1
\end{cases}.
\end{align*}
Therefore, we obtain the conclusion. 
\end{proof}
\begin{coro}
\label{coro: odd 2}
For $m\in\Zz_{>0}$, the multivariable $\mu$-function satisfies the following relations: 
\begin{align}
\label{eq: odd mun 1}
  \mu_N(u_0,\ldots,u_N;\tau)
  &=
  e^\frac{\pi imN}{4}\mu_N(u_0,\ldots,u_N;\tau+m),
  \\
  \mu_N(u_0,\ldots,u_N;\tau)
  &=
  \frac{i^\frac{m}{2}e^\frac{\pi imu^2}{mN\tau-1}}{\sqrt{-mN\tau+1}}\mu_N\left(\frac{u_0}{mN\tau-1},\ldots,\frac{u_N}{mN\tau-1};-\frac{\tau}{mN\tau-1}\right)
  \nonumber\\
\label{eq: odd mun 2}
  &\quad-\frac{i^{\frac{m+1}{2}-N}}{2\sqrt{m}}\sum_{j=1}^{m}(-1)^je^\frac{\pi i(2j-1)^2}{4m}h\left(u+\frac{2j-1}{2m}-\frac{1}{2};N\tau-\frac{1}{m}\right).
\end{align}
\end{coro}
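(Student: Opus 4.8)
The strategy is to extract the two scalar identities \eqref{eq: odd mun 1} and \eqref{eq: odd mun 2} from the vector-valued completions \eqref{eq: odd trans 1} and \eqref{eq: odd trans 2} of Corollary \ref{coro: odd}, i.e. to undo the modular completion by subtracting off the $R$-term, at the cost of picking up a Mordell-integral correction via the transformation law \eqref{eq: R 4}. First I would recall the definition \eqref{eq: modular completion of muN}, namely $\widetilde{\mu}_N=\mu_N+\tfrac{i}{2}R(u+\tfrac{N+1}{2};N\tau)$. For \eqref{eq: odd mun 1}: substitute this definition into \eqref{eq: odd trans 1}, then use the periodicity \eqref{eq: R 3} (iterated $mN$ times on the modulus $N\tau$, so $R(u+\tfrac{N+1}{2};N(\tau+m)) = e^{-\pi i m N/4} R(u+\tfrac{N+1}{2};N\tau)$) to see that the $R$-terms on both sides match exactly, leaving \eqref{eq: odd mun 1} for $\mu_N$ alone. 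This step is essentially bookkeeping with \eqref{eq: R 3}.

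For \eqref{eq: odd mun 2} the work is in tracking the $R$-to-$h$ conversion. Substitute the definition of $\widetilde\mu_N$ into \eqref{eq: odd trans 2} with the modulus parameter written as $\tau':= -\tfrac{\tau}{mN\tau-1}$, so that the completion on the left carries $R\!\left(\tfrac{u}{mN\tau-1}+\tfrac{N+1}{2}; N\tau'\right)$. The plan is to rewrite this via \eqref{eq: R 4}: the S-transformation there relates $R(w/\sigma;-1/\sigma)$ to $-R(w;\sigma)+h(w;\sigma)$; one first conjugates the $\mathrm{ST}^{m}\mathrm{S}$-type map $N\tau \mapsto N\tau'$ into a composite of an $S$ and $m$ successive $T$'s on the modulus $N\tau$, applies \eqref{eq: R 3} for the $T$-parts and \eqref{eq: R 4} once for the $S$-part. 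After the $R$-contributions cancel against $\tfrac{i}{2}R(u+\tfrac{N+1}{2};N\tau)$ on the right-hand side, what remains is a sum of $h$-terms; collecting exponential prefactors and using the argument-shift identities \eqref{eq: h relation 1}–\eqref{eq: h relation 2} of Lemma \ref{lem: h relation} to telescope the $m$ pieces into the stated sum $\sum_{j=1}^m (-1)^j e^{\pi i(2j-1)^2/(4m)} h\!\left(u+\tfrac{2j-1}{2m}-\tfrac12; N\tau-\tfrac1m\right)$ produces \eqref{eq: odd mun 2}. Alternatively — and this is probably cleaner — one reads \eqref{eq: odd mun 2} directly off \eqref{eq: MN modular 2} (the uncompleted modular law for $M_N$) specialized to the $k=0$ component after the same $\mathrm{ST}^{mN}\mathrm{S}$ reduction used in the proof of Corollary \ref{coro: odd}: the $k=0$ row of $[\zeta_N^{-jk}]$ kills all but the first $M_N$-component, giving $\mu_N$, while the explicit $h$-vector in \eqref{eq: MN modular 2} contributes the $k=0$ entry $h(u-\tfrac{N-1}{2};N\tau)$, which after $m$-fold iteration (gluing the maps $-1/\tau$ and $\tau+1$) and repeated use of \eqref{eq: h relation 1}–\eqref{eq: h relation 2} unfolds into the displayed sum over $j$.

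The main obstacle I anticipate is purely computational: correctly bookkeeping the chain of exponential and root-of-unity prefactors through the composite modular transformation $\gamma_m:\tau\mapsto -\tfrac{\tau}{mN\tau-1}$ (which is $\mathrm{S}\,\mathrm{T}^{mN}\,\mathrm{S}$ in $\mathrm{SL}_2(\Zz)$ acting on the modulus $N\tau$), in particular matching $e^{\pi i m u^2/(mN\tau-1)}$, the factor $\sqrt{-mN\tau+1}$, the power $i^{m/2}$, and the Gaussian quadratic exponent hidden inside \eqref{eq: R 4} and \eqref{eq: h relation 1}, so that all the quadratic-in-$u$ terms cancel and only the claimed $h$-sum with its half-integer shifts $\tfrac{2j-1}{2m}-\tfrac12$ and modulus $N\tau-\tfrac1m$ survives. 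I expect no conceptual difficulty: every ingredient is already available in Lemmas \ref{lem: h relation} and \ref{lem: R transformation} and in Theorems \ref{thm: MN modular}, \ref{thm: modular completion}; the proof is an induction on $m$ using \eqref{eq: h relation 1}–\eqref{eq: h relation 2} to pass from the $m$-th to the $(m{+}1)$-st correction term, with the base case $m=1$ being exactly the $k=0$ component of \eqref{eq: MN modular 2} together with \eqref{eq: modular completion of muN}.
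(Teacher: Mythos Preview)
Your plan for \eqref{eq: odd mun 1} works, though the paper does it more directly: it reads \eqref{eq: odd mun 1} straight off the $k=0$ component of the uncompleted law \eqref{eq: MN modular 1}, without detouring through $\widetilde\mu_N$ and \eqref{eq: R 3}.

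For \eqref{eq: odd mun 2} your route is genuinely different from the paper's. You propose to subtract the $R$-terms from \eqref{eq: odd trans 2} and then compute the resulting $R$-difference directly, by decomposing the modular word $ST^mS$ on the modulus $N\tau$ and chaining \eqref{eq: R 3}--\eqref{eq: R 4} (or, alternatively, by iterating \eqref{eq: MN modular 2}). The paper instead argues by \emph{characterization}: from \eqref{eq: odd trans 2} it knows the difference $\widetilde h_N(u;\tau)$ of the two $\mu_N$-sides is a holomorphic function of $u$ alone; it then uses the iterated shift formulas for $\mu_N$ coming from \eqref{mun relation 1}--\eqref{mun relation 2} to show that $\widetilde h_N$ satisfies a specific pair of difference equations in $u\mapsto u+(mN\tau-1)$ and $u\mapsto u+1$; it checks by Lemma~\ref{lem: h relation} that the explicit sum $h_N(u;\tau):=\sum_{j=1}^m(-1)^j e^{\pi i(2j-1)^2/(4m)}h(u+\tfrac{2j-1}{2m}-\tfrac12;N\tau-\tfrac1m)$ satisfies the \emph{same} pair; and finally it invokes the uniqueness clause of Lemma~\ref{lem: h relation} to conclude $\widetilde h_N=\tfrac{i^{\frac{m+1}{2}-N}}{2\sqrt m}\,h_N$.

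Your direct-computation route is plausible but has a hidden cost you have not addressed: after the first application of \eqref{eq: R 4} in the $ST^mS$ chain you produce an $h$-term at modulus $m-\tfrac{1}{N\tau}$, whereas the target sum lives at modulus $N\tau-\tfrac1m$; the paper provides no $S$-transformation law for $h$ with which to convert, and \eqref{eq: h relation 1}--\eqref{eq: h relation 2} alone do not telescope between these two moduli. To close this you would effectively have to re-derive such a law from the uniqueness statement in Lemma~\ref{lem: h relation}, at which point you are doing the paper's argument in disguise. The paper's characterization approach sidesteps exactly the exponential bookkeeping you flag as the main obstacle.
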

\begin{proof}[{\bf{Proof}}]
\eqref{eq: odd mun 1}: It is clear from \eqref{eq: MN modular 1}. \\
\eqref{eq: odd mun 2}: For $m>0$, we have
\begin{align}
\label{eq: +mNtau}
\mu_N(u_0+mN\tau,u_1,\ldots,u_N;\tau)
  &=(-1)^{mN}e^{2\pi imu}q^\frac{m^2N}{2}\mu_N(u_0,\ldots,u_N;\tau)
\nonumber\\
  &\quad+\sum_{j=1}^m e^{\pi i(2j-1)(u+\frac{N}{2})}q^{\frac{N}{8}(2j-1)(4m-2j+1)}
\end{align}
and 
\begin{align}
\label{eq: -mNtau}
\mu_N(u_0-mN\tau,u_1,\ldots,u_N;\tau)
  &=(-1)^{mN}e^{-2\pi imu}q^\frac{m^2N}{2}\mu_N(u_0,\ldots,u_N;\tau)
  \nonumber\\
  &\quad-\sum_{j=1}^me^{-\pi i(2j-1)(u+\frac{N}{2})}q^{\frac{N}{8}(2j-1)(4m-2j+1)}
\end{align}
using \eqref{mun relation 2} regardless of the parity of $N$. 
From \eqref{eq: odd trans 2}, the function
\begin{align}
\label{defi: hNtilde}
  \frac{i^\frac{m}{2}e^\frac{\pi imu^2}{mN\tau-1}}{\sqrt{-mN\tau+1}}\mu_N\left(\frac{u_0}{mN\tau-1},\ldots,\frac{u_N}{mN\tau-1};-\frac{\tau}{mN\tau-1}\right)
  -
  \mu_N(u_0,\ldots,u_N;\tau)
\end{align} 
is a one variable function with respect to $u$. 
We define this function \eqref{defi: hNtilde} as $\widetilde{h}_N(u;\tau)$ that is a holomorphic function in $u\in\Cz$. 
From \eqref{mun relation 1} and \eqref{mun relation 2} of Corollary \ref{prop: muN} and the equations \eqref{eq: +mNtau} and \eqref{eq: -mNtau}, the function $\widetilde{h}_N$ satisfies the following relations: 
\begin{align*}
\widetilde{h}_N(u+mN\tau-1;\tau)
   &=-(-1)^me^{2\pi imu}q^\frac{m^2N}{2}\widetilde{h}_N(u;\tau)
   +\sum_{j=1}^m e^{\pi i(2j-1)(u+\frac{N}{2})}q^{\frac{N}{8}(2j-1)(4m-2j+1)},
   \\
\widetilde{h}_N(u)+\widetilde{h}_N(u+1)
   &=\frac{i^{\frac{m}{2}-N}}{\sqrt{-mN \tau+1}}\sum_{j=1}^m(-1)^je^{\frac{\pi i(2j-1)^2}{4m}+\frac{\pi im}{mN\tau-1}(u+\frac{2j-1}{2m})^2}.
\end{align*}
On the other hand, we define
\begin{align*}
h_N(u;\tau):=\sum_{j=1}^{m}(-1)^je^\frac{\pi i(2j-1)^2}{4m}h\left(u+\frac{2j-1}{2m}-\frac{1}{2};N\tau-\frac{1}{m}\right).
\end{align*}
From Lemma \ref{lem: h relation}, the function $h_N$ satisfies the following relations: 
\begin{align*}
h_N(u+1;\tau)+h_N(u;\tau)&=\frac{i^{-\frac{1}{2}}2\sqrt{m}}{\sqrt{-m N\tau+1}}\sum_{j=1}^m(-1)^je^{\frac{\pi i(2j-1)^2}{4m}+\frac{\pi im}{m N\tau-1}(u+\frac{2j-1}{2m})^2},\\
h_N(u+mN\tau-1;\tau)&=-(-1)^me^{2\pi imu}q^\frac{m^2N}{2}h_N(u;\tau)\\
    &\quad+2\sqrt{m}i^Ne^{-\frac{\pi i}{4}(m+1)}\sum_{j=1}^m e^{\pi i(2j-1)(u+\frac{N}{2})}q^{\frac{N}{8}(2j-1)(4m-2j+1)}.
\end{align*}
Then, we obtain the conclusion. 
\end{proof}

\subsection{The case of even $N$}
In this subsection, we assume that $N$ is a positive even number. 
By similar arguments, we obtain the following formulas immediately.  
\begin{coro}
\label{coro: even}
For $m\in\Zz$, the function $\widetilde{\mu}_N$ satisfies the following relations: 
\begin{align}
\label{eq: even trans 1}
  \widetilde{\mu}_N(u_0,\ldots,u_N;\tau)
  &=e^\frac{\pi imN}{4}\widetilde{\mu}_N(u_0,\ldots,u_N;\tau+m), 
  \\
\label{eq: even trans 2}
  \widetilde{\mu}_N(u_0,\ldots,u_N;\tau)
  &=
  -\frac{e^\frac{2\pi imu^2}{2mN\tau-1}}{\sqrt{-2mN\tau+1}}\widetilde{\mu}_N\left(\frac{u_0}{2mN\tau-1},\ldots,\frac{u_N}{2mN\tau-1};-\frac{\tau}{2mN\tau-1}\right),
\end{align}
where the definition of $\widetilde{\mu}_N$ is the same as \eqref{eq: modular completion of muN}. 
\end{coro}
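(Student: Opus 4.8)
The plan is to imitate the proof of Corollary \ref{coro: odd}, the only genuinely new point being a careful reading of how the parity of $N$ controls the relevant powers of roots of unity. For \eqref{eq: even trans 1} I would iterate the $T$-transformation \eqref{eq: MN completion 3} of $\widetilde{M}_N$ a total of $m$ times (using the inverse relation for negative $m$) and then read off the $0$-th component of both sides; by \eqref{modular completion of MN} and \eqref{eq: modular completion of muN} this component is $\widetilde{\mu}_N$, and since the diagonal matrix $[\delta_{jk}(-1)^k\zeta_{2N}^{k^2}]_{j,k=0}^{N-1}$ appearing in \eqref{eq: MN completion 3} has top-left entry $1$, only the scalar $e^{\pi imN/4}$ survives, which is exactly \eqref{eq: even trans 1}. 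This step does not use the parity of $N$.

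For \eqref{eq: even trans 2} I would start from the composite $STS$-type identity \eqref{eq: modular STS}--\eqref{eq: modular STS 2} for $\widetilde{M}_N$ established inside the proof of Corollary \ref{coro: odd}, which is valid for every $m\in\Zz$ and any parity of $N$, and apply it with the exponent equal to $2mN$. Using $\tfrac{\pi i(2mN)u^2}{N(2mN\tau-1)}=\tfrac{2\pi imu^2}{2mN\tau-1}$, the scalar prefactor becomes $\dfrac{(-1)^{N+1}e^{\pi imN^2/2}\,e^{2\pi imu^2/(2mN\tau-1)}}{N\sqrt{-2mN\tau+1}}$ and the matrix becomes $\bigl[\sum_{l=0}^{N-1}(-1)^{2mNl}\zeta_{2N}^{2mNl^2-2l(j+k)}\bigr]_{j,k=0}^{N-1}$. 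I then check that its $0$-th row is $(N,0,\ldots,0)$: indeed $(-1)^{2mNl}=1$ and $\zeta_{2N}^{2mNl^2}=e^{2\pi iml^2}=1$, so the $(0,k)$-entry collapses to $\sum_{l=0}^{N-1}\zeta_N^{-lk}=N\delta_{k0}$. Reading off the $0$-th component, the factor $N$ cancels the $1/N$; for even $N$ one has $(-1)^{N+1}=-1$ and $e^{\pi imN^2/2}=1$ (since $N^2/2\in2\Zz$), which yields precisely \eqref{eq: even trans 2}. The degenerate value $m=0$ is consistent, reducing via \eqref{mun relation 4}, \eqref{eq: R 1} and the evenness of $R$ in its first argument to the reflection $\mu_N(-u_0,\ldots,-u_N;\tau)=-\mu_N(u_0,\ldots,u_N;\tau)$, which holds because $N$ is even.

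The one genuine subtlety --- and the reason the statement involves $2mN\tau-1$ rather than the $mN\tau-1$ that sufficed in the odd case --- lies in the computation of that $0$-th row. If one uses the exponent $mN$ instead of $2mN$ with $N$ even, then $(-1)^{mNl}=1$ but $\zeta_{2N}^{mNl^2}=e^{\pi iml^2}=(-1)^{ml}$, so the $(0,k)$-entry equals $\sum_{l=0}^{N-1}(-1)^{ml}\zeta_N^{-lk}$, which is $N\delta_{k0}$ only for even $m$ and is instead supported at $k=N/2$ for odd $m$; the composite relation would then fail to close on $\widetilde{\mu}_N$ and would pull in $\widetilde{\mu}_N(u_0+\tfrac N2\tau,u_1,\ldots,u_N;\tau)$. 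Doubling the exponent annihilates the obstructing factor $(-1)^{ml}$ for all $m$ and restores the collapse onto the top component. Beyond this, the argument is pure bookkeeping of roots of unity, parallel to the odd case, and presents no analytic difficulty since $\widetilde{M}_N$ is holomorphic on the relevant half-planes.
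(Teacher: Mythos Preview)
Your proposal is correct and matches the paper's approach exactly: the paper's own proof of Corollary~\ref{coro: even} is just the line ``By similar arguments, we obtain the following formulas immediately,'' and what you have written is precisely that similar argument carried out in detail, including the correct explanation of why the substitution $m\mapsto 2mN$ (rather than $m\mapsto mN$) in \eqref{eq: modular STS}--\eqref{eq: modular STS 2} is forced when $N$ is even.
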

\begin{coro}
\label{coro: even 2}
For $m\in\Zz_{>0}$, the function $\mu_N$ satisfies the following relations: 
\begin{align}
\label{eq: even mun 1}
  \mu_N(u_0,\ldots,u_N;\tau)
  &=
  e^\frac{\pi imN}{4}\mu_N(u_0,\ldots,u_N;\tau+m),
  \\
  \mu_N(u_0,\ldots,u_N;\tau)
  &=
  -\frac{e^\frac{2\pi imu^2}{2mN\tau-1}}{\sqrt{-2mN\tau+1}}\mu_N\left(\frac{u_0}{2mN\tau-1},\ldots,\frac{u_N}{2mN\tau-1};-\frac{\tau}{2mN\tau-1}\right)
  \nonumber \\
\label{eq: even mun 2}
  &\quad +
  \frac{i^{-\frac{1}{2}-N}}{2\sqrt{2m}}\sum_{j=1}^{2m}e^\frac{\pi i(2j-1)^2}{8m}h\left(u+\frac{2j-1}{4m}-\frac{1}{2};N\tau-\frac{1}{2m}\right).
\end{align}
\end{coro}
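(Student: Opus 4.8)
The plan is to mirror exactly the strategy used in the proof of Corollary \ref{coro: odd 2}, now invoking Corollary \ref{coro: even} in place of Corollary \ref{coro: odd} and replacing the modulus $mN\tau-1$ by $2mN\tau-1$ throughout. First I would dispose of \eqref{eq: even mun 1}: this is immediate from \eqref{eq: MN modular 1} (equivalently, from the componentwise relation $\mu_N(u_0+k(\tau+1),u_1,\ldots,u_N;\tau+1)=(-1)^k e^{-\pi iN/4}\mu_N(u_0+k\tau,u_1,\ldots,u_N;\tau)$ iterated $m$ times), since the parity of $N$ plays no role there. Next, for \eqref{eq: even mun 2}, I would start from \eqref{eq: even trans 2} of Corollary \ref{coro: even} and argue, exactly as in the odd case via the $S\!T^{\!m}\!S$-type computation of \eqref{eq: modular STS}--\eqref{eq: modular STS 2}, that the difference
\begin{align*}
\widetilde{h}_N(u;\tau):=-\frac{e^{\frac{2\pi imu^2}{2mN\tau-1}}}{\sqrt{-2mN\tau+1}}\mu_N\!\left(\frac{u_0}{2mN\tau-1},\ldots,\frac{u_N}{2mN\tau-1};-\frac{\tau}{2mN\tau-1}\right)-\mu_N(u_0,\ldots,u_N;\tau)
\end{align*}
depends on $u_0,\ldots,u_N$ only through $u=u_0+\cdots+u_N$ and is holomorphic in $u$.

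The core of the argument is then the standard "unique holomorphic solution of two functional equations" device, carried out as in the odd case. Using \eqref{mun relation 1} and \eqref{mun relation 2} of Corollary \ref{prop: muN}, together with the iterated shift formulas \eqref{eq: +mNtau} and \eqref{eq: -mNtau} (which hold regardless of the parity of $N$, with $m$ replaced by $2m$ where the shift is $2mN\tau$), I would derive a pair of functional equations for $\widetilde{h}_N$: one relating $\widetilde{h}_N(u+2mN\tau-1;\tau)$ to $\widetilde{h}_N(u;\tau)$ with an explicit theta-type inhomogeneous term, and one relating $\widetilde{h}_N(u+1)+\widetilde{h}_N(u)$ to an explicit Gaussian sum. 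Then I would define
\begin{align*}
h_N(u;\tau):=\sum_{j=1}^{2m}e^{\frac{\pi i(2j-1)^2}{8m}}h\!\left(u+\frac{2j-1}{4m}-\frac12;N\tau-\frac{1}{2m}\right),
\end{align*}
and verify via Lemma \ref{lem: h relation} (the relations \eqref{eq: h relation 1} and \eqref{eq: h relation 2}) that $h_N$ satisfies the same two functional equations up to an explicit nonzero scalar. By the uniqueness clause in Lemma \ref{lem: h relation}, $\widetilde{h}_N$ must equal $h_N$ times that scalar, and matching the constants yields the coefficient $\dfrac{i^{-\frac12-N}}{2\sqrt{2m}}$ in \eqref{eq: even mun 2}.

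The main obstacle is bookkeeping rather than conceptual: correctly tracking the $8m$ versus $4m$ factors in the exponents of the Gaussian sums, the shift $N\tau-\frac{1}{2m}$ inside the Mordell integrals, and the precise power of $i$ and the square root $\sqrt{-2mN\tau+1}$ that appear when one pushes \eqref{eq: h relation 1} through the sum over $j$. One must also be careful that the $\zeta_{2N}$-matrix manipulation in \eqref{eq: modular STS 2} collapses to a single row (the $(0,k)$ entry) exactly as in the odd case — here the parity of $N$ enters only through the sign $(-1)^{2mNl}=1$, which actually simplifies matters — so the even case needs no genuinely new ideas beyond those already deployed in Corollary \ref{coro: odd 2}. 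I would also double-check the edge normalization by specializing to $N=1$, $m=1$, where \eqref{eq: even mun 2} should reduce to (a shifted form of) the classical transformation \eqref{mu relation 6}.
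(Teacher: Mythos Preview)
Your proposal is correct and follows exactly the approach the paper intends: the paper simply writes ``By similar arguments, we obtain the following formulas immediately'' before Corollaries~\ref{coro: even} and~\ref{coro: even 2}, deferring entirely to the proof of Corollary~\ref{coro: odd 2} with the replacement $m\mapsto 2m$ throughout. One minor slip: your proposed sanity check at $N=1$ is inapplicable since this subsection assumes $N$ even, so the smallest test case would be $N=2$.
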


\appendix
\def\thesection{Appendix}
\renewcommand{\theequation}{A.\arabic{equation}}
\setcounter{equation}{0}
\section{
A connection problem of the $q$-difference equation \eqref{multi gmu eq}
}

In this Appendix, we present a connection problem of the higher order $q$-difference equation \eqref{multi gmu eq}. 
\label{sec: 2}
\begin{lem}
\label{lem: qK CP}
The relation between the solutions $\mathcal{L}_q^{N-M}\circ\mathcal{B}_q^{N-M}(\tilde{f}_k)(\lambda_0,\ldots,\lambda_{N-M})$ and \eqref{conv solution} of the $q$-difference equation \eqref{multi gmu eq} are as follows: 
\begin{align}
&\mathcal{L}_q^{N-M}\circ\mathcal{B}_q^{N-M}(\tilde{f}_k)(\lambda_0,\ldots,\lambda_{N-M}) \nonumber
    \\
    &=\sum_{j=0}^N\frac{(\{b_sq/a_j\}_{0\leq s\leq M}^k)_\infty(\{a_r/b_k\}_{0\leq r\leq N}^j)_\infty}{(\{b_sq/b_k\}_{0\leq s\leq M}^k)_\infty(\{a_r/a_j\}_{0\leq r\leq N}^j)_\infty}\frac{\theta_q(-a_jb_k^{N-M-1}\lambda_0)}{\theta_q(-b_k^{N-M}\lambda_0)}\left[\prod_{l=1}^{N-M}\frac{\theta_q(a_j\lambda_l/b_k\lambda_{l-1})}{\theta_q(\lambda_l/\lambda_{l-1})}\right] \nonumber
    \\
    &\times{}_{N+1}\phi_N\hyper{\{a_j/b_s\}_{0\leq s\leq M},0,\ldots ,0}{\{a_jq/a_r\}_{0\leq r\leq N}^j}{q}{(-1)^{N-M}\frac{b_0\cdots b_Mq^{N+1}}{a_0\cdots a_N\lambda_{N-M}}}.
\end{align}
In particular, for $k=0$ and $b_0=1$, we have
\begin{align}
\label{eq: tphi qB}
   \mathcal{L}^{N-M}_q\circ\mathcal{B}^{N-M}_q(\tilde{f}_0)(\lambda_0,\ldots,\lambda_{N-M})
   =
   {}_N\tilde{\phi}_M\hyper{a_0,\ldots,a_N}{b_1,\ldots, b_M}{q}{\lambda_0,,-\frac{\lambda_1}{\lambda_0}\ldots,-\frac{\lambda_{N-M}}{\lambda_{N-M-1}}}, 
\end{align}
where 
\begin{align*}
{}_N\tilde{\phi}_M\hyper{a_0,\ldots,a_N}{b_1,\ldots,b_M}{q}{x_0,\ldots,x_{N-M}}
    &:=
    \sum_{j=0}^N
       \frac{(\{b_sq/a_j\}_{1\leq s\leq M})_\infty(\{a_r\}_{0\leq r\leq N}^j)_\infty}{(\{b_sq\}_{1\leq s\leq M})_\infty(\{a_r/a_j\}_{0\leq r\leq N}^j)_\infty}
       \left[\prod_{l=0}^{N-M}
          \frac{\theta_q(-a_jx_l)}{\theta_q(-x_l)}\right]
    \\
    &\quad\times
    \sum_{m=0}^\infty
       \frac{(a_j)_m(\{a_j/b_s\}_{1\leq s\leq M})_m}{(\{a_jq/a_r\}_{0\leq r\leq N})_m}
       \left(\frac{b_1\cdots b_Mq^{N+1}}{a_0\cdots a_Nx_0\cdots x_{N-M}}\right)^m. 
\end{align*}
\end{lem}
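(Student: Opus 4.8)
The plan is to establish the connection formula for structural reasons and then to pin down its coefficients. By the intertwining relations for $\mathcal{L}_q^N\circ\mathcal{B}_q^N$ recorded in the main text, the left-hand side $\mathcal{L}_q^{N-M}\circ\mathcal{B}_q^{N-M}(\tilde{f}_k)$ is, as a function of $\lambda_{N-M}$, a solution of the order-$(N+1)$ equation \eqref{multi gmu eq}, whose solution space over the field of $q$-periodic meromorphic functions is $(N+1)$-dimensional with basis the convergent solutions \eqref{conv solution} (for generic parameters, $a_i/a_j\notin q^{\Zz}$). Hence
\[
\mathcal{L}_q^{N-M}\circ\mathcal{B}_q^{N-M}(\tilde{f}_k)=\sum_{j=0}^{N}C_{k,j}(\lambda_0,\dots,\lambda_{N-M})\,\Psi_j,
\]
where $\Psi_j$ denotes the $j$-th solution in \eqref{conv solution} written in the variable $\lambda_{N-M}$, and the task is to identify the $C_{k,j}$. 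Specialising to $k=0$, $b_0=1$ afterwards is a mere relabelling and yields \eqref{eq: tphi qB}, which simultaneously serves as the definition of ${}_N\tilde{\phi}_M$.

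To compute the coefficients I would first evaluate $\mathcal{B}_q^{N-M}(\tilde{f}_k)$: the divergent weight $((-1)^nq^{n(n-1)/2})^{N-M}$ of the ${}_{N+1}\phi_M$-series $\tilde{f}_k$ is exactly cancelled by the weight $q^{(N-M)n(n-1)/2}$ of the $(N-M)$-fold $q$-Borel transform, so $\mathcal{B}_q^{N-M}(\tilde{f}_k)(\xi)$ is a locally convergent ${}_{N+1}\phi_N$-series whose analytic continuation is meromorphic, with simple poles along the $q$-spirals determined by the parameters $a_r/b_k$. Inserting this continuation into the multi-sum defining $\mathcal{L}_q^{N-M}$ produces a bilateral sum over $n\in\Zz^{N-M}$; the geometric factors $(\lambda_{j-1}/\lambda_j)^{n_j}q^{n_j(n_j-1)/2}$ together with the residues of that continuation (which again run over $q$-spirals) collapse, by the Jacobi triple product \eqref{theta relation 6}, into the theta quotient $\tfrac{\theta_q(-a_jb_k^{N-M-1}\lambda_0)}{\theta_q(-b_k^{N-M}\lambda_0)}\prod_{l=1}^{N-M}\tfrac{\theta_q(a_j\lambda_l/b_k\lambda_{l-1})}{\theta_q(\lambda_l/\lambda_{l-1})}$, while the sums over the residue parameters reassemble, via the connection formula relating a ${}_{N+1}\phi_N$ at the origin to ${}_{N+1}\phi_N$'s at infinity (the Heine/Watson transformation, cf.\ \cite{GR}), into $\Psi_j$, the $q$-shifted-factorial constant $\tfrac{(\{b_sq/a_j\}^k)_\infty(\{a_r/b_k\}^j)_\infty}{(\{b_sq/b_k\}^k)_\infty(\{a_r/a_j\}^j)_\infty}$ arising as the residue of that transformation at $a_j$.

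A less computational alternative determines the $C_{k,j}$ from their analytic structure. As a function of $\lambda_{N-M}$, with $\Psi_j$ divided out, $C_{k,j}$ is meromorphic on $\Cz^\times$ with prescribed quasi-periodicity (read off from the two sides) and with poles confined to a single $q$-spiral, hence is a theta quotient, unique up to a constant; this forces the factor $\tfrac{\theta_q(a_j\lambda_{N-M}/b_k\lambda_{N-M-1})}{\theta_q(\lambda_{N-M}/\lambda_{N-M-1})}$. Running the same reasoning in $\lambda_{N-M-1}$, then $\lambda_{N-M-2}$, down to $\lambda_0$ forces the whole theta quotient, after which one limiting evaluation (letting $\lambda_{N-M}$ tend to a point of its $q$-spiral, or comparing leading coefficients) supplies the $q$-shifted-factorial constant. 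Equivalently one may induct on $N-M$: the base case $N-M=1$ is the single $q$-Borel-Laplace summation of $\tilde{f}_k$, i.e.\ the confluent $q$-hypergeometric connection problem (cf.\ \cite{O}, \cite{ST}), and the inductive step applies one further $q$-Laplace transform and uses \eqref{theta relation 5}, \eqref{theta relation 6} to produce the level-$(N-M)$ theta factor.

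I expect the main obstacle to be the normalisation: identifying the residue of the Heine/Watson transformation at $a_j$ in the direct route, or carrying out the limiting evaluation of $C_{k,j}$ that yields the $q$-shifted-factorial constant in the structural route. By contrast, the quasi-periodicity bookkeeping for the theta quotients is routine once \eqref{theta relation 5} and \eqref{theta relation 6} are available, and the reduction to \eqref{eq: tphi qB} is immediate.
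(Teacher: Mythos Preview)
Your overall strategy is sound, but the order of operations you propose is more complicated than what the paper actually does, and your ``residue'' step is not quite the right mechanism for a discrete $q$-Laplace sum.

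The paper's proof is a one-liner in structure: compute $\mathcal{B}_q^{N-M}(\tilde f_k)$ as a convergent ${}_{N+1}\phi_N$ (exactly as you say), then apply Watson's connection formula \emph{before} taking the $q$-Laplace transform. Watson's formula already writes this ${}_{N+1}\phi_N$ as a finite sum over $j=0,\dots,N$ of terms of the form
\[
\frac{\theta_q(-a_jb_k^{N-M-1}\xi)}{\theta_q(-b_k^{N-M}\xi)}\cdot\bigl(\text{series in }\xi^{-1}\bigr),
\]
with exactly the $q$-shifted-factorial constants appearing in the statement. Applying $\mathcal{L}_q^{N-M}$ to each such term is then a direct computation: the theta quotient in $\xi$ is $q$-quasiperiodic, so evaluating it at $\lambda_0q^{|n|}$ and summing against the kernel $\prod_l(\lambda_{l-1}/\lambda_l)^{n_l}q^{n_l(n_l-1)/2}/\theta_q(\lambda_{l-1}/\lambda_l)$ produces, via the triple product, the theta quotient in $\lambda_0,\dots,\lambda_{N-M}$ displayed in the lemma; the series in $\xi^{-1}$ passes through $\mathcal{L}_q^{N-M}$ monomial by monomial using \eqref{mono summation}. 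No residue analysis, no induction, no analytic-structure argument is needed.

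By contrast, your first route (Borel $\to$ Laplace $\to$ residues $\to$ Watson) tries to sum the bilateral $\mathcal{L}_q^{N-M}$ applied to a single ${}_{N+1}\phi_N$ and only afterwards decompose into the $\Psi_j$. That is doable in principle, but the ``residues of the analytic continuation'' language is off: $\mathcal{L}_q^{N-M}$ is a discrete sum over $q$-spirals, not a contour integral, so what really happens is the same quasi-periodicity calculation as above, just obscured by not having split into the $j$-pieces first. Your structural and inductive alternatives would also succeed, but they are overkill once you notice that Watson's formula can be applied at the Borel level; the normalisation you flag as the main obstacle is then no obstacle at all, since Watson's formula already carries the correct $q$-Pochhammer prefactors.
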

\begin{proof}[{\bf{Proof}}]
From the Watson's connection formula \cite[p. 121, (4.5.2)]{GR}: 
\begin{align*}
{}_{N+1}\phi_N\hyper{a_0,\ldots,a_N}{b_1,\ldots,b_N}{q}{x}&=\sum_{j=0}^N\frac{(\{b_k/a_j\}_{1\leq k\leq N})_\infty(\{a_k\}_{0\leq k\leq N}^j)_\infty}{(\{b_k\}_{1\leq k\leq N})_\infty (\{a_k/a_j\}_{0\leq k\leq N}^j)_\infty}\\
&\times \frac{\theta_q(-a_jx)}{\theta_q(-x)}{}_{N+1}\phi_N\hyper{a_j,a_jq/b_1,\ldots,a_jq/b_N}{\{a_jq/a_k\}_{0\leq k\leq N}^j}{q}{\frac{qb_1\cdots b_N}{a_0\cdots a_Nx}}, 
\end{align*}
we have 
\begin{align*}
&\mathcal{L}_q^{N-M}\circ\mathcal{B}_q^{N-M}(\tilde{f}_k)(\lambda_0,\ldots,\lambda_{N-M})
    \\
    &=\mathcal{L}_q^{N-M}\left({}_{N+1}\phi_N\hyper{\{a_r/b_k\}_{0\leq r\leq N}}{\{b_sq/b_k\}_{0\leq s\leq M}^k,0,\ldots, 0}{q}{\left(\frac{b_k}{q}\right)^{N-M}x}\right)
    \\
    &=
    \sum_{j=0}^N
        \frac{(\{b_sq/a_j\}_{0\leq s\leq M}^k)_\infty(\{a_r/b_k\}_{0\leq r\leq N}^j)_\infty}{(\{b_sq/b_k\}_{0\leq s\leq M}^k)_\infty(\{a_r/a_j\}_{0\leq r\leq N}^j)_\infty}\left(\frac{a_j}{b_k}\right)^{N-M}
    \\
    &\times\mathcal{L}_q^{N-M}\left(\frac{\theta_q(-a_jb_k^{N-M-1}x)}{\theta_q(-b_k^{N-M}x)}{}_{M+1}\phi_N\hyper{\{a_j/b_s\}_{0\leq s\leq M}}{\{a_jq/a_r\}_{0\leq r\leq N}^j}{q}{\left(\frac{a_jq}{b_k}\right)^{N-M}\frac{b_0\cdots b_M q^{N+1}}{a_0\cdots a_N x}}\right).
\end{align*}
Therefore, we obtain the conclusion from a bit calculation. 
\end{proof}

\begin{rmk}
The case of $M=0$ and $N=1$ with Lemma $\ref{lem: qK FS}$ and Lemma $\ref{lem: qK CP}$ is in \cite[Theorem $2.2.1$]{Zh}. 
 Also, Adachi \cite[Theorem $3.1$]{Ad} obtained a similar connection formula for the image $\mathcal{L}_{q^{N-M}}\circ\mathcal{B}_{q^{N-M}}$. 
\end{rmk}
The function ${}_N\tilde{\phi}_M$ satisfies the following properties. 
\begin{prop}
\label{prop: appendix 1}
\ \\
$(1)$\ The function ${}_N\tilde{\phi}_M$ is a symmetric function with respect to $\{x_0,\ldots,x_{N-M}\}$, $\{a_0,\ldots,a_N\}$ or $\{b_1,\ldots,b_M\}$. \\
$(2)$\ The function ${}_N\tilde{\phi}_M$ satisfies the following $q$-difference equation: 
\begin{align*}
\left[T_x^{N-M}(1-T_x)(1-b_1T_x)\cdots (1-b_MT_x)-(-1)^{N-M}X_{N-M}(1-a_0T_x)\cdots (1-a_NT_x)\right]=0.
\end{align*}
$(3)$\ The function ${}_N\tilde{\phi}_M$ satisfies the following relation: 
\begin{align}
&{}_N\tilde{\phi}_M\hyper{a_0,\ldots, a_N}{b_1,\ldots, b_M}{q}{x_0,\ldots, x_{N-M}}-{}_N\tilde{\phi}_M\hyper{a_0,\ldots, a_N}{b_1,\ldots, b_M}{q}{x_0y^{-1},x_1y,x_2,\ldots, x_{N-M}}
    \nonumber\\
    &=\frac{\theta_q(-y)\theta_q(-x_1y/x_0)}{\theta_q(-y/x_0)\theta_q(-x_1y)\theta_q(-x_0)\cdots\theta_q(-x_{N-M})}\sum_{j=0}^N\frac{(\{b_sq/a_j\}_{1\leq s\leq M})_\infty(\{a_r\}_{0\leq r\leq N}^j)_\infty}{(\{b_sq\}_{1\leq s\leq M})_\infty(\{a_r/a_j\}_{0\leq r\leq N}^j)_\infty}
    \nonumber\\
\label{tildephi}
    &\times
    \theta_q(-a_j)\theta_q(-a_jx_0x_1)\left[\prod_{l=2}^{N-M}\theta_q(-a_jx_l)\right]\sum_{m=0}^\infty\frac{(a_j)_m(\{a_j/b_s\}_{1\leq s\leq M})_m}{(\{a_jq/a_r\}_{0\leq r\leq N})_m}\left(\frac{b_1\cdots b_Mq^{N+1}}{a_0\cdots a_Nx_0\cdots x_{N-M}}\right)^m.
\end{align}
\end{prop}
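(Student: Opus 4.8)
The plan is to read off parts $(1)$ and $(2)$ from the $q$-Borel representation \eqref{eq: tphi qB}, and to prove part $(3)$ by reducing it term by term to a Weierstrass-type theta identity.

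\textbf{Parts $(1)$ and $(2)$.} For the symmetry in $(1)$, I would inspect the defining $j$-sum of ${}_N\tilde{\phi}_M$: the variables $x_0,\ldots,x_{N-M}$ enter only through the product $\prod_{l=0}^{N-M}\theta_q(-a_jx_l)/\theta_q(-x_l)$ and through the monomial $(x_0\cdots x_{N-M})^{-m}$ in the inner sum, both manifestly symmetric, and $b_1,\ldots,b_M$ enter only through expressions symmetric in them. The symmetry in $a_0,\ldots,a_N$ is not visible term by term, so here I would invoke \eqref{eq: tphi qB}: the function equals $\mathcal{L}^{N-M}_q\circ\mathcal{B}^{N-M}_q(\tilde{f}_0)$, and $\tilde{f}_0$ (the case $k=0$, $b_0=1$ of Lemma \ref{lem: qK FS}) is a ${}_{N+1}\phi_M$ with upper parameters $a_0,\ldots,a_N$, hence symmetric in them; since neither $\mathcal{B}^{N-M}_q$ nor $\mathcal{L}^{N-M}_q$ involves the parameters $a_r$, this symmetry is inherited. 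For $(2)$, Lemma \ref{lem: qK FS} gives that $\tilde{f}_0$ is a formal solution of the case $b_0=1$ of \eqref{multi gmu eq}, that is, of $[T_x^{N-M}(1-T_x)(1-b_1T_x)\cdots(1-b_MT_x)-x(1-a_0T_x)\cdots(1-a_NT_x)]f=0$. Applying to this operator the intertwining identities $\mathcal{L}^{N-M}_q\circ\mathcal{B}^{N-M}_q(x^mT_x^n g)=\lambda_{N-M}^mT_{\lambda_{N-M}}^n\,\mathcal{L}^{N-M}_q\circ\mathcal{B}^{N-M}_q(g)$ shows that the image solves the same operator in the variable $\lambda_{N-M}$; under the substitution $x_0=\lambda_0$, $x_l=-\lambda_l/\lambda_{l-1}$ of \eqref{eq: tphi qB} one has $\lambda_{N-M}=(-1)^{N-M}X_{N-M}$ and $T_{\lambda_{N-M}}=T_{x_{N-M}}$, which turns that operator into exactly the one in the statement (with $T_x=T_{x_{N-M}}$), and part $(1)$ then allows $T_x$ to be any $T_{x_l}$.

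\textbf{Part $(3)$.} The key point is that under $(x_0,x_1)\mapsto(x_0y^{-1},x_1y)$ the product $x_0x_1\cdots x_{N-M}$ is unchanged, so in each $j$-summand of ${}_N\tilde{\phi}_M$ the inner $m$-series and all the $a$- and $b$-prefactors are untouched, and only the factor $\theta_q(-a_jx_0)\theta_q(-a_jx_1)/(\theta_q(-x_0)\theta_q(-x_1))$ changes. Hence the left side of \eqref{tildephi} equals the sum over $j$ of (the $j$-th prefactor) $\times$ (the $j$-th $m$-series) $\times\prod_{l=2}^{N-M}\theta_q(-a_jx_l)/\theta_q(-x_l)$, multiplied by
\[
\frac{\theta_q(-a_jx_0)\theta_q(-a_jx_1)}{\theta_q(-x_0)\theta_q(-x_1)}-\frac{\theta_q(-a_jx_0y^{-1})\theta_q(-a_jx_1y)}{\theta_q(-x_0y^{-1})\theta_q(-x_1y)}=\frac{\theta_q(-a_j)\theta_q(-a_jx_0x_1)\theta_q(-y)\theta_q(-x_1y/x_0)}{\theta_q(-x_0)\theta_q(-x_1)\theta_q(-y/x_0)\theta_q(-x_1y)},
\]
a Weierstrass-type three-term identity for $\theta_q$ (whose $N=1$, $M=0$ instance underlies \eqref{eq: f1 1}). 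To prove this identity I would regard both sides as functions of $y$: using \eqref{theta relation 5} and \eqref{theta relation 6} one checks that each side is invariant under $y\mapsto qy$ and has the same divisor, namely simple poles at $y\in x_0q^{\Zz}\cup x_1^{-1}q^{\Zz}$ and simple zeros at $y\in q^{\Zz}\cup (x_0/x_1)q^{\Zz}$, so the two sides differ by a multiplicative constant, which equals $1$ by comparing residues at $y=x_0$. Substituting this identity into each summand and collecting the common theta denominators reassembles precisely the right-hand side of \eqref{tildephi}.

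\textbf{Main obstacle.} Parts $(1)$ and $(2)$ are routine bookkeeping with the $q$-Borel transforms already set up. The only genuine work is part $(3)$: one must correctly isolate which factors are invariant under the $y$-shift (this rests entirely on $x_0x_1$ being preserved) and, above all, establish and normalize the three-term theta identity, where the signs and arguments of $\theta_q$ together with the residue computation at $y=x_0$ require care. That is the step I expect to be the crux.
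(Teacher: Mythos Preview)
Your proposal is correct and follows essentially the same approach as the paper: parts $(1)$ and $(2)$ are read off from the definition and the $q$-Borel summable construction \eqref{eq: tphi qB}, and part $(3)$ is obtained term by term from the same three-term theta identity you state. The only difference is that the paper simply cites this identity as a ``Riemann's relation of the theta function'' from \cite{M}, whereas you sketch a proof via divisors and a residue comparison at $y=x_0$; your sketch is fine, though the cleanest route is to note both sides are elliptic of degree $2$ in $y$, match the two simple poles with their residues, and conclude the difference is a pole-free elliptic function vanishing at $y=1$.
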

\begin{proof}[{\bf{Proof}}]
From the definition and $q$-Borel summable construction \eqref{eq: tphi qB} of the ${}_N\tilde{\phi}_M$, $(1)$ and $(2)$ are clear. Also, from a Riemann's relation of the theta function (for example, see \cite{M}): 
\begin{align*}
\frac{\theta_q(-ax_0)\theta_q(-ax_1)}{\theta_q(-x_0)\theta_q(-x_1)}-\frac{\theta_q(-ax_0/y)\theta_q(-ax_1y)}{\theta_(-x_0/y)\theta_q(-x_1y)}=\frac{\theta_q(-a)\theta_q(-y)\theta_q(-ax_0x_1)\theta_q(-x_1y/x_0)}{\theta_q(-x_0)\theta_q(x_1)\theta_q(-y/x_0)\theta_q(-x_1y)},
\end{align*}
we obtain the conclusion of $(3)$. 
\end{proof}
\begin{rmk}
Proposition $\ref{prop: appendix 1}$ $(1)$, $(2)$ and $(3)$ are regarded as a generalization of \eqref{eq: fN 4}, \eqref{eq: fN 7} and \eqref{eq: fN 2}, respectively. 
However, we can not prove \eqref{eq: fN 2}, \eqref{eq: fN 4} and \eqref{eq: fN 7} directly from Proposition $\ref{prop: appendix 1}$. 
\end{rmk}

\section*{Acknowledgments}
We are grateful to Professor Yasuhiko Yamada (Kobe University) for his helpful comments of modular properties of the multivariable $\mu$-function. 
This work was supported by JSPS KAKENHI Grant Number 21K13808 and JST CREST Grant Number JP19209317.

\end{document}